\documentclass[aihp]{imsart}

\RequirePackage{amsthm,amsmath,amsfonts,amssymb}
\RequirePackage[numbers]{natbib}
\usepackage{bm}
\usepackage{enumitem}
\usepackage{microtype}
\usepackage[hidelinks]{hyperref}
\usepackage[nameinlink,capitalize]{cleveref}
\usepackage{mathtools}

\startlocaldefs
\newcommand{\R}{\mathbb R}

\newcommand{\E}{\mathbb E}
\newcommand{\Pp}{\mathbb P}
\newcommand{\cE}{\mathcal E}
\newcommand{\cF}{\mathcal F}
\newcommand{\cM}{\mathcal M}
\newcommand{\cD}{\mathcal D}
\newcommand{\cC}{\mathcal C}
\newcommand{\sgn}{\operatorname{sign}}
\newcommand{\Tr}{\operatorname{Tr}}
\newcommand{\supp}{\operatorname{supp}}

\newcommand{\vol}{\operatorname{vol}}
\newcommand{\eps}{\varepsilon}
\newcommand{\ip}[2]{\left\langle #1,#2\right\rangle}
\newcommand{\norm}[1]{\left\lVert #1\right\rVert}
\newcommand{\abs}[1]{\left|#1\right|}
\newcommand{\bone}{\bm{1}}

\theoremstyle{plain}
\newtheorem{theorem}{Theorem}[section]
\newtheorem{proposition}[theorem]{Proposition}
\newtheorem{lemma}[theorem]{Lemma}
\newtheorem{corollary}[theorem]{Corollary}
\theoremstyle{definition}

\newtheorem{assumption}[theorem]{Assumption}
\newtheorem{remark}[theorem]{Remark}

\endlocaldefs

\begin{document}

\begin{frontmatter}

\title{Fast Switching Near a Smooth Interface:
	A Curvature-Universal Kirchhoff Limit}
\runtitle{Curvature-Universal Switching Kirchhoff Limit}

\begin{aug}
\author[A]{\inits{LM}\fnms{Leonardo}~\snm{Marconi}\ead[label=e1]{leonardo.marconi5@unibo.it}}
\orcid{ORCID}
\address[A]{Università di Bologna, Dipartimento di Scienze Statistiche Paolo Fortunati\printead[presep={,\ }]{e1}}

\end{aug}

\begin{abstract}
	We derive a weighted Kirchhoff diffusion on a smooth Riemannian open book
	as a scaling limit of regime-switching Brownian motions on boundaryless
	manifolds.  The pages are compact Riemannian manifolds sharing a common
	boundary manifold and inducing the same metric on the binding, while their
	second fundamental forms and interior geometries may differ.
	
	For each $\varepsilon>0$, we regularize the metric in an $o(\varepsilon)$ collar, pass to smooth doubles of the pages, run Brownian motion on each double, and allow the page label to switch in a collar of width $\varepsilon$ at rates
	\(q_\varepsilon(r)Q_{ij},\)
	where $Q$ is an arbitrary irreducible finite-state Markov generator with invariant law $\pi$, while \(q_\eps\) is bounded and supported on an \(\eps\) collar.
	
	Denoting \(\Theta_\eps\) the total mass of \(q_\eps\) and \(\Xi_\eps\) its second raw moment,
	we prove that, if
	\(\Theta_\varepsilon\to\infty\) and \(\Xi_\varepsilon\to0,	\)
	then the glued processes converge, from every deterministic sequence of starting points, to Brownian motion with weigthed Kirchhoff interface conditions.
	No reversibility of $Q$, pointwise scaling ansatz for $q_\varepsilon$, or first-moment condition is required. Furthermore, we show that the second fundamental form of the pages contributes no additional interface term: curvature remains only through the bulk Laplace--Beltrami operators.
\end{abstract}


\begin{keyword}[class=MSC]
\kwdgroup[type=primary]{\kwd{60F17}
\kwd{60J60}}
\kwdgroup[type=secondary]{\kwd{60J65}\kwd{58J65}\kwd{35R02}}
\end{keyword}

\begin{keyword}
\kwd{Riemannian open book}
\kwd{Kirchhoff diffusion}
\kwd{scaling limit}
\kwd{regime-switching diffusion}
\kwd{interface diffusion}
\kwd{Brownian motion on stratified spaces}
\end{keyword}

\end{frontmatter}
\section{Introduction}
\label{sec:introduction}

Diffusion on a space with branching or interfaces is not determined solely by its
behavior away from the singular set.  One must also specify how the process
crosses, reflects from, or redistributes itself when it reaches the singularity.
On a collection of rays meeting at a point, the classical example is Walsh
Brownian motion, whose radial component is reflecting Brownian motion and whose
excursions are distributed among the rays according to prescribed weights
\cite{Walsh1978,BarlowPitmanYor1989}.  More generally, diffusion processes on
metric graphs are governed by second-order operators on the edges together with
conditions at the vertices, and they arise naturally in averaging problems,
small-parameter limits and diffusion through thin structures
\cite{FreidlinWentzell1993,FreidlinSheu2000,KostrykinPotthoffSchrader2012}.

The higher-dimensional singular geometry considered here is an open book.  Let
$(M_i,g_i)$, $i=1,\ldots,k$, be compact smooth $n$-dimensional Riemannian
manifolds with smooth boundary, and identify every $\partial M_i$ with the same
compact $(n-1)$-dimensional manifold $\Gamma$.  We assume that the metrics
induced on the binding agree
and form the quotient
\[
\cM
=
\left(\bigsqcup_{i=1}^k M_i\right)\Big/\sim,
\qquad
x\in\partial M_i\sim x\in\partial M_j.
\]
Thus $\cM$ is smooth away from $\Gamma$ and branched along the common binding.
The pages carry the
same tangential metric at $\Gamma$, but their first normal jets may differ.
In particular, the pages in
our model may have different second fundamental forms, different mean
curvatures and different interior geometries.

Diffusions on branched spaces with gluing conditions at the singular set have a
substantial history.  Freidlin and Wentzell introduced open-book diffusions in
the context of averaging and formulated the limiting process through pagewise
differential operators together with gluing conditions at the binding
\cite{FreidlinWentzell2004,FreidlinWentzell2012}.  Their open-book
model in \cite{FreidlinWentzell2004} has a product structure inherited from a
graph-type slow variable, while the discussion in
\cite[Chapter~9]{FreidlinWentzell2012} points out the additional analytic
problem created by a general open book: one needs sufficiently regular solutions
of elliptic or parabolic equations on the singular space in order to identify a
Markov process by its generator. More recently, Akduman and Kuchment
\cite{AkdumanKuchment2024} developed an operator-theoretic framework for
Laplace-type operators on compact Riemannian open books with smooth pages and
general junction conditions. Related intrinsic constructions have been
developed for Euclidean and Riemannian complexes
\cite{BrinKifer2001,Bouziane2005}, for simple stratified spaces
\cite{NyeWhite2014}, for Riemannian polyhedral complexes
\cite{PivarskiSaloffCoste2008}, and for strip complexes
\cite{BendikovSaloffCosteSalvatoriWoess2011}.

Our question is instead a selection problem: \emph{which interface law is
	produced by a microscopic dynamics concentrated near the binding?}
This places the paper next to several singular-limit mechanisms in probability.
A first class of results obtains singular diffusions through a degeneration of
the underlying state space.  This includes the narrow-tube example of
Freidlin and Wentzell \cite{FreidlinWentzell1993}, as well as subsequent
limits of diffusions or Laplacians on shrinking tubular and graph-like
domains \cite{AlbeverioKusuoka2012,ExnerPost2005}.  In higher dimension, Corbin
\cite{Corbin2020} proves spectral convergence from uniformly fattened
open-book structures to an operator with Kirchhoff coupling on the limiting
open book.

A different mechanism produces a singular state space by eliminating fast
degrees of freedom.  Freidlin and Wentzell
\cite{FreidlinWentzell1993} already treat examples of this type for processes
with a fast discrete component and for randomly perturbed Hamiltonian
systems, where the limiting graph is a quotient by the fast motion.
Their higher-dimensional open-book result
\cite{FreidlinWentzell2004} belongs to this averaging paradigm: the open book
is obtained by identifying connected components of common level sets of the
first integrals, and the projected slow motion converges weakly to a diffusion
on that quotient.


A separate line of work concerns homogenization across interfaces for
continuous diffusions.  Hairer and Manson
\cite{HairerManson2010,HairerManson2011} study diffusions whose coefficients
are periodic away from an interface region and interpolate between the two
media near that interface.  Under diffusive rescaling, the bulk dynamics
homogenize while the shrinking transition region leaves a singular
contribution to the limiting process, expressed through skewness or an
interface local-time drift.

The mechanism considered here is different.  The continuous motion on each
page is Brownian and a finite-state page
label jumps at bounded rates only inside a shrinking collar of the
binding.  The singular limit is therefore driven by localized fast switching; the invariant law of
the switching chain survives in the weighted Kirchhoff condition.


For every $\eps>0$ our microscopic state space contains no glued interface.  We
first modify $g_i$ only in an auxiliary collar of width $\rho_\eps$, where
$\rho_\eps=o(\eps)$, so that the metric is exactly product near $\Gamma$.  This
allows us to form a smooth boundaryless double $(\widetilde M_i,\widetilde
g_i^\eps)$ of each page.  On the disjoint union
\[
\widetilde E=\bigsqcup_{i=1}^k\widetilde M_i
\]
we run ordinary Brownian motion on the current double and change the page label
only inside the collar $|s|\le\eps$, at the rates
\(
\lambda_{ij}^\eps(y,s)
=q_\eps(|s|)Q_{ij},
\ i\neq j.
\)
Here $Q$ is an arbitrary irreducible generator on $\{1,\ldots,k\}$ and
$\pi=(\pi_1,\ldots,\pi_k)$ is its invariant probability vector.  No detailed
balance assumption is imposed.  For each fixed $\eps$, the function $q_\eps$ is
bounded, so the microscopic process can be constructed directly by Poisson
thinning.  It is conservative, càdlàg, Feller and strong Markov, and only
finitely many page switches occur on compact time intervals.  Folding the
doubles back onto the original pages produces reflected Brownian motion between
switching times.

The candidate limit is the weighted Kirchhoff diffusion on $\cM$.  Its generator
acts in the interior of page $i$ as
\[
Af=\frac12\Delta_{g_i}f_i,
\]
and its smooth domain is characterized at the binding by
\(
f_1|_\Gamma=\cdots=f_k|_\Gamma\) and \(
\sum_{i=1}^k\pi_i\,\partial_{\nu_i}f_i=0.
\)
Although related Kirchhoff diffusions are known in several singular settings,
we construct this target process in the precise Riemannian geometry needed for
the convergence argument.  There are two reasons for doing so.  First, our perturbed-test-function argument requires a smooth core of the
limiting generator on which the normal derivatives at the binding, the
Kirchhoff condition, and the pagewise Laplacians are all defined.
This is needed to construct the boundary-layer correctors and to compare the
microscopic generators with the limiting one pointwise.
Second, identification of subsequential limits requires a well-posed martingale
problem for exactly that core.

Only two integrated quantities from the switching profile enter the limiting
regime:
\[
\Theta_\eps
:=\int_0^\eps q_\eps(r)\,dr,
\qquad
\Xi_\eps
:=\int_0^\eps r^2q_\eps(r)\,dr.
\]
Our assumptions are
\[
\Theta_\eps\longrightarrow\infty,
\qquad
\Xi_\eps\longrightarrow0.
\]
The first quantity is the total switching mass seen on the normal local-time
scale.  Its divergence forces the rapidly switching page label to equilibrate on the interface scale, thereby forcing equilibration according to the invariant law $\pi$. The second quantity measures the
second-order spatial effect of switching at distance $r$ from the binding. 

The introduction of a boundary-layer corrector allows us to bridge the mismatch between the microscopic Neumann domain and the
limiting Kirchhoff domain, to offset the effect of the first moment \(\int_0^\eps r q_\eps(r)\,dr\) in the limit, and thus to not require a separate assumption, and to show that the extinsic curvature is 
asymptotically negligible in the interface law.

Our main theorem states that, if
$\Theta_\eps\to\infty$ and $\Xi_\eps\to0$, then the glued microscopic
observations converge in path space, from every deterministic sequence of
starting points whose images converge in $\cM$, to the weighted Kirchhoff
diffusion.  This includes starting points approaching the binding and arbitrary
microscopic page labels; if the limiting initial point lies on $\Gamma$, the
limiting law is independent of those labels.  In this sense both continuity
through the binding and the weighted flux balance are emergent: neither appears
in the microscopic state space or in the microscopic generator.

The contributions can therefore be summarized as follows.  We derive a
weighted Kirchhoff law from bounded-rate, boundaryless microscopic
jump-diffusions on arbitrary smooth Riemannian pages; the page chain need only be
irreducible, not reversible; the scaling regime is expressed by the two moments
$\Theta_\eps$ and $\Xi_\eps$, with no first-moment condition; convergence holds
from deterministic initial conditions, including at the binding; and the
resulting interface law is universal under changes of the second fundamental
forms of the pages that preserve the common induced metric on $\Gamma$.

The rest of the paper is organized as follows.  Section~\ref{sec:geometry}
introduces the open-book geometry and Fermi collars.  Section \ref{sec:target} construct the
target Kirchhoff diffusion and its smooth core. Section \ref{sec:approximants} followed by the boundaryless
microscopic processes and their folded observations.  Section \ref{sec:tightness} is concerned with tightness, while in \Cref{sec:corrector}
we introduce the non-symmetric boundary-layer corrector Finally \Cref{sec:main-limit} proves the final convergence theorem and records
the curvature-universality consequence.

The baseline model treated here changes only the page label:
\(
(i,y,r)\mapsto(j,y,r).
\)
A natural next question is to replace this by microscopic landing maps
\[
(i,y,r)
\longmapsto
\left(
j,\exp_y^\Gamma\Delta_{ij}^{\parallel,\eps},
r+\Delta_{ij}^{\perp,\eps}
\right).
\]
One can expect additional surface drift, surface diffusion, skew flux or Wentzell-type term in the landing-map theory can therefore to be generated by this additional the landing mechanism.

\subsection{Notation}
\begin{itemize}
	\item For a manifold with boundary \(M\), the notation
	\(C^m(M)\) means \(C^m\) up to the boundary.
	\item The space \(D([0,T],M)\) denotes the space of càdlàg \(M\)-valued
	paths endowed with the Skorokhod \(J_1\) topology, and
	\(C([0,T],M)\) denotes its subspace of continuous paths. A family of
	probability measures on \(D([0,T],M)\) is said to be
	\emph{\(C\)-tight} if it is tight and every weak subsequential limit
	is concentrated on \(C([0,T],M)\). We write
	\(\Longrightarrow\) for weak convergence of probability measures on
	the path space under consideration.
	\item For a process \(X\) started from \(x\), we write
	\(\mathbb P_x\) and \(\mathbb E_x\) for its law and expectation,
	respectively, and \(\mathcal L_x(X)\) for its law as a path-valued
	random variable.
	\item The letter \(C\) denotes a finite positive constant whose value may
	change from line to line. Unless explicitly indicated otherwise,
	such constants are independent of \(\varepsilon\), of the page index
	\(i\), and of the initial point. A subscript, as in \(C_T\), indicates
	permitted dependence on the corresponding parameter.
\end{itemize}

\section{The one-interface geometry}
\label{sec:geometry}

\subsection{Pages and the glued space}

Fix integers $n\ge2$ and $k\ge2$.  For $i=1,\dots,k$, let $(M_i,g_i)$ be a compact connected smooth $n$-dimensional Riemannian manifold with smooth boundary.  Assume that every boundary is identified with the same compact connected smooth $(n-1)$-manifold $\Gamma$ through fixed diffeomorphisms, and use these identifications to write
\(
\partial M_i=\Gamma.
\)
We assume that the induced boundary metrics coincide, that is
\(
g_i|_{T\Gamma}=h\), \( i=1,\dots,k,
\)
for one Riemannian metric $h$ on $\Gamma$.


The glued open-book space is
\[
\cM
:=
\left(\bigsqcup_{i=1}^k M_i\right)\Big/\sim,
\qquad
x\in\partial M_i\sim x\in\partial M_j.
\]
We denote the common image of the boundaries again by $\Gamma$.  The quotient is equipped with the intrinsic path metric obtained from the page metrics.  In particular, if two points lie at the same Fermi footpoint $y\in\Gamma$ and at normal distances $r,s$ on distinct pages, then the path through the binding gives
\(
d_{\cM}\big((i,y,r),(j,y,s)\big)\le r+s.
\)

We shall also use the separated page space
\[
E^\sharp:=\bigsqcup_{i=1}^k M_i
\]
and the quotient map
\[
\mathfrak q:E^\sharp\longrightarrow\cM.
\]
Functions on $\cM$ are identified with tuples $f=(f_1,\dots,f_k)$ whose traces agree on $\Gamma$.

\subsection{Fermi collars}

By the tubular-neighborhood theorem, for each page \(M_i\) there exists
\(r_i>0\) such that the inward normal exponential map
\[
\Phi_i:\Gamma\times[0,r_i)\longrightarrow M_i,
\qquad
\Phi_i(y,r)=\exp_y^{M_i}\!\bigl(r\nu_i(y)\bigr),
\]
is a diffeomorphism onto a neighborhood of \(\Gamma\) in \(M_i\), where
\(\nu_i\) denotes the inward unit normal along \(\Gamma\).
Since \(\Gamma\) is compact and the number of incident pages is finite, after
decreasing the radii if necessary we may choose a common \(r_0>0\).
We refer to the image of
\(\Phi_i:\Gamma\times[0,r_0)\to M_i\) as a \emph{Fermi collar} of
\(\Gamma\) in \(M_i\), and to \((y,r)\) as the corresponding Fermi
coordinates.

By the Gauss lemma, the metric in these coordinates has the form
\begin{equation}
	\Phi_i^\ast g_i
	=dr^2+h_i(y,r),
	\qquad
	h_i(y,0)=h(y),
	\label{eq:fermi-metric}
\end{equation}
where \(h_i(\cdot,r)\) is the metric induced on the hypersurface at
normal distance \(r\) from \(\Gamma\), and \(h\) is the common metric
induced on \(\Gamma\). In the following, with a slight abuse of notation, we will denote the pullback metric \(\Phi_i^\ast\) by \(g_i\).
In particular, its first normal derivative at \(r=0\) is, up to the
chosen sign convention, twice the second fundamental form of
\(\Gamma\subset M_i\).

For a symmetric \((0,2)\)-tensor \(T\), we write, in Einstein notation, \(|T|_h^2:=h^{ac}h^{bd}T_{ab}T_{cd}\) and \(\norm T_{C^0(\Gamma,h)}:=\sup_{y \in \Gamma}|T(y)|_h\). Smoothness, compactness and the finite number of pages \(M_i\) imply the uniform expansion
\begin{equation}
	\norm{h_i(\cdot,r)-h}_{C^0(\Gamma)}\le Cr, \quad
	\sup_{y\in\Gamma,\;0\le r\le r_0}
	|\partial_rh_i(y,r)|_h
	\le C .
	\qquad 0\le r\le r_0,
	\label{eq:metric-first-order}
\end{equation}
with a constant independent of $i$.

Denoting by \(d\sigma_h\) the volume element of the boundary metric \(h\), we may write the Riemannian volume element in the collar as
\begin{equation}
	d\vol_{g_i}=J_i(y,r)\,dr\,d\sigma_h(y),
	\qquad
	J_i(y,r):=\left(\frac{\det h_i(y,r)}{\det h(y)}\right)^{1/2}.
	\label{eq:Jacobian}
\end{equation}
Then $J_i(y,0)=1$, and smoothness gives
\begin{equation}
	J_i(y,r)=1+\kappa_i(y)r+O(r^2),
	\qquad
	\kappa_i(y):=\partial_r\log J_i(y,0).
	\label{eq:Jacobian-expansion}
\end{equation}
Up to the sign convention for the chosen inward normal, $\kappa_i$ is the trace of the second fundamental form of $\Gamma$ as the boundary of page $i$.  Thus the second fundamental form enters the Fermi metric at first order in $r$, while its trace enters the volume density through \eqref{eq:Jacobian-expansion}.

\section{The target Kirchhoff diffusion}
\label{sec:target}

Diffusions on branched spaces governed by differential operators on the
individual pages and gluing conditions at the binding are classical, as discussed in the Introduction.

The most closely related analytic constructions include the following. For Riemannian
polyhedral complexes, Pivarski and Saloff-Coste
\cite[Proposition~1.21 and Theorem~3.10]{PivarskiSaloffCoste2008}
identify the Kirchhoff condition on codimension-one faces and, under
their geometric assumptions, obtain a Feller heat semigroup on
\(C_0\).  Weighted Dirichlet forms, Kirchhoff-type conditions, and
regularity of weak solutions are developed for strip complexes in
\cite{BendikovSaloffCosteSalvatoriWoess2011}.  More recently, Akduman
and Kuchment \cite{AkdumanKuchment2024} developed an
operator-theoretic framework for compact Riemannian open books with
smooth pages, treating pagewise Laplace--Beltrami operators subject to
general local junction conditions at the bindings.

These works provide closely related realizations of differential
operators on branched spaces, but the convergence argument below
requires a particular \(C(\cM)\)-realization of the weighted Kirchhoff
diffusion.  In particular, we need a smooth core on which the normal
derivatives, the pagewise Laplacians, and the transmission condition are
defined classically, together with a well-posed martingale problem from
every starting point.  We therefore record the precise realization
associated with the weights \(\pi_1,\ldots,\pi_k\).

Define the finite measure \(\mu\) on \(\cM\) by
\begin{equation*}
	\int_{\cM} f\,d\mu
	:=
	\sum_{i=1}^k
	\pi_i\int_{M_i}f_i\,d\vol_{g_i}.
	\label{eq:target-measure}
\end{equation*}
There is no ambiguity at the binding, which has zero pagewise volume.
Let
\begin{align}
	\cF_K
	&:=
	\left\{
	f=(f_i):
	f_i\in H^1(M_i),\
	\Tr f_1=\cdots=\Tr f_k\text{ on }\Gamma
	\right\},
	\notag\\
	\cE_K(f,\varphi)
	&:=
	\frac12\sum_{i=1}^k
	\pi_i\int_{M_i}
	\ip{\nabla^{g_i}f_i}{\nabla^{g_i}\varphi_i}_{g_i}
	\,d\vol_{g_i}.
	\label{eq:EK}
\end{align}
The trace maps \(H^1(M_i)\to H^{1/2}(\Gamma)\) are continuous, so
\(\cF_K\) is a closed subspace of
\(\bigoplus_i H^1(M_i)\).  Since the weights \(\pi_i\) are strictly
positive, \(\cE_K+\langle\cdot,\cdot\rangle_{L^2(\mu)}\) defines an
equivalent Hilbert norm on \(\cF_K\).  Moreover, normal contractions
preserve the common-trace condition and decrease the energy; thus
\((\cE_K,\cF_K)\) is the natural weighted Dirichlet form associated
with the open book.

Let \(\nu_i\) denote the inward unit normal to \(\Gamma\) in \(M_i\).
For smooth \(u\), pagewise Green's formula shows the origin of the
weights:
\begin{align*}
	\cE_K(u,\varphi)
	&=
	-\frac12\sum_{i=1}^k\pi_i
	\int_{M_i}(\Delta_{g_i}u_i)\varphi_i\,d\vol_{g_i}
	-\frac12\int_\Gamma
	\left(
	\sum_{i=1}^k\pi_i\partial_{\nu_i}u_i
	\right)
	\Tr\varphi\,d\sigma_h ,
	\label{eq:green-kirchhoff}
\end{align*}
where the minus sign in the boundary term is due to our convention that
\(\nu_i\) points inward.  Thus the natural flux condition associated
with \eqref{eq:EK} is
\(\sum_{i=1}^k\pi_i\partial_{\nu_i}u_i=0\) on \(\Gamma.\)

Let
\begin{equation*}
	\mathcal G
	:=
	\left\{
	G\in C(\cM):
	G_i\in C^\infty(M_i)\text{ for every }i
	\right\}.
	\label{eq:Gclass}
\end{equation*}
For the \(C(\cM)\)-realization, define
\begin{equation}
	\cC_K
	:=
	\left\{
	\begin{array}{l}
		f\in \mathcal G, \
		\displaystyle
		\sum_{i=1}^k\pi_i\partial_{\nu_i}f_i=0
		\text{ on }\Gamma, \
		\displaystyle
		\Delta_{g_1}f_1|_\Gamma
		=
		\cdots
		=
		\Delta_{g_k}f_k|_\Gamma
	\end{array}
	\right\}.
	\label{eq:CK}
\end{equation}
The two conditions at the binding have different roles.  The first is
the transmission condition.  The second is not an additional
transmission law: it guarantees that the pagewise expression
\(\frac12\Delta_{g_i}f_i\) extends to a continuous function on the
quotient.  This is the analogue, in the present setting, of the common
limiting value of the pagewise generator required in the domains used
by Freidlin and Wentzell~\cite[Chapter~9, Section~3]
{FreidlinWentzell2012}.

For \(f\in\cC_K\), set
\begin{equation*}
	(A_0f)|_{M_i^\circ}
	:=
	\frac12\Delta_{g_i}f_i
	\label{eq:A0}
\end{equation*}
and define \(A_0f\) on \(\Gamma\) by the common boundary value in
\eqref{eq:CK}. 

\begin{lemma}
	\label{lem:target-density-pmp}
	The space $\cC_K$ is uniformly dense in $C(\cM)$.  Moreover $A_0$ satisfies the positive maximum principle and $A_0\bone=0$.
\end{lemma}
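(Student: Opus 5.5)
Density will follow from the Stone--Weierstrass theorem applied to a subalgebra contained in \(\cC_K\); \(A_0\bone=0\) is immediate; and the positive maximum principle is checked by splitting into interior and binding maxima.

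\textbf{Density.} The space \(\cC_K\) is not an algebra, because the Kirchhoff and Laplacian-matching conditions are not preserved under products. I would therefore work with the subalgebra \(\mathcal A\subset\cC_K\) of functions in \(\mathcal G\) that are constant in the Fermi normal variable near the binding, uniformly over pages. Precisely, \(\mathcal A\) consists of those \(f\) for which there are \(\delta\in(0,r_0)\) and \(\phi\in C^\infty(\Gamma)\) with \(f_i(\Phi_i(y,r))=\phi(y)\) for \(r<\delta\) and every \(i\).

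First, \(\mathcal A\subset\cC_K\). For \(f\in\mathcal A\) we have \(\partial_{\nu_i}f_i=\partial_r f_i=0\) on \(\Gamma\), so the flux condition holds trivially. In Fermi coordinates
\[
\Delta_{g_i}=\partial_r^2+\partial_r\log J_i\,\partial_r+\Delta_{h_i(\cdot,r)}.
\]
At \(r=0\), for \(f\) independent of \(r\), this gives \(\Delta_{g_i}f_i|_\Gamma=\Delta_h\phi\), since \(h_i(\cdot,0)=h\). The right-hand side is the same for every page, so the second condition holds. This is the only place where the common induced metric is used.

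Second, \(\mathcal A\) is an algebra containing constants. It is closed under products and sums, taking the minimum of the two \(\delta\)'s.

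Third, \(\mathcal A\) separates points of \(\cM\), which I would check by cases.
\begin{itemize}
\item Two points in the same page interior are separated by a bump function supported in that interior.
\item A page-interior point and any other point are separated by a bump vanishing near \(\Gamma\) and on the other page.
\item Two distinct points of \(\Gamma\) are separated by \(\phi(y)\chi(r)\), where \(\phi\) separates them on \(\Gamma\) and \(\chi\) is a cutoff equal to \(1\) near \(0\) and supported in \([0,r_0)\), extended by zero.
\item Two points on distinct pages lie in distinct page interiors, since points of \(\Gamma\) are identified across pages; this reduces to the previous cases.
\end{itemize}
Since \(\cM\) is compact Hausdorff, the Stone--Weierstrass theorem gives that \(\mathcal A\), and hence \(\cC_K\), is dense in \(C(\cM)\).

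\textbf{\(A_0\bone=0\).} This is clear: \(\bone\in\cC_K\) and every pagewise Laplacian of a constant vanishes.

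\textbf{Positive maximum principle.} Let \(f\in\cC_K\) attain a nonnegative maximum at \(x_0\).

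If \(x_0\) lies in the interior of some page \(M_i\), then the Hessian of \(f_i\) is negative semidefinite there, so \(\Delta_{g_i}f_i(x_0)\le0\).

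If \(x_0=y_0\in\Gamma\), I would argue as follows.
\begin{itemize}
\item Each \(f_i\) attains its maximum over \(M_i\) at the boundary point \(y_0\), so \(\partial_r f_i(y_0,0)\le0\) for every \(i\).
\item Since \(\sum_i\pi_i\partial_{\nu_i}f_i=0\) and all \(\pi_i>0\), every term vanishes, so \(\partial_r f_i(y_0,0)=0\).
\item The trace \(f|_\Gamma\) has a maximum at \(y_0\), so \(\nabla_h f(y_0)=0\) and \(\Delta_h(f|_\Gamma)(y_0)\le0\).
\item Along the normal direction, \(r\mapsto f_i(y_0,r)\) has zero first derivative at \(r=0\) and a maximum there. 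Hence \(\partial_r^2f_i(y_0,0)\le0\), since a positive value would make \(f_i\) strictly increase for small \(r\).
\item The same Fermi formula at \(r=0\) now gives
\[
\Delta_{g_i}f_i(y_0)=\partial_r^2f_i+\kappa_i\partial_rf_i+\Delta_h(f|_\Gamma)\le0,
\]
because the \(\kappa_i\partial_r f_i\) term vanishes.
\end{itemize}
Thus \(A_0f(y_0)\le0\).

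The main subtle step is the binding case. There, positivity of the weights forces every normal derivative to vanish, which kills the mean-curvature term \(\kappa_i\partial_rf_i\) and lets the one-sided second-derivative argument go through.
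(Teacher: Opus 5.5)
Your proof is correct. For density you take a different route from the paper; for the positive maximum principle your argument is essentially the paper's.

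\textbf{Density.} The paper argues constructively.
\begin{itemize}
\item It first approximates $f\in C(\cM)$ in sup norm by some $u\in\mathcal G$, using cutoffs near $\Gamma$.
\item It then subtracts $r\chi_\gamma(r)S(y)$, with $S=\sum_i\pi_i\partial_{\nu_i}u_i$, to enforce the weighted flux condition.
\item Finally it adds $\tfrac12 r^2\chi_\gamma(r)(\bar d-d_i)$ to equalize the boundary values of $\Delta_{g_i}u_i$. Together the two corrections cost $O(\gamma)$ in sup norm.
\end{itemize}
You instead note that functions constant in the normal variable near $\Gamma$ form a unital, point-separating subalgebra of $\mathcal G$ contained in $\cC_K$, and then apply Stone--Weierstrass.

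Your route is shorter and gives density of $\mathcal G$ for free. It relies on the common induced metric $h_i(\cdot,0)=h$ to make $\Delta_{g_i}f_i|_\Gamma=\Delta_h\phi$ independent of the page. The paper's correction step does not use this. It also yields a slightly stronger fact: any element of $\mathcal G$ can be approximated within $\cC_K$ by functions with the same trace that differ from it only in a $2\gamma$-collar.

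\textbf{Positive maximum principle.} Your argument matches the paper's: interior Hessian, tangential Laplacian at a maximum of the trace, and vanishing normal derivatives forced by the Kirchhoff condition with $\pi_i>0$. At the binding you correctly conclude $\partial_r^2 f_i(y_0,0)\le 0$ from $\partial_r f_i(y_0,0)=0$ and the one-sided maximum at $r=0$. This is exactly what is needed for $\Delta_{g_i}f_i(y_0)\le 0$. The paper's text states $\partial_{rr}f_i\ge 0$ at this step, which is a sign slip; your version fixes it.
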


\begin{proof}
	We first prove density.  Let $f\in C(\cM)$ and $\eta>0$.  Choose a smooth
	$\phi$ on $\Gamma$ with
	\(
	\norm{\phi-f|_\Gamma}_\infty<\eta.
	\)
	Fix a smooth collar cutoff $\chi$ such that \(\chi(r)=1\) for \(0\le r \le r_0/4\), and \(\chi(r)=0\) for \(r \ge r_0/2\). Extending $\phi$
	to every page by $\chi(r)\phi(y)$ and defining \(g_i(y,r):=f_i(y,r)-\chi(r)\phi(y)\) we obtain \(\sup_i \|g_i(y,0)\|_{L^\infty(\Gamma)}\le \eta\). 
	For \(\delta >0\), select a smooth cutoff \(\theta_\delta\) such that \(\theta_\delta(r)=0\) for \(0\le r \le \delta\), and \(\theta_\delta=1\) for \(r \ge 2\delta\). Defining \(g_i^\delta(y,r):=\theta_\delta(r)g_i(y,r)\), we have
	\[\|g_i^\delta-g_i\|_\infty\le \sup_{\substack{y\in\Gamma\\0\le r\le 2\delta}}|g_i(y,r)|\le \sup_{\substack{1\le i\le k, y\in\Gamma\\0\le r\le 2\delta}}|g_i(y,r)-g_i(y,0)|+\eta.\]
	Let us denote the supremum in the right hand side by \(\omega(\delta)\). Because the \(g_i\)'s are continuous up to the boundary, \(\Gamma\) is compact and there are finitely many pages we have \(\omega(\delta)\to 0\) as \(\delta \to 0\). Therefore \(\|g_i^\delta-g_i\|_\infty \le o_\delta(1)+\eta\). 
	
	Now, we can choose smooth approximations \(q_i \in C^\infty(M_i)\) supported outside of the collar \(\{(y,r):0\le r \le \delta/2\}\) such that \(\|q_i-g_i^\delta\|_\infty >\eps\) for all \(\eps >0\). Define \(u_i(y,r):=\chi(r)\phi(y)+q_i(y,r)\). Each \(u_i\) is smooth and, since \(u_1|_\Gamma=\cdots=u_k|_\Gamma=\phi\), they glue into a continuous \(u \in C(\mathcal M)\). Moreover, by the triangle inequality and the finite number of pages
	\(\|u-f\|_{C(\cM)}\le \eps+o_\delta(1)+\eta\) which proves the density of \(\mathcal G\).

	Write \(
	S(y):=\sum_{i=1}^k\pi_i \partial_{v_i}u_i(y,0).
	\)
	Choose a smooth cutoff $\chi_\gamma$ supported in $0\le r<2\gamma$ and equal to one for
	$0\le r\le\gamma$.  The functions
	\(
	u_i^{(1)}(y,r)
	:=
	u_i(y,r)-r\chi_\gamma(r)S(y)
	\)
	have the same trace as $u$ and satisfy
	\(
	\sum_i\pi_i\partial_{\nu_i}u_i^{(1)}=0\text{ on }\Gamma.
	\)
	Moreover, we have $\max_i\|u_i^{(1)}-u_i\|_\infty \le 2\gamma\|S\|_\infty$.
	
	Set
	\(
	d_i(y):=\Delta_{g_i}u_i^{(1)}(y,0), \
	\bar d(y):=\sum_i\pi_i d_i(y),
	\) and \(D^i(y,r):=r^2\chi_\gamma(r)(\bar d(y)-d_i(y))\). Since all derivaties of \(\chi_\gamma\) vanish at \(r=0\) the \(d_i\) remain bounded as \(\gamma \to 0\).
	Define
	\(
	u_i^{(2)}(y,r)
	:=
	u_i^{(1)}(y,r)
	+
	1/2 \, D^i(y,r).
	\)
	Because $\chi_\gamma\equiv1$ near $r=0$, we have
	\(
	\partial_rD^i|_{r=0}=0,
	\)
	while, using
	\begin{equation}
		\label{eq:fermi-Delta}
		\Delta_{g_i}=\partial_{rr}+\kappa_i\partial_r+\Delta_{h_i(r)},
	\end{equation} we obtain
	\(
	\Delta_{g_i}D^i|_{r=0}
	=\bar d-d_i
	\) and consequently \(\Delta_{g_i}u_i^{(2)}|_\Gamma=\Delta_{g_i}u_i^{(1)}|_\Gamma+ \Delta_{g_i}D^i|_\Gamma=\bar d.\)
	Thus this second correction leaves the common trace and the weighted Kirchhoff
	condition unchanged and gives
	\(
	\Delta_{g_i}u_i^{(2)}|_\Gamma=\bar d
	\quad\text{for all }i.
	\)
	Moroever, \(\max_i\|u_i^{(2)}-u_i^{(1)}\|_\infty \le 2\gamma^2\max_i\|\bar d-d_i\|_\infty=O(\gamma^2)\)  Taking $\gamma$ sufficiently small proves that $\cC_K$ is uniformly dense.
	
	For the positive maximum principle, suppose $f\in\cC_K$ attains its global maximum at $z$.  If $z$ lies in a page interior, the usual elliptic maximum principle gives $A_0f(z)\le0$.  If $z=(y,0)\in\Gamma$, the maximum principle along \(\Gamma\) gives, in Fermi coordinates, \(\Delta_{h_i(0)} f_i(y,0)=\Delta_hf_\Gamma(y)\le 0.\) In the normal directions, \(\partial_r f_i(z)\le 0\).
	The weighted Kirchhoff condition and $\pi_i>0$ then imply
	$\partial_{r}f_i(z)=0$ for every $i$. A taylor expansion in \(r\) together with \(\partial_r f_i(z)\le 0\) gives \(\partial_{rr}f_i(z) \ge 0.\)
	Therefore, using \eqref{eq:fermi-Delta}, we obtain
	$\Delta_{g_i}f_i(z)\le0$.  Thus $A_0f(z)\le0$ also at the binding.  Finally, $A_0\bone=0$ is immediate.
\end{proof}

\begin{lemma}
	\label{lem:weak-smooth}
	Let $G\in\mathcal G$, and let $u\in\cF_K$ solve
	\begin{equation}
		\alpha\ip{u}{\varphi}_{L^2(\mu)}+\cE_K(u,\varphi)
		=\ip{G}{\varphi}_{L^2(\mu)},
		\qquad \varphi\in\cF_K.
		\label{eq:target-variational}
	\end{equation}
	Then $u_i\in C^\infty(M_i)$ for every page.  Moreover
	\begin{equation}
		\sum_{i=1}^k\pi_i\partial_{\nu_i}u_i=0
		\quad\text{on }\Gamma.
		\label{eq:weak-kirchhoff}
	\end{equation}
\end{lemma}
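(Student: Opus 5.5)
We split the problem into an interior part and a transmission part at the binding. The transmission part reduces to a standard elliptic boundary problem by a change of unknowns that ``unfolds'' the open book along \(\Gamma\).

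\textbf{Interior regularity.} Testing \eqref{eq:target-variational} with \(\varphi\) supported in the interior of a single page \(M_i\) gives \(\alpha u_i-\frac12\Delta_{g_i}u_i=G_i\) weakly in \(M_i^\circ\), with the factor \(\pi_i>0\) cancelling. Since \(G_i\in C^\infty(M_i)\), interior elliptic regularity yields \(u_i\in C^\infty(M_i^\circ)\). The remaining task is regularity up to \(\Gamma\), where the pages are coupled.

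\textbf{Decoupling near the binding.} Work in the Fermi collars \(\Gamma\times[0,r_0)\), where every page carries coordinates \((y,r)\). Define the average \(w:=\sum_i\pi_i u_i\) and the differences \(v_i:=u_i-u_1\) for \(i\ge2\), all regarded as functions on the common collar. Let \(\psi\) be an arbitrary \(H^1\) function on one collar, supported near \(\Gamma\), and transport it to page \(i\) by \(\psi_i:=\psi\circ\Phi_i^{-1}\).
\begin{itemize}
\item The tuple \((\psi_1,\dots,\psi_k)\) is admissible in \(\cF_K\). Inserting it into \eqref{eq:target-variational} produces a weak Neumann-type identity for the collection \((u_i)\).
\item Tuples that vanish on \(\Gamma\) in all but one page recover the interior equations.
\end{itemize}
The difficulty is that the pages carry different metrics \(g_i=dr^2+h_i(y,r)\). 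Consequently \(w\) does not satisfy a single scalar equation, but a weakly coupled system. For this reason I would treat the \(k\)-tuple \(U=(u_1,\dots,u_k)\) on the single manifold \(\Gamma\times[0,r_0)\) as a vector unknown satisfying the diagonal system \(\alpha u_i-\frac12\Delta_{g_i}u_i=G_i\). Its boundary conditions are:
\begin{itemize}
\item \(k-1\) Dirichlet conditions \(u_i-u_1=0\);
\item one natural Neumann condition \(\sum_i\pi_i\partial_r u_i=0\), which arises because \(\cF_K\) restricts only the traces.
\end{itemize}
This is a strongly elliptic system with mixed Dirichlet/Neumann boundary conditions. The complementing (Lopatinskii--Shapiro) condition should hold, because the principal part is the identity times \(\partial_r^2+\Delta_h\) at \(r=0\) (the metrics agree there). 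The boundary operator \((u_i-u_1)_{i\ge2},\ \sum_i\pi_i\partial_ru_i\) is the classical Kirchhoff condition, which is known to be complementing precisely when \(\pi_i>0\). Agmon--Douglis--Nirenberg regularity, or more elementarily Nirenberg's tangential difference quotients, then bootstraps \(U\in H^1\) to \(H^m\) near \(\Gamma\) for every \(m\), and Sobolev embedding gives \(C^\infty\) up to \(\Gamma\).

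\textbf{Main obstacle and the elementary route.} The hardest step is running this bootstrap rigorously. I would avoid quoting ADN and argue directly:
\begin{enumerate}
\item Tangential difference quotients \(\delta_h^{y}u\), applied simultaneously on all pages with the same footpoint shift, preserve \(\cF_K\). The energy estimate then gives \(\partial_y U\in H^1\), with commutator terms controlled by the smoothness of \(h_i\).
\item Normal derivatives follow from the equation, via \(\partial_r^2u_i=2(\alpha u_i-G_i)-\kappa_i\partial_ru_i-\Delta_{h_i(r)}u_i\), by induction on the order.
\end{enumerate}

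\textbf{Kirchhoff condition.} Once \(u_i\) is smooth, we are entitled to apply Green's formula from the section above with \(\varphi\in\cF_K\). Combining it with the pagewise equation gives \(\int_\Gamma(\sum_i\pi_i\partial_{\nu_i}u_i)\,\Tr\varphi\,d\sigma_h=0\). Traces of elements of \(\cF_K\) include all of \(C^\infty(\Gamma)\), for instance \(\chi(r)\phi(y)\) on every page. Therefore \(\sum_i\pi_i\partial_{\nu_i}u_i=0\) on \(\Gamma\), which is \eqref{eq:weak-kirchhoff}.
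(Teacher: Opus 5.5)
Your elementary route is the paper's proof. It applies tangential difference quotients with the same shift and the same cutoff on every page, which is admissible in $\cF_K$ because the traces stay common, and iterates them to all tangential orders. It then recovers $\partial_r^2u_i$ from $\partial_r^2u_i=2(\alpha u_i-G_i)-\kappa_i\partial_ru_i-\Delta_{h_i(r)}u_i$ by induction on the number of normal derivatives, and finishes with Green's formula against test functions of arbitrary common trace.

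The Agmon--Douglis--Nirenberg detour is unnecessary. If you keep it, note that for the boundary operator $\bigl((u_i-u_1)_{i\ge2},\sum_i\pi_i\partial_ru_i\bigr)$ the complementing condition amounts to $\sum_i\pi_i\neq0$, not to positivity of every $\pi_i$. Positivity of every $\pi_i$ is what the coercivity of $\cE_K$ in the difference-quotient estimate uses.
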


\begin{proof}
	For smooth functions compactly supported in \(M_i^\circ\), \eqref{eq:target-variational} reduces on page \(i\) to the weak uniformly elliptic equation 
	\begin{equation}
		\label{eq:interior-pde}
		\alpha u_i -\frac12 \, \Delta_{g_i} u_i =G_i.
	\end{equation}
	Standard elliptic interior regularity and bootstrapping imply \(u_i \in C^\infty(M_i^\circ)\).  We prove regularity up to the binding directly.  Fix
	$U_0\Subset U_1\Subset\Gamma$ inside one coordinate chart and
	$0<r_1<r_0$. Denote \(Q_0:=U_0\times(0,r_1/2)\) and \(Q_1:=U_1\times(0,r_1)\). Use Fermi coordinates $(y,r)$ on
	$U_1\times[0,r_1)$.  Write
	\(
	d\vol_{g_i}=K_i(y,r)\,dy\,dr,
	\), with \(K_i(y,r):=J_i(y,r)\sqrt{\det h(y)}\),
	and set, with Greek indices ranging over the \((y,r)\) coordinates,
	\(
	B_i^{\alpha\beta}
	:=K_i(g_i^{-1})^{\alpha\beta}.
	\)
	The matrices $B_i$ are smooth, symmetric and uniformly positive definite on the
	fixed half-cylinder.  In particular, for some $0<\lambda\le\Lambda<\infty$,
	\begin{equation}
		\lambda|\xi|^2
		\le
		B_i^{\alpha\beta}(y,r)\xi_\alpha\xi_\beta
		\le
		\Lambda|\xi|^2
		\label{eq:local-uniform-ellipticity}
	\end{equation}
	uniformly in $i$.

	We first treat tangential derivatives. Choose a cutoff
	$\eta\in C_c^\infty(U_1\times[0,r_1))$ which equals one on
	$U_0\times[0,r_1/2]$; the cutoff is the same on every page.  Let $e_a$ be a
	tangential coordinate direction and $h_0>0$ be small enough so that \(\supp \tau_h^a \eta \subset U_1 \times [0,r_1)\) for \(|h|<h_0\). Set \(\tau_h^a \eta(y,r)=\eta(y+he_a,r)\) and
	\[
	w_h:=D_h^a u:=\frac{\tau_h^a u(y,r)-u(y,r)}{h}.
	\]
	Since the traces $\Tr u_i$ agree, the traces $\Tr w_{h,i}$ also agree.  Hence
	\(
	\varphi_i:=-D_{-h}^a(\eta^2w_{h,i})
	\)
	has a common trace and is an admissible element of $\cF_K$.
	
	For compactly supported functions in the $y$-chart the discrete integration by
	parts identity is
	\begin{equation}
		\int F\,D_{-h}^a H
		=-\int (D_h^aF)\,H.
		\label{eq:discrete-IBP}
	\end{equation}
	The discrete Leibniz rule gives
	\begin{align}
		D_h^a(B_i^{\alpha\beta}\partial_\beta u_i)
		&=
		(\tau_hB_i^{\alpha\beta})\partial_\beta w_{h,i}
		+(D_h^aB_i^{\alpha\beta})\partial_\beta u_i,
		\label{eq:DQ-product-B}\\
		D_h^a(K_i u_i)
		&=
		(\tau_hK_i)w_{h,i}+(D_h^aK_i)u_i,
		\label{eq:DQ-product-J}\\
		D_h^a(K_iG_i)
		&=
		(\tau_hK_i)D_h^aG_i+(D_h^aK_i)G_i.
		\label{eq:DQ-product-G}
	\end{align}
	Substituting \(\varphi_i\) in \eqref{eq:target-variational} and applying \eqref{eq:discrete-IBP} therefore yields
	\begin{align}
		&\frac12\sum_i\pi_i\int
		(\tau_hB_i^{\alpha\beta})
		\partial_\beta w_{h,i}\,
		\partial_\alpha(\eta^2w_{h,i})
		+\alpha\sum_i\pi_i\int
		(\tau_hK_i)\eta^2|w_{h,i}|^2=
		\sum_i\pi_i\int D_h^a(K_iG_i)\,\eta^2w_{h,i}
		\notag\\
		&\qquad\quad-\alpha\sum_i\pi_i\int
		(D_h^aK_i)u_i\,\eta^2w_{h,i}
		-\frac12\sum_i\pi_i\int
		(D_h^aB_i^{\alpha\beta})\partial_\beta u_i\,
		\partial_\alpha(\eta^2w_{h,i}).
		\label{eq:DQ-identity}
	\end{align}
	All integrals in this step are over \(Q_1\). We name terms in \eqref{eq:DQ-identity} respectively \(S_1+S_2=S_3-S_4-S_5\) and estimate them.
	
	\(S_1\) equals
	\[
	\frac12\sum_i\pi_i\int
	\eta^2(\tau_hB_i^{\alpha\beta})
	\partial_\beta w_{h,i}\partial_\alpha w_{h,i}
	+\sum_i\pi_i\int
	\eta w_{h,i}(\tau_hB_i^{\alpha\beta})
	\partial_\beta w_{h,i}\partial_\alpha\eta=:S_1^{a}+S_1^{b}.
	\]
	We estimate these two terms separately.  Since the ellipticity bound
	\eqref{eq:local-uniform-ellipticity} also holds for $\tau_hB_i$, writing
	$\pi_*:=\min_i\pi_i>0$,
	\(
	S_1^a
	\ge
	c
	\sum_i\|\eta\nabla w_{h,i}\|_{L^2}^2,\) with \(c:=\frac{\lambda\pi_*}{2}.
	\) Since $B_i$ and $\nabla\eta$ are uniformly
	bounded and the number of pages is finite, Young's inequality with parameter $c/(8C)$ gives
	\begin{align}
		\left|S_1^b
		\right|	&\le
		C\sum_i
		\|\eta\nabla w_{h,i}\|_{L^2}\|w_{h,i}\|_{L^2}
		\le
		\frac{c}{8}\sum_i\|\eta\nabla w_{h,i}\|_{L^2}^2
		+C\sum_i\|w_{h,i}\|_{L^2}^2.
		\label{eq:DQ-cutoff-error}
	\end{align}
	
	We next estimate \(S_5\).  The uniform
	$C^1$ bound on $B_i$ implies, by the fundamental theorem of calculus,
	\[
	D_h^aB_i^{\alpha\beta}(y,r)
	=
	\int_0^1
	\partial_{y_a}B_i^{\alpha\beta}(y+the_a,r)\,dt,
	\]
	and hence
	\(
	\sup_{|h|<h_0}
	\|D_h^aB_i\|_{L^\infty}
	\le C
	\)
	uniformly in $i$.  Expanding
	\(
	\partial_\alpha(\eta^2w_{h,i})
	=
	\eta^2\partial_\alpha w_{h,i}
	+2\eta(\partial_\alpha\eta)w_{h,i}
	\), using $0\le\eta\le1$ and the boundedness of $\nabla\eta$, and applying Young inequality twice,
	we obtain
	\begin{align}
		|S_5|&\le
		C\sum_i
		\|\nabla u_i\|_{L^2}
		\|\eta\nabla w_{h,i}\|_{L^2}
		+
		C\sum_i
		\|\nabla u_i\|_{L^2}\|w_{h,i}\|_{L^2}
		\notag \\&\le
		\frac{c}{8}\sum_i\|\eta\nabla w_{h,i}\|_{L^2}^2
		+
		C\sum_i\left(
		\|u_i\|_{H^1}^2+\|w_{h,i}\|_{L^2}^2
		\right).
		\label{eq:DQ-B-error}
	\end{align}
	
	Similarly, the uniform $C^1$ bound on $K_i$ gives
	\(
	\sup_{|h|<h_0}\|D_h^aK_i\|_{L^\infty}\le C.
	\)
	Consequently,
	\begin{align}
		\left|
		S_4
		\right|
		&\le
		C\sum_i\|u_i\|_{L^2}\|w_{h,i}\|_{L^2}
		\le
		C\sum_i\left(
		\|u_i\|_{L^2}^2+\|w_{h,i}\|_{L^2}^2
		\right).
		\label{eq:DQ-J-error}
	\end{align}

	
	Since $J_iG_i\in H^1$ on \(U_1 \times [0,r_1)\) and\(\tau_h^a\) respects this domain, we have
	\(
	\|D_h^a(J_iG_i)\|_{L^2}
	\le
	\|\partial_{y_a}(J_iG_i)\|_{L^2}.
	\)
	Using the product rule and the uniform $C^1$ bounds on $J_i$,
	\(
	\|\partial_{y_a}(J_iG_i)\|_{L^2}
	\le
	C(
	\|G_i\|_{L^2}
	+\|\partial_{y_a}G_i\|_{L^2})
	\le
	C\|G_i\|_{H^1}.
	\)
	Hence
	\(
	\sup_{|h|<h_0}
	\|D_h^a(J_iG_i)\|_{L^2}
	\le
	C\|G_i\|_{H^1}.
	\)
	Therefore, by Cauchy--Schwarz, $0\le\eta\le1$ and Young's inequality
	\begin{align}
		\left|S_3
		\right|
		&\le
		C\sum_i
		\|D_h^a(J_iG_i)\|_{L^2}
		\|w_{h,i}\|_{L^2}\le
		C\sum_i
		\|G_i\|_{H^1}
		\|w_{h,i}\|_{L^2}\le C\sum_i\left(
		\|G_i\|_{H^1}^2
		+\|w_{h,i}\|_{L^2}^2
		\right).
		\label{eq:DQ-forcing-error}
	\end{align}
	Finally, \(S_2\)
	is nonnegative and may therefore be discarded when deriving a lower bound.
	Combining all estimates yields
	\begin{equation}
		\sum_i\|\nabla D_h^a u_i\|_{L^2(Q_0)}^2 \le \sum_i\|\eta\nabla D_h^a u_i\|_{L^2}^2
		\le
		C\sum_i\left(
		\|u_i\|_{H^1(Q_1)}^2
		+\|G_i\|_{H^1(Q_1)}^2
		\right)<\infty,
		\label{eq:tangential-DQ-estimate}
	\end{equation}
	uniformly for small $h$. Moreover, since \(u_i\in H^1\), \(\|D_h^a u_i\|_{L^2(Q_0)}^2\le \|u_i\|_{H^1(Q_1)}<\infty\). Weak compactness of difference quotients then gives
	\(
	\partial_{y_a}u_i\in H^1(Q_0).
	\)
	Since $a$ was arbitrary, all second weak derivatives with at least one tangential
	index belong to $L^2$ up to the binding.
	
	For normal second derivatives, we again test \eqref{eq:target-variational} with functions compactly supported in the
	interior of page $i$ gives.
	Inserting the Fermi coordinates expansion
	\(
	\Delta_{g_i}
	=
	\partial_{rr}
	+\kappa_i(y,r)\partial_r
	+h_i^{ab}(y,r)\partial_{y_ay_b}
	+d_i^a(y,r)\partial_{y_a},
	\)
	with smooth bounded coefficients on the closure \(\overline Q_1\), in \eqref{eq:interior-pde}, gives
	\begin{equation}
		\partial_{rr}u_i
		=
		2\alpha u_i-2G_i-\kappa_i\partial_ru_i
		-h_i^{ab}\partial_{y_ay_b}u_i-d_i^a\partial_{y_a}u_i,
		\label{eq:recover-rr}
	\end{equation}
	where \(\kappa_i:=\partial_r \log K_i\) and \(d_i^a:=1/K_i\,\partial_{y_a}(K_ih_i^{ab})\).
	Since \(\partial_{y_a}u_i\in H^1(Q_0)\), every term on the right hand side in particular \(h_i^{ab}\partial_{y_ay_b}u_i\), is in $L^2(Q_0)$.  Hence
	$\partial_{rr}u_i\in L^2$ there and
	\(
	u_i\in H^2(Q_0).
	\)
	
	We now bootstrap the preceding $H^2$ regularity.  The main point compared with the standard higher-order difference-quotient bootstrap, is that, when tangential difference quotients are iterated,
	the corresponding test vectors must continue to satisfy the common-trace
	constraint defining $\mathcal F_K$. Fix \(m\ge 1\) and
	let $a_1,\ldots,a_m$ be tangential coordinate directions.  For sufficiently
	small parameters $\mathbf h=(h_1,\ldots,h_m)$ set
	\(
	\Delta_{\mathbf h}
	:=
	D_{h_m}^{a_m}\cdots D_{h_1}^{a_1}\) and \(
	w_{\mathbf h,i}:=\Delta_{\mathbf h}u_i.
	\)
	Let $\Delta_{\mathbf h}^{\,*}$ denote the formal adjoint of
	$\Delta_{\mathbf h}$ with respect to the coordinate measure $dy\,dr$; thus it
	is the corresponding composition of backward difference quotients, with the
	signs furnished by discrete integration by parts.  We use the test vector
	\(
	\varphi_i
	=
	\Delta_{\mathbf h}^{\,*}
	\bigl(\eta^2w_{\mathbf h,i}\bigr).
	\)
	
	Tangential
	translations commute with the trace operator, and hence
	\(
	\Tr w_{\mathbf h,i}
	=
	\Delta_{\mathbf h}(\Tr u_i)
	\)
	independently of $i$.  Since the cutoff is the same on every page, the traces
	of $\eta^2w_{\mathbf h,i}$ are again common, and applying the tangential
	operator $\Delta_{\mathbf h}^{\,*}$ preserves this property.  Thus \(\phi\) is admissible in $\mathcal F_K$.
	
	Now, the bootstrap procedure described in Appendix \ref{app:bootstrap} gives \(u_i \in C^\infty(M_i)\).	
	This makes Green's formula legitimate.  For arbitrary $\varphi\in\cF_K$ with
	smooth page components, the bulk equations cancel the volume terms in
	\eqref{eq:target-variational}, leaving
	\[
	-\frac12\int_\Gamma
	\left(\sum_i\pi_i\partial_{\nu_i}u_i\right)
	\varphi|_\Gamma\,d\sigma_h=0.
	\]
	The common boundary trace of $\varphi$ is arbitrary, so
	\eqref{eq:weak-kirchhoff} follows.  Equality of the traces was already part of
	$u\in\cF_K$.
\end{proof}

\begin{lemma}
	\label{lem:target-resolvent}
	Fix $\alpha>0$.  For every $G\in\mathcal G$ there exists a unique
	$u\in\cC_K$ such that
	\begin{equation}
		(\alpha-A_0)u=G.
		\label{eq:resolvent-equation}
	\end{equation}
	Consequently
	\(
	\mathcal G\subset(\alpha-A_0)\cC_K.
	\)
\end{lemma}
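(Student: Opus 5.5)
Existence comes from the variational problem \eqref{eq:target-variational}, and uniqueness from the positive maximum principle in \Cref{lem:target-density-pmp}.

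\textbf{Existence.} Fix $G\in\mathcal G$. The form $\alpha\langle\cdot,\cdot\rangle_{L^2(\mu)}+\cE_K$ is bounded and coercive on the Hilbert space $\cF_K$, since it defines an equivalent norm there. The functional $\varphi\mapsto\langle G,\varphi\rangle_{L^2(\mu)}$ is bounded on $\cF_K$, because $G$ is bounded and $\mu$ is finite. By Lax--Milgram there is a unique $u\in\cF_K$ solving \eqref{eq:target-variational}.

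By \Cref{lem:weak-smooth}, each $u_i\in C^\infty(M_i)$ and the weighted Kirchhoff condition \eqref{eq:weak-kirchhoff} holds. The traces of the $u_i$ agree because $u\in\cF_K$. Since the $u_i$ are smooth up to $\Gamma$, the tuple glues to a continuous function, so $u\in\mathcal G$.

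It remains to check the second condition in \eqref{eq:CK}. The interior equation \eqref{eq:interior-pde} holds classically on $M_i^\circ$. Both sides are continuous up to the boundary, so letting $r\to0$ gives
\[
\tfrac12\Delta_{g_i}u_i|_\Gamma=\alpha u_i|_\Gamma-G_i|_\Gamma .
\]
The right-hand side is independent of $i$, because $u$ and $G$ both have common traces on $\Gamma$. Hence $\Delta_{g_1}u_1|_\Gamma=\cdots=\Delta_{g_k}u_k|_\Gamma$, and therefore $u\in\cC_K$.

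The identity $(\alpha-A_0)u=G$ then holds on every page interior. On $\Gamma$ it holds by the definition of $A_0u$ as the common boundary value of $\tfrac12\Delta_{g_i}u_i$, using the limit identity just derived.

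\textbf{Uniqueness.} Suppose $v\in\cC_K$ satisfies $(\alpha-A_0)v=0$. Since $\cM$ is compact, $v$ attains its maximum at some point $z$. The positive maximum principle from \Cref{lem:target-density-pmp} gives $A_0v(z)\le0$, so $\alpha v(z)=A_0v(z)\le0$, that is, $\max v\le0$. Applying the same argument to $-v$ gives $\min v\ge0$. Hence $v=0$.

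The inclusion $\mathcal G\subset(\alpha-A_0)\cC_K$ follows immediately from existence.

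The only delicate point is the passage from the weak solution to membership in the core $\cC_K$. This rests entirely on the up-to-the-boundary smoothness supplied by \Cref{lem:weak-smooth}, which is assumed here. Everything else in the argument is routine.
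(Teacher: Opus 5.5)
Your proof is correct. The existence half matches the paper's: Lax--Milgram in $\cF_K$, \Cref{lem:weak-smooth} for pagewise smoothness and the Kirchhoff condition, then letting $r\to0$ in $\alpha u_i-\tfrac12\Delta_{g_i}u_i=G_i$ and using the common traces of $u$ and $G$ to get equal boundary Laplacians.

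You differ in the uniqueness step. The paper says uniqueness ``follows from coercivity of the variational problem''. To make that precise, one observes that any $u\in\cC_K$ with $(\alpha-A_0)u=G$ has a common trace, so it lies in $\cF_K$. By pagewise Green's formula together with $\sum_i\pi_i\partial_{\nu_i}u_i=0$, it then solves \eqref{eq:target-variational}, and so it coincides with the Lax--Milgram solution. This gives uniqueness in the larger energy class, but it needs the integration-by-parts step, which the paper leaves implicit.

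Your argument stays entirely in $C(\cM)$. Compactness plus the positive maximum principle of \Cref{lem:target-density-pmp} give injectivity of $\alpha-A_0$ on $\cC_K$ directly, with no Green's formula. It is the same dissipativity bound $\norm{(\alpha-A_0)f}_\infty\ge\alpha\norm{f}_\infty$ that the paper derives anyway in \Cref{prop:target-mp}, so it also anticipates the later use.

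The price is that your route rests on the binding case of the maximum principle. There, the Kirchhoff condition and $\pi_i>0$ are what force $\partial_r v_i(z)=0$, and hence $\partial_{rr}v_i(z)\le0$. So both arguments ultimately use the flux condition, just in different forms.
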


\begin{proof}
	The variational problem \eqref{eq:target-variational} has a unique
	$u\in\cF_K$ by Lax--Milgram.  By \Cref{lem:weak-smooth}, the solution is
	pagewise smooth and satisfies the weighted Kirchhoff condition.  Testing with
	functions supported in one page gives pointwise
	\(
	\alpha u_i-1/2\,\Delta_{g_i}u_i=G_i.
	\)
	Since both $u$ and $G$ have common boundary traces, taking boundary values in
	this identity yields
	\(
	\Delta_{g_i}u_i|_\Gamma
	=2\bigl(\alpha u|_\Gamma-G|_\Gamma\bigr),
	\)
	independent of $i$.  Hence $u\in\cC_K$ and
	\eqref{eq:resolvent-equation} holds.  Uniqueness follows from coercivity of the
	variational problem.
\end{proof}

\begin{proposition}
	\label{prop:target-mp}
	The closure \((A,D(A))\) of \((A_0,\cC_K)\) in
	\(C(\cM)\times C(\cM)\) generates a conservative Feller semigroup
	\(\{T_t\}_{t\ge0}\) on \(\cM\).  There is a corresponding
	c\`adl\`ag strong Markov family
	\(\{\Pp_z:z\in\cM\}\).  For every initial distribution
	\(\mu_0\in\mathcal P(\cM)\), the martingale problem for
	\((A_0,\cC_K,\mu_0)\) is well posed.
	
	In particular, \(\cC_K\) is a core for \(A\), and for
	\(f\in\cC_K\)
	\[
	(Af)|_{M_i^\circ}
	=
	\frac12\Delta_{g_i}f_i .
	\]
\end{proposition}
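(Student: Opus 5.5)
The plan is to apply the Hille--Yosida--Ray theorem in the form of Ethier--Kurtz (Theorem~4.2.2 and Proposition~4.2.1). In that form a linear operator on \(C(\cM)\), with \(\cM\) compact, is closable and its closure generates a Feller semigroup provided three things hold: the domain is dense, the operator satisfies the positive maximum principle, and the range of \(\alpha-A_0\) is dense for some \(\alpha>0\). All three are already available. \Cref{lem:target-density-pmp} gives density of \(\cC_K\) and the positive maximum principle. \Cref{lem:target-resolvent} gives \(\mathcal G\subset(\alpha-A_0)\cC_K\), and \(\mathcal G\supset\cC_K\) is uniformly dense by the same lemma. Hence the range is dense for every \(\alpha>0\).

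First I record that the positive maximum principle makes \(A_0\) dissipative. Indeed, if \(\|f\|=f(z)\) at a maximum point \(z\), then \(\|(\alpha-A_0)f\|\ge\alpha f(z)-A_0f(z)\ge\alpha\|f\|\); if the norm is attained at a minimum, apply the same argument to \(-f\). Dissipativity gives closability of \(A_0\). Because the range is dense and the closure is dissipative, the range of \(\alpha-A\) is closed, so it equals all of \(C(\cM)\). Lumer--Phillips then yields a strongly continuous contraction semigroup \(T_t\). The positive maximum principle passes to the closure and makes \(T_t\) positive.

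Conservativeness follows from \(\bone\in\cC_K\) and \(A_0\bone=0\), which give \(T_t\bone=\bone\). Since \(\cM\) is compact, the semigroup is Feller. Standard theory (Ethier--Kurtz Theorem~4.2.7) then gives a càdlàg strong Markov family \(\{\Pp_z\}\) with transition semigroup \(T_t\). By construction \(\cC_K\) is a core, and \(Af=A_0f\) on it, which is the pagewise formula \(\tfrac12\Delta_{g_i}f_i\).

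For well-posedness of the martingale problem, existence comes from the Markov family: for \(f\in\cC_K\), \(f(X_t)-\int_0^tA_0f(X_s)\,ds\) is a martingale, and mixing over \(\mu_0\) handles general initial laws. Uniqueness comes from Ethier--Kurtz Theorem~4.4.1: if \(A_0\) is dissipative and \((\alpha-A_0)\cC_K\) is separating, then one-dimensional distributions are unique, and uniqueness of the finite-dimensional distributions follows by Theorem~4.4.2. Here the range contains \(\mathcal G\), which is dense in \(C(\cM)\) and hence separating. Alternatively, a bounded-pointwise closure argument shows that any solution of the \((A_0,\cC_K)\) martingale problem solves the one for the closure \(A\), whose range is all of \(C(\cM)\).

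The only genuinely delicate point is the range condition, namely the passage from the density of \(\mathcal G\) to surjectivity of \(\alpha-A\). That passage relies on the closure remaining dissipative and on the full strength of \Cref{lem:target-resolvent}: one needs exact solutions lying in \(\cC_K\), with the equal-Laplacian trace condition, for every pagewise smooth continuous \(G\). This is already established, so the remaining steps are routine.
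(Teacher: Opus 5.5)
Your proposal is correct and follows essentially the same route as the paper. Both arguments feed \Cref{lem:target-density-pmp} and \Cref{lem:target-resolvent} into Ethier--Kurtz Theorem~4.2.2, obtain conservativeness from \(A_0\bone=0\), and take the c\`adl\`ag strong Markov family from Theorem~4.2.7. For uniqueness, both use dissipativity together with \(\overline{\mathcal R(\alpha-A_0)}=\overline{\cC_K}=C(\cM)\) in Theorem~4.4.1, which in the form the paper applies it already gives uniqueness of the full martingale problem, so your detour through one-dimensional marginals and Theorem~4.4.2 is harmless but unnecessary.
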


\begin{proof}
	Since \(\cM\) is compact metric,
	\(C(\cM)=C_0(\cM)\).  By
	\Cref{lem:target-density-pmp}, the domain \(\cC_K\) is uniformly
	dense, \(A_0\) satisfies the positive maximum principle, and
	\(A_0\bone=0\).  By \Cref{lem:target-resolvent}, for every
	\(\alpha>0\),
	\(
	\overline{\mathcal R(\alpha-A_0)}^{\|\cdot\|_\infty}
	=
	C(\cM),
	\)
	because the range contains the uniformly dense class
	\(\mathcal G\).  Hence the hypotheses of
	Ethier--Kurtz~\cite[Chapter~4, Theorem~2.2]{ethier2009markov}
	are satisfied, and the closure of \(A_0\) generates a strongly
	continuous positive contraction semigroup on \(C(\cM)\).
	Since \(\bone\in\cC_K\) and \(A_0\bone=0\), the abstract Cauchy
	problem gives
	\(
	T_t\bone=\bone,
	\)
	so the semigroup is conservative.
	
	By
	\cite[Chapter~4, Theorem~2.7]{ethier2009markov}, the Feller
	semigroup determines a c\`adl\`ag strong Markov family on \(\cM\).
	For \(f\in D(A)\), the associated process satisfies the generator
	martingale relation; see
	\cite[Chapter~4, Proposition~1.7]{ethier2009markov}.
	Consequently the Feller family solves the martingale problem for the
	restriction \((A_0,\cC_K)\).
	
	It remains to verify uniqueness for that restricted martingale
	problem.  The positive maximum principle implies dissipativity in
	the uniform norm.  Indeed, for \(\alpha>0\) and \(f\in\cC_K\),
	apply the maximum principle to \(f\) or to \(-f\) at a point where
	\(|f|\) attains \(\norm{f}_\infty\); this gives
	\begin{equation}
		\norm{(\alpha-A_0)f}_\infty
		\ge
		\alpha\norm{f}_\infty.
		\label{eq:dissipative-sup}
	\end{equation}
	Moreover,
	\(
	\overline{\cD(A_0)}^{\|\cdot\|_\infty}
	=
	\overline{\mathcal R(\alpha-A_0)}^{\|\cdot\|_\infty}
	=
	C(\cM).
	\)
	Thus the hypotheses of
	Ethier--Kurtz~\cite[Chapter~4, Theorem~4.1]{ethier2009markov}
	apply with the separating space \(L=C(\cM)\), and the martingale
	problem for \((A_0,\cC_K,\mu_0)\) is unique for every initial
	distribution \(\mu_0\).  Existence was obtained from the Feller
	family, so the martingale problem is well posed.
\end{proof}

\begin{remark}
	\label{rem:target-literature}
	We refer to the process of \Cref{prop:target-mp} as the
	\emph{weighted Kirchhoff diffusion} on the Riemannian open book.
	The proposition is included to provide the exact \(C(\cM)\)-core and
	well-posed martingale problem required below.  We do not regard the
	existence of Kirchhoff diffusions on branched spaces itself as a new
	ingredient; the distinctions from the cited constructions are the
	particular smooth Riemannian geometry, the weights
	\(\pi_i\), and the concrete generator domain used by the
	perturbed-test-function argument.
\end{remark}

\section{Boundaryless GRSD approximants}
\label{sec:approximants}

\subsection{An arbitrarily thinner smoothing collar}

Fix $\chi\in C^\infty([0,\infty);[0,1])$ such that
\(
\chi(u)=0\) on \(0\le u\le1\), and
\(
\chi(u)=1\) on \(u\ge2\).
Let $\rho_\eps$ be any positive scale satisfying
\(
0<4\rho_\eps<\eps
\) and \(\rho_\eps/\eps\to0.
\)
For instance, one may take $\rho_\eps=\eps^2$ for all sufficiently small $\eps$.
Define on the collar
\begin{equation}
	h_i^\eps(y,r)
	:=
	h(y)+\chi(r/\rho_\eps)\big(h_i(y,r)-h(y)\big),
	\qquad
	g_i^\eps:=dr^2+h_i^\eps(y,r),
	\label{eq:regularized-metric}
\end{equation}
and put $g_i^\eps=g_i$ outside $r<2\rho_\eps$.  Then
\(
g_i^\eps=dr^2+h\text{ for }0\le r\le\rho_\eps\) and
\(  g_i^\eps=g_i\text{ for }r\ge2\rho_\eps.
\)
Write
\[
d\vol_{g_i^\eps}
=J_i^\eps(y,r)\,dr\,d\sigma_h(y)\]
in Fermi coordinates.  By construction,
\(
J_i^\eps(y,r)=1\) for \(0\le r\le\rho_\eps\) and \(
J_i^\eps(y,r)=J_i(y,r)\) for \(r\ge2\rho_\eps\).

\begin{lemma}
	\label{lem:metric-comparison}
	There exist $C\ge1$ and $\eps_0>0$, independent of the page $i$,
	such that for every $0<\eps<\eps_0$ the following hold.
	
	\begin{enumerate}[label=(\roman*)]
		
		\item The regularized metrics and their duals are uniformly equivalent:
		\begin{equation}
			C^{-1}g_i\le g_i^\eps\le Cg_i,
			\qquad
			C^{-1}g_i^{-1}\le (g_i^\eps)^{-1}\le Cg_i^{-1}
			\qquad\text{on }M_i .
			\label{eq:metric-uniform-equivalence}
		\end{equation}
		
		\item In the fixed Fermi collar,
		\begin{equation}
			C^{-1}h
			\le h_i^\eps(\cdot,r)\le Ch,
			\qquad
			C^{-1}h^{-1}
			\le (h_i^\eps(\cdot,r))^{-1}\le Ch^{-1},
			\qquad 0\le r\le r_0 .
			\label{eq:tangential-metric-uniform-equivalence}
		\end{equation}
		
		\item Writing
		\(
		\kappa_i^\eps(y,r):=\partial_r\log J_i^\eps(y,r),
		\)
		one has
		\begin{equation}
			\sup_{\substack{0<\eps<\eps_0,\;1\le i\le k\\
					y\in\Gamma,\;0\le r\le r_0}}
			|\kappa_i^\eps(y,r)|
			+
			\sup_{\substack{0<\eps<\eps_0,\;1\le i\le k\\
					0\le r\le r_0}}
			\|\Delta_{h_i^\eps(r)}\|_
			{C^2(\Gamma,h)\to C^0(\Gamma)}
			\le C .
			\label{eq:fermi-uniform-coefficients}
		\end{equation}
		
	\end{enumerate}
\end{lemma}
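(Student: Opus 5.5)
The plan is to reduce everything to the convex-combination structure of \eqref{eq:regularized-metric}. Write \(\chi_\eps(r):=\chi(r/\rho_\eps)\in[0,1]\). Then \(h_i^\eps(y,r)=(1-\chi_\eps(r))h(y)+\chi_\eps(r)h_i(y,r)\) is a pointwise convex combination of two Riemannian metrics on \(\Gamma\). By compactness of \(\Gamma\times[0,r_0]\), finiteness of the page set and \(h_i(y,0)=h(y)\), there is \(C_0\) with \(C_0^{-1}h\le h_i(\cdot,r)\le C_0h\) for \(0\le r\le r_0\) and all \(i\). Convex combinations preserve the two-sided bounds \(C_0^{-1}h\le h_i^\eps\le C_0h\). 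For symmetric positive matrices, \(A\le B\) implies \(B^{-1}\le A^{-1}\), so the dual bounds in (ii) follow as well.

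For (i), note that \(g_i^\eps=g_i\) outside \(r<2\rho_\eps\). Inside the collar both metrics have the block form \(dr^2+(\cdot)\), so comparing \(g_i^\eps=dr^2+h_i^\eps\) with \(g_i=dr^2+h_i\) reduces to comparing \(h_i^\eps\) with \(h_i\). This is immediate from (ii) and the bound \(C_0^{-1}h\le h_i\le C_0h\), with constant \(C_0^2\). The duals are handled the same way.

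For (iii), I would write \(J_i^\eps=(\det h_i^\eps/\det h)^{1/2}\). Then \(\kappa_i^\eps=\tfrac12\Tr\big((h_i^\eps)^{-1}\partial_rh_i^\eps\big)\), and
\[
\partial_rh_i^\eps=\chi_\eps(r)\,\partial_rh_i+\rho_\eps^{-1}\chi'(r/\rho_\eps)\,(h_i-h).
\]
\textbf{This is the main obstacle:} the factor \(\rho_\eps^{-1}\) blows up. It is compensated by \eqref{eq:metric-first-order}, since \(\chi'(r/\rho_\eps)\ne0\) only for \(\rho_\eps\le r\le2\rho_\eps\), where \(\norm{h_i(\cdot,r)-h}_{C^0(\Gamma)}\le Cr\le2C\rho_\eps\). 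Hence \(|\partial_rh_i^\eps|_h\le C+2C\norm{\chi'}_\infty\). Combining this with the dual bound from (ii) gives \(|\kappa_i^\eps|\le C\) uniformly.

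For the Laplacian, in local coordinates on a finite atlas of \(\Gamma\) we have \(\Delta_{h_i^\eps(r)}=(h_i^\eps)^{ab}\partial_{ab}+d^a\partial_a\). The coefficient \(d^a\) involves only \(y\)-derivatives of \(h_i^\eps\) and of \(\log\det h_i^\eps\). Since \(\chi_\eps\) depends only on \(r\), these tangential derivatives are convex combinations of tangential derivatives of \(h\) and \(h_i\). They are bounded uniformly in \(\eps\), \(r\) and \(i\) by smoothness on the compact collar, together with the bounded inverse from (ii). This yields the uniform \(C^2(\Gamma,h)\to C^0(\Gamma)\) bound. Finally, \(\eps_0\) is chosen so that \(2\rho_\eps<\eps<r_0\), which ensures the collar manipulations are legitimate.
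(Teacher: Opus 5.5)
Your proposal is correct and follows essentially the same route as the paper. The convex-combination structure of $h_i^\eps$ gives (ii), and then (i) via the block form $dr^2+(\cdot)$. The $\rho_\eps^{-1}\chi'(r/\rho_\eps)$ term in $\partial_r h_i^\eps$ is absorbed exactly as you do, using $|h_i-h|_h\le Cr\le 2C\rho_\eps$ on its support. The only cosmetic difference is in the Laplacian bound. The paper works covariantly, via $\nabla^h h_i^\eps=\chi(r/\rho_\eps)\nabla^h h_i$ and the difference-of-connections formula, whereas you do the same computation in coordinates on a finite atlas. Both rely on the fact that the cutoff depends only on $r$.
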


\begin{proof}
	Because the number of pages is finite and
	$\Gamma\times[0,r_0]$ is compact, there is $C\ge1$ such that
	\(
	C^{-1}h\le h_i(\cdot,r)\le Ch
	\)
	uniformly in $i$, $y$ and $r$.  Since
	\(
	h_i^\eps
	=
	(1-\chi(r/\rho_\eps))h
	+
	\chi(r/\rho_\eps)h_i
	\)
	and $0\le\chi\le1$, \eqref{eq:tangential-metric-uniform-equivalence} follows.
	
	Outside $r<2\rho_\eps$ one has $g_i^\eps=g_i$.  Inside the Fermi
	collar, the preceding bounds imply
	\(
	C^{-1}g_i\le g_i^\eps\le Cg_i.
	\)
	Inverting the quadratic-form inequalities gives the corresponding
	dual-metric bounds, proving
	\eqref{eq:metric-uniform-equivalence}.
	
	Differentiating
	\eqref{eq:regularized-metric} in the normal direction gives
	\(
	\partial_rh_i^\eps
	=
	\rho_\eps^{-1}\chi'(r/\rho_\eps)(h_i-h)
	+
	\chi(r/\rho_\eps)\partial_rh_i .
	\)
	The second term is uniformly bounded by smoothness.  The first is
	supported where $\rho_\eps\le r\le2\rho_\eps$, and there
	$|h_i-h|_h\le Cr$; hence
	\(
	\sup_{\eps,i,y,r}
	|\partial_rh_i^\eps(y,r)|_h
	\le C.
	\)
	Since
	\(
	\kappa_i^\eps
	=
	\partial_r\log J_i^\eps
	=
	\frac12
	\Tr\!\left(
	(h_i^\eps)^{-1}\partial_rh_i^\eps
	\right),
	\)
	\eqref{eq:tangential-metric-uniform-equivalence} yields
	\(
	\sup_{\eps,i,y,r}|\kappa_i^\eps(y,r)|\le C.
	\)

	For the tangential coefficients, since the cutoff depends only on $r$,
	\(
	\nabla^h h_i^\eps
	=
	\chi(r/\rho_\eps)\nabla^h h_i,
	\) where \(\nabla^h\) denotes the Levi-Civita connection with respect to \(h\).
	Hence smoothness of the original metrics on the finitely many compact
	collars gives
	\(
	\sup_{\eps,i,y,r}
	|\nabla^h h_i^\eps(y,r)|_h\le C.
	\)
	The standard difference-of-Levi--Civita-connections formula gives
	\[
	|\nabla^{h_i^\eps(r)}-\nabla^h|_h
	\le
	C\,|(h_i^\eps(r))^{-1}|_h\,
	|\nabla^h h_i^\eps(r)|_h
	\le C .
	\]
	For $u\in C^2(\Gamma)$,
	\(
	\nabla^{2,h_i^\eps(r)}u
	=
	\nabla^{2,h}u-du\circ (\nabla^{h_i^\eps(r)}-\nabla^h ),
	\)
	and therefore, taking the trace with respect to $h_i^\eps(r)$ and using
	\eqref{eq:tangential-metric-uniform-equivalence},
	\(
	|\Delta_{h_i^\eps(r)}u|
	\le
	C\bigl(|\nabla^{2,h}u|_h+|du|_h\bigr).
	\)
	Consequently,
	\[
	\sup_{\eps,i,r}
	\|\Delta_{h_i^\eps(r)}\|_{C^2(\Gamma,h)\to C^0(\Gamma)}
	\le C,
	\]
	which completes \eqref{eq:fermi-uniform-coefficients}.
\end{proof}

\subsection{Smooth doubles}

For each \(i\), let \(M_i^+\) and \(M_i^-\) be two copies of \(M_i\), and define
the topological double
\[
\widetilde M_i
:=
(M_i^+\sqcup M_i^-)/{\sim},
\qquad
x^+\sim x^- \quad\text{for }x\in\Gamma .
\]
Away from the identified boundary, \(\widetilde M_i\) carries the smooth
structures of the two copies.  Across the seam, the smooth structure is
defined by the signed collar chart
\(
\widetilde\Phi_i:
\Gamma\times(-r_0,r_0)\to \widetilde M_i,
\)
where
\[
\widetilde\Phi_i(y,s)
=
\begin{cases}
	\Phi_i(y,s)\in M_i^+, & s\ge0,\\
	\Phi_i(y,-s)\in M_i^-, & s\le0.
\end{cases}
\]
With this smooth structure, \(\widetilde M_i\) is a compact smooth manifold
without boundary.

Because \(g_i^\varepsilon=dr^2+h\) for \(0\le r\le\rho_\varepsilon\),
the two copies of \(g_i^\varepsilon\) glue smoothly across the seam.  We
denote the resulting Riemannian metric on \(\widetilde M_i\) by
\(\widetilde g_i^\varepsilon\).  In the signed collar coordinates,
\[
\widetilde g_i^\varepsilon
=
ds^2+h_i^\varepsilon(y,|s|).
\]
This tensor is smooth at \(s=0\), since
\(h_i^\varepsilon(y,r)=h(y)\) for \(0\le r\le\rho_\varepsilon\).

Let
\[
\varpi_i:\widetilde M_i\longrightarrow M_i
\]
be the folding map obtained by identifying each copy with \(M_i\).  In the
signed collar,
\(
\varpi_i(y,s)=(y,|s|).
\)
\subsection{The finite-state switching mechanism}

Let $Q=(Q_{ij})_{1\le i,j\le k}$ be an irreducible Markov generator:
\[
Q_{ij}\ge0\quad(i\ne j),
\qquad
Q_{ii}=-\sum_{j\ne i}Q_{ij}.
\]
Let $\pi=(\pi_1,\dots,\pi_k)$ be its unique invariant probability vector, that is
\(
\pi^\top Q=0\) and \(
\pi_i>0.
\)

Equip $\R^k$ with
\(
\ip{u}{v}_\pi:=\sum_{i=1}^k\pi_i u_iv_i
\)
and define
\[
\mathsf H_0:=\left\{u\in\R^k:\ip{u}{\bone}_\pi=0\right\}.
\]
Write
\(
\pi_*:=\min_{1\le i\le k}\pi_i>0.
\)
Since \(k<\infty\) and every \(\pi_i>0\), the weighted norm
\(
\|u\|_\pi^2
\)
is equivalent to the usual norms on \(\R^k\). In particular,
\(
	\|u\|_\infty
	:=
	\max_{1\le i\le k}|u_i|
	\le
	\pi_*^{-1/2}\|u\|_\pi\) and \(
	\|u\|_\pi\le\|u\|_\infty.
	\)
The same comparison will be used for page-indexed vectors of
covectors or tensors, with the corresponding pointwise norm in each
component. We shall refer to this repeatedly as \emph{finite-dimensional norm equivalence}.

For $u\in\R^k$ put
\(
\cD_Q(u):=-\ip{u}{Qu}_\pi.
\)

\begin{lemma}
	\label{lem:Q-dissipation}
	For every $u\in\R^k$,
	\begin{equation}
		\cD_Q(u)
		=
		\frac12\sum_{i\ne j}\pi_iQ_{ij}(u_j-u_i)^2.
		\label{eq:DQ-edge}
	\end{equation}
	Consequently $\cD_Q(u)\ge0$, $\cD_Q(u)=0$ if and only if $u$ is constant, and there exists $\lambda_Q>0$ such that
	\begin{equation}
		\cD_Q(u)\ge\lambda_Q\norm{u}_\pi^2,
		\qquad u\in\mathsf H_0.
		\label{eq:Q-gap}
	\end{equation}
\end{lemma}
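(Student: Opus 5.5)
The plan is to expand the quadratic form directly and use the two defining identities of $Q$ and $\pi$: the row sums of $Q$ vanish, and $\pi^\top Q=0$ (the column identity $\sum_i\pi_iQ_{ij}=0$). First I will write
\[
\cD_Q(u)=-\sum_{i}\pi_iu_i\sum_{j}Q_{ij}u_j
=-\sum_{i\ne j}\pi_iQ_{ij}u_iu_j-\sum_i\pi_iQ_{ii}u_i^2 .
\]
Using $Q_{ii}=-\sum_{j\ne i}Q_{ij}$, the diagonal part becomes $\sum_{i\ne j}\pi_iQ_{ij}u_i^2$. Stationarity gives $\sum_{i\ne j}\pi_iQ_{ij}=-\pi_jQ_{jj}=\pi_j\sum_{i\ne j}Q_{ji}$ for each $j$, so
\[
\sum_{i\ne j}\pi_iQ_{ij}u_j^2=\sum_{j}\pi_ju_j^2\sum_{l\ne j}Q_{jl}=\sum_{i\ne j}\pi_iQ_{ij}u_i^2 .
\]
Hence the diagonal part equals $\tfrac12\sum_{i\ne j}\pi_iQ_{ij}(u_i^2+u_j^2)$. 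Adding the cross term then completes the square and yields \eqref{eq:DQ-edge}. This symmetrization of the diagonal term is the only step where care is needed, precisely because $Q$ is not assumed reversible. It is here that $\pi^\top Q=0$ replaces detailed balance.

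Nonnegativity is immediate from \eqref{eq:DQ-edge}, since $\pi_i>0$ and $Q_{ij}\ge0$ for $i\ne j$. If $u$ is constant, then $Qu=0$ and so $\cD_Q(u)=0$. Conversely, suppose $\cD_Q(u)=0$. Then $u_i=u_j$ whenever $Q_{ij}>0$. Irreducibility means that any two states are joined by a chain of such positive-rate edges, so $u$ is constant.

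For the gap \eqref{eq:Q-gap}, I will use a compactness argument in finite dimension. The map $\cD_Q$ is a continuous quadratic form on the finite-dimensional space $\mathsf H_0$. Restrict it to the compact unit sphere $\{u\in\mathsf H_0:\|u\|_\pi=1\}$, where it attains a minimum $\lambda_Q\ge0$. If $\lambda_Q=0$, the minimizer would be constant, say $u=c\bone$. But $\ip{u}{\bone}_\pi=c=0$ would contradict $\|u\|_\pi=1$. Hence $\lambda_Q>0$, and homogeneity of degree two extends the bound to all of $\mathsf H_0$. Equivalently, $\lambda_Q$ is the smallest eigenvalue on $\mathsf H_0$ of the symmetric part of $-Q$ in $\ip{\cdot}{\cdot}_\pi$. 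I would note this only as a remark.
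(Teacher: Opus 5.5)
Your proof is correct and follows essentially the same route as the paper. Both arguments use only the row-sum identity $Q\bone=0$ and stationarity $\pi^\top Q=0$ to convert $-\ip{u}{Qu}_\pi$ into the edge sum, then irreducibility for the kernel, and finally compactness of the $\pi$-unit sphere in $\mathsf H_0$ for $\lambda_Q>0$. The paper expands the edge sum and kills the two square terms directly, while you start from $\cD_Q(u)$ and symmetrize the diagonal part; your explicit check that no constant unit vector lies in $\mathsf H_0$ just spells out a step the paper leaves implicit.
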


\begin{proof}
	Since the terms with \(i=j\) vanish, we may sum over all \(i,j\). Expanding,
	\begin{align*}
		\frac12\sum_{i,j}\pi_iQ_{ij}(u_j-u_i)^2
		&=
		\frac12\sum_j u_j^2\sum_i\pi_iQ_{ij}
		+
		\frac12\sum_i\pi_i u_i^2\sum_jQ_{ij}
		-\sum_{i,j}\pi_iQ_{ij}u_iu_j.
	\end{align*}
	The first term vanishes because \(\pi^\top Q=0\), while the second
	vanishes because \(Q\mathbf 1=0\). Hence
	\[
	\frac12\sum_{i,j}\pi_iQ_{ij}(u_j-u_i)^2
	=
	-\sum_i\pi_i u_i(Qu)_i
	=
	-\langle u,Qu\rangle_\pi.
	\]
	This proves \eqref{eq:DQ-edge}.
	
	All terms on the right-hand side of \eqref{eq:DQ-edge} are
	nonnegative. If \(\mathcal D_Q(u)=0\), then \(u_i=u_j\) whenever
	\(Q_{ij}>0\). Irreducibility therefore implies that \(u\) is constant.
	Conversely, constants clearly have zero energy.
	It follows that \(\mathcal D_Q\) is strictly positive on the unit
	sphere of \(\mathsf H_0\). Since this sphere is compact,
	\[
	\lambda_Q:=
	\min_{\substack{u\in\mathsf H_0\\\|u\|_\pi=1}}
	\mathcal D_Q(u)>0,
	\]
	and homogeneity gives \eqref{eq:Q-gap}.
\end{proof}


\begin{lemma}
	\label{lem:pointwise-DQ}
	There is a constant $C_Q<\infty$ such that for every $u\in\R^k$ and every state $i$,
	\[
	\sum_{j\ne i}Q_{ij}(u_j-u_i)^2
	\le C_Q\,\cD_Q(u).
	\]
\end{lemma}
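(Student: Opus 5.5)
The plan is a comparison of the single-row quantity with the full edge sum, using the formula \eqref{eq:DQ-edge} from \Cref{lem:Q-dissipation}. By that identity,
\[
\cD_Q(u)=\frac12\sum_{l\ne j}\pi_lQ_{lj}(u_j-u_l)^2 .
\]
Every summand on the right is nonnegative, so dropping all terms with $l\neq i$ gives
\[
\cD_Q(u)\ge \frac12\,\pi_i\sum_{j\ne i}Q_{ij}(u_j-u_i)^2 .
\]
Rearranging and using $\pi_i\ge\pi_*>0$ yields
\[
\sum_{j\ne i}Q_{ij}(u_j-u_i)^2\le \frac{2}{\pi_i}\cD_Q(u)\le \frac{2}{\pi_*}\cD_Q(u),
\]
so the statement holds with $C_Q:=2/\pi_*$. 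This constant is finite because irreducibility gives $\pi_*>0$, and it is independent of $u$ and of $i$.

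The only points to check are the nonnegativity of the off-diagonal entries $Q_{lj}$, $l\ne j$, and the strict positivity of every $\pi_l$. Both are part of the standing assumptions on $Q$ and $\pi$, so there is no genuine obstacle here. In particular, no spectral-gap argument such as \eqref{eq:Q-gap} is needed: the estimate is a termwise comparison inside a sum of nonnegative terms.
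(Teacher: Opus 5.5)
Your proof is correct and matches the paper's argument: both drop every term with starting state $\ell\neq i$ from the edge representation \eqref{eq:DQ-edge} of $\cD_Q$, which yields the same constant $C_Q=2/\pi_*$.
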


\begin{proof}
	By \eqref{eq:DQ-edge},
	\[
	2\cD_Q(u)
	=
	\sum_{\ell\ne m}\pi_\ell Q_{\ell m}(u_m-u_\ell)^2
	\ge
	\pi_i\sum_{j\ne i}Q_{ij}(u_j-u_i)^2.
	\]
	Thus one may take $C_Q=2/\min_i\pi_i$.
\end{proof}

Let $q_\eps:[0,\infty)\to[0,\infty)$ be bounded and continuous for each fixed $\eps$, with
\(
\supp q_\eps\subset[0,\eps].
\)
\begin{assumption}
	\label{ass:main-scaling}
	Define
	\[
	\Theta_\eps:=\int_0^\eps q_\eps(r)\,dr,
	\qquad
	\Xi_\eps:=\int_0^\eps r^2q_\eps(r)\,dr.
	\]
	We assume
	\(
	\Theta_\eps\to\infty\) and\(
	\Xi_\eps\to0.
	\)
\end{assumption}

\subsection{Self-contained construction of the microscopic processes}

Set
\[
\widetilde E:=\bigsqcup_{i=1}^k\widetilde M_i.
\]
For a point in the doubled collar write $(i,y,s)$, with $s\in(-r_0,r_0)$.  A jump from page $i$ to page $j$ preserves $(y,s)$ and has rate
\begin{equation}
	\lambda_{ij}^\eps(y,s)
	:=
	q_\eps(|s|)Q_{ij},
	\qquad i\ne j.
	\label{eq:raw-rates}
\end{equation}
Outside $|s|\le\eps$ there are no page jumps. 

We may now define the microscopic approximating processes.
\begin{proposition}
	\label{thm:micro-existence}
	For every fixed $\eps>0$ and every starting point
	$\tilde x\in\widetilde E$, there exists a conservative càdlàg Feller, hence strong Markov, process
	$\widetilde X^\eps$ on \(\widetilde E\) with the following dynamics:
	\begin{enumerate}[label=(\roman*)]
		\item while the page label equals $i$, the continuous motion is Brownian motion on
		$(\widetilde M_i,\widetilde g_i^\eps)$;
		\item in the doubled collar, the page label jumps according to
		\eqref{eq:raw-rates}, preserving $(y,s)$.
	\end{enumerate}
	The number of page jumps on every bounded time interval is almost surely finite.  For every pagewise $C^2$ function $f=(f_i)$, the process
	\[
	f(\widetilde X_t^\eps)-f(\widetilde X_0^\eps)
	-\int_0^t\widetilde A_\eps f(\widetilde X_s^\eps)\,ds
	\]
	is a local martingale, where
	\begin{equation}
		(\widetilde A_\eps f)_i
		=
		\frac12\Delta_{\widetilde g_i^\eps}f_i
		+
		q_\eps(|s|)\sum_{j=1}^kQ_{ij}f_j,
		\label{eq:micro-generator}
	\end{equation}
	in the Fermi collar, while the sum is not present outside that collar.
\end{proposition}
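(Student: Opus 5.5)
I will construct $\widetilde X^\eps$ by Poisson thinning on top of the Brownian motions on the doubles. Fix $\eps$ and set $\bar q_\eps:=\sup_r q_\eps(r)<\infty$ and $\bar\lambda:=\bar q_\eps\max_i|Q_{ii}|$. Each $(\widetilde M_i,\widetilde g_i^\eps)$ is a compact smooth boundaryless Riemannian manifold. Brownian motion $B^{(i)}$ on it is therefore a conservative Feller diffusion, with Feller semigroup $P^{(i)}_t$ on $C(\widetilde M_i)$ and generator $\frac12\Delta_{\widetilde g_i^\eps}$. The set of smooth functions is a core for this generator. On $\widetilde E$ the "no switching" process is the disjoint-union diffusion, whose generator $\widetilde L f=(\frac12\Delta_{\widetilde g_i^\eps}f_i)_i$ generates a conservative Feller semigroup on $C(\widetilde E)$.

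\textbf{Construction.} Let $N$ be a Poisson process of rate $\bar\lambda$ with arrival times $\tau_1<\tau_2<\cdots$, and let $(U_n)$ be i.i.d.\ uniform marks, independent of $N$ and of the Brownian motions. Start the Brownian motion on the current page $i$. At each $\tau_n$, if the current position is $(i,y,s)$ with $|s|\le\eps$, move to page $j\ne i$ with probability $q_\eps(|s|)Q_{ij}/\bar\lambda$ and stay otherwise; the choice is made using $U_n$. The position $(y,s)$ is preserved, which makes sense because all doubles share the coordinates $\Gamma\times(-r_0,r_0)$. Then restart an independent Brownian motion on the new page from that point. The probabilities are well defined since $\sum_{j\ne i}q_\eps Q_{ij}\le\bar\lambda$. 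Page jumps occur only at Poisson times, so their number on $[0,T]$ is at most $N_T<\infty$ a.s. The process is càdlàg and conservative by construction.

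\textbf{Feller property and martingales.} The switching part is the bounded operator
\[
(Kf)_i(y,s)=q_\eps(|s|)\sum_jQ_{ij}f_j(y,s)\ \text{on the collar},\qquad (Kf)_i=0\ \text{off the collar}.
\]
Since $q_\eps$ is continuous, supported in $[0,\eps]$, and $\eps<r_0$, the function $Kf$ is continuous, so $K$ maps $C(\widetilde E)$ into itself. By the bounded perturbation theorem, $\widetilde L+K$ on $D(\widetilde L)$ generates a strongly continuous semigroup. It is positive and conservative, because $K$ has the form $\bar\lambda(P-I)$ with $P$ a Markov kernel that is Feller and $K\bone=0$. This is exactly the construction in Ethier--Kurtz [Ch.~4, Sec.~10]: a Feller process perturbed by bounded-rate jumps via Poisson thinning realizes the semigroup of $\widetilde L+K$. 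Hence $\widetilde X^\eps$ is Feller, and in particular strong Markov [Ch.~4, Thm.~2.7]. For $f$ smooth on each page, Dynkin's formula gives the martingale property with $\widetilde A_\eps$ as in \eqref{eq:micro-generator}.

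\textbf{Main obstacle: extending to pagewise $C^2$ functions.} The remaining step is to pass from smooth $f$ to pagewise $C^2$ functions, for which only the local martingale property is claimed. I will argue pathwise. Between switching times, Itô's formula on the compact manifold applies to $C^2$ functions and gives the martingale $\int\langle\nabla f_i,dB\rangle$. At jump times, the compensator of the jump sum $\sum_n[f(\widetilde X_{\tau_n})-f(\widetilde X_{\tau_n-})]$ is $\int_0^t Kf(\widetilde X_s)\,ds$, since jumps arrive with the predictable intensity given by the rates. Because $f$ and $Kf$ are bounded and there are finitely many jumps, the difference is a true martingale up to localization at the times $\tau_n$. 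An alternative route is to approximate $f$ in $C^2$ by smooth functions and pass to the limit in the martingale identity using bounded convergence. Either route yields the statement.
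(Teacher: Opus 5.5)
Your proposal is correct and follows essentially the same route as the paper. Both use Poisson thinning at rate $\|q_\eps\|_\infty\max_i(-Q_{ii})$, obtain the Feller property from a bounded perturbation of the direct-sum Brownian semigroup, and derive the martingale identity from It\^o's formula between proposal times plus the compensator $\int_0^t Kf\,ds$ of the thinned jumps. The paper writes the perturbed semigroup out as an explicit Dyson--Phillips series where you cite the bounded perturbation theorem and Ethier--Kurtz, Ch.~4, Sec.~10; your optional $C^2$-approximation route is a harmless alternative to the pathwise argument the paper uses.
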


\begin{proof}
	Put
	\(
	\Lambda_\eps
	:=
	\norm{q_\eps}_\infty
	\max_{1\le i\le k}(-Q_{ii})<\infty.
	\)
	Let $N^\eps$ be a rate-$\Lambda_\eps$ Poisson process, independent of all Brownian drivers.  Starting from page $i$, run Brownian motion on
	$(\widetilde M_i,\widetilde g_i^\eps)$ until the first proposal time of $N^\eps$.  If the current point is outside the switching collar, keep the same page.  If the current point is $(i,y,s)$ with $|s|\le\eps$, choose $j\ne i$ with probability
	\(\lambda_{ij}^\eps(y,s)/\Lambda_\eps
	\)
	and keep page $i$ with the remaining probability.  After an accepted page change, restart Brownian motion on the new double from the point with the same collar coordinates $(y,s)$.  Iterate this construction at the successive proposal times.
	
	Each doubled page is compact, hence its Brownian motion is conservative.  The proposal process has finitely many points on bounded intervals almost surely, so the recursive construction is defined for all times and page jumps cannot accumulate.
	
	The process is Feller.  To make this explicit, let $S_t^\eps=S_{1,t}^\eps\oplus \cdots S_{m,t}^\eps$, where \(S_{i,t}^\eps C(\widetilde M_i) \to C(\widetilde M_i)\) denotes the Feller semigroup of the Brownian motion on page \(\widetilde{M}_i\).  Define the jump operator
	\(
	(B_\eps f)(i,y,s)
	:=
	\sum_{j\ne i}\lambda_{ij}^\eps(y,s)
	\bigl(f(j,\phi_{ij}(y,s))-f(i,y,s)\bigr),
	\)
	in the switching collar, where \(\phi_{ij}:(y,s)\mapsto (y,s)\); outside that collar the summand is defined to be zero.  Because $q_\eps$ is continuous, vanishes at the edge of its support, \(f\) is continuous and the maps \(\phi_{ij}\) are smooth, $B_\eps$ maps $C(\widetilde E)$ to itself and
	\(
	\norm{B_\eps}\le 2\Lambda_\eps.
	\)
	The transition semigroup of the interlaced process is therefore given by the uniformly convergent Dyson--Phillips expansion
	\[
	T_t^\eps
	=
	S_t^\eps
	+
	\sum_{m\ge1}
	\int_{0<s_1<\cdots<s_m<t}
	S_{s_1}^\eps B_\eps
	S_{s_2-s_1}^\eps B_\eps\cdots
	B_\eps S_{t-s_m}^\eps
	\,ds_1\cdots ds_m .
	\]
	Indeed, the $m$th term has operator norm at most
	$\norm{B_\eps}^m t^m/m!$.  Hence $T_t^\eps$ preserves $C(\widetilde E)$ and is strongly continuous at $t=0$.  It is the semigroup of the Poisson-thinning construction above, so $\widetilde X^\eps$ is Feller and therefore strong Markov.
	
	Conditional on the continuous path, thinning the proposal process gives compensator
	$\lambda_{ij}^\eps\,dt$ for $i\to j$ jumps.  It\^o's formula between proposal times and the compensated-jump formula at accepted transitions therefore give \eqref{eq:micro-generator} and the stated martingale relation.
\end{proof}


\subsection{Folding and the observable process}

Let
\[
E^\sharp=\bigsqcup_{i=1}^kM_i
\]
be the separated page space.  Let
\(
\varpi:\widetilde E\longrightarrow E^\sharp
\)
denote the pagewise folding map, whose restriction to $\widetilde M_i$ is $\varpi_i$.
Fold each double by
$\varpi_i$ and denote the resulting process by
\(
Y^\eps_t=(I^\eps_t,X^\eps_t)\in E^\sharp.
\)
Its quotient observation on the glued open book is
\(
Z_t^\eps:=\mathfrak q(Y_t^\eps)\in\cM.
\)

\begin{proposition}
	\label{prop:folded-generator}
	Let $(T_n^\eps)_{n\ge0}$ be the successive page-switching times.
	On the event
	\(
	\{T_n^\eps<\infty,\ I_{T_n^\eps}^\eps=i\},
	\)
	the restriction of $Z^\eps$ to
	$[T_n^\eps,T_{n+1}^\eps)$ is normally reflected Brownian motion on
	$(M_i,g_i^\eps)$, stopped at $T_{n+1}^\eps$.
	In the Fermi collar, a switch from page $i$ to page $j$ preserves
	$(y,r)$ and occurs at rate
	\(
	q_\eps(r)Q_{ij}.
	\)
	
	For every pagewise $C^2$ function
	$f=(f_1,\ldots,f_k)$ satisfying
	\(
	\partial_{\nu_i}f_i|_\Gamma=0\) for every \( i=1,\ldots,k,
	\)
	the process
	\[
	f(Y_t^\eps)-f(Y_0^\eps)
	-
	\int_0^t A_\eps f(Y_s^\eps)\,ds
	\]
	is a local martingale, where, in the Fermi collar,
	\[
	(A_\eps f)_i(y,r)
	=
	\frac12\Delta_{g_i^\eps}f_i(y,r)
	+
	q_\eps(r)\sum_{j=1}^kQ_{ij}f_j(y,r),
	\]
	while outside the switching collar
	\[
	(A_\eps f)_i=\frac12\Delta_{g_i^\eps}f_i.
	\]
\end{proposition}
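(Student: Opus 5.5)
The plan is to read both claims off \Cref{thm:micro-existence} by composing with the folding map $\varpi$. Throughout, observe that $\varpi$ only changes the normal coordinate through $s\mapsto|s|$ and never touches the page label or the footpoint $y$.

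\emph{Step 1: reflected motion between switches.} By \Cref{thm:micro-existence}, between consecutive accepted switching times $T_n^\eps$ and $T_{n+1}^\eps$, and on $\{I^\eps_{T_n^\eps}=i\}$, the process $\widetilde X^\eps$ is Brownian motion on $(\widetilde M_i,\widetilde g_i^\eps)$ started at $\widetilde X^\eps_{T_n^\eps}$, killed at an independent thinning time. The rate of this killing depends on the position only through $q_\eps(|s|)$, which is fold-invariant.

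The doubled metric $ds^2+h_i^\eps(y,|s|)$ is invariant under the reflection $\sigma:(y,s)\mapsto(y,-s)$, and $\sigma$ exchanges $M_i^\pm$. Hence $\sigma$ is an isometry of $\widetilde M_i$, and the law of Brownian motion is $\sigma$-invariant.

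The standard argument then applies. For $f\in C^2(M_i)$ with $\partial_{\nu_i}f=0$, the lift $\tilde f=f\circ\varpi_i$ is $C^2$ across the seam: indeed $\partial_s\tilde f$ is continuous and vanishes at $s=0$, and $\partial_{ss}\tilde f$ matches because $\partial_{rr}f$ appears with the same sign on both sides. Its Laplacian is $(\Delta_{g_i^\eps}f)\circ\varpi_i$, since $\kappa_i^\eps=0$ and $h_i^\eps=h$ for $r\le\rho_\eps$. So the folded path solves the submartingale (Stroock--Varadhan) problem for reflected Brownian motion on $(M_i,g_i^\eps)$ with Neumann test functions, and uniqueness of that problem identifies it as normally reflected Brownian motion.

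Alternatively, I would write Itô's formula for the distance $|s|$ via Tanaka near the seam, which exhibits the local-time term.

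The switching claim is immediate. A jump in $\widetilde E$ preserves $(y,s)$, hence preserves $\varpi$-coordinates $(y,|s|)$. Its rate $q_\eps(|s|)Q_{ij}=q_\eps(r)Q_{ij}$ is a function of the folded state.

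\emph{Step 2: the martingale relation.} Given a pagewise $C^2$ $f$ on $E^\sharp$ with $\partial_{\nu_i}f_i|_\Gamma=0$, set $\tilde f:=f\circ\varpi$. I will check that $\tilde f$ is pagewise $C^2$ on $\widetilde E$. Away from the seam this is clear. At the seam, $\partial_s\tilde f(y,0^\pm)=\pm\partial_rf_i(y,0)=0$, tangential derivatives agree, and second normal derivatives agree since $\partial_{ss}(f(y,|s|))=(\partial_{rr}f)(y,|s|)$ for $s\ne0$.

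Then \Cref{thm:micro-existence} gives that
\[
\tilde f(\widetilde X_t^\eps)-\tilde f(\widetilde X_0^\eps)-\int_0^t\widetilde A_\eps\tilde f(\widetilde X_u^\eps)\,du
\]
is a local martingale. Two identities finish the argument. First, off the seam $\Delta_{\widetilde g_i^\eps}\tilde f_i=(\Delta_{g_i^\eps}f_i)\circ\varpi_i$, because $\varpi_i$ is a local isometry on each copy; by continuity the identity also holds on the seam. Second, $q_\eps(|s|)\sum_jQ_{ij}\tilde f_j=\bigl(q_\eps(r)\sum_jQ_{ij}f_j\bigr)\circ\varpi$. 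Hence $\widetilde A_\eps\tilde f=(A_\eps f)\circ\varpi$, and since $Y^\eps=\varpi(\widetilde X^\eps)$ the stated local martingale follows. Outside the collar the switching term is absent on both sides.

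\emph{Main obstacle.} The delicate point is the $C^2$ regularity of the lifted function across the seam. This is exactly where the product form $g_i^\eps=dr^2+h$ on $[0,\rho_\eps]$ is used: with a nonproduct metric, odd-in-$s$ first-order terms such as $\kappa_i\partial_s$ would break the reflection symmetry and the smoothness of $\widetilde g_i^\eps$.

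A pagewise $C^2$ $f$ only yields $\tilde f\in C^2$, not $C^\infty$. If \Cref{thm:micro-existence} is only applied to smooth functions, I would approximate $f$ in $C^2$ by smooth Neumann functions (for instance by mollifying the even extension in $s$) and pass to the limit in the local martingale by localization and bounded convergence.
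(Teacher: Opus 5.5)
Your proposal is correct and follows essentially the same route as the paper. Both arguments lift Neumann test functions through the fold, check $C^2$ regularity across the seam, verify $\widetilde A_\eps(f\circ\varpi)=(A_\eps f)\circ\varpi$, apply \Cref{thm:micro-existence}, and identify the inter-switch motion through uniqueness for the Neumann problem. The paper additionally shows that the transition kernel descends through $\varpi$, so $Y^\eps$ is strong Markov, which the statement does not strictly require.

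One small imprecision: with only Neumann test functions you obtain the Neumann \emph{martingale} problem, as in the paper, not the Stroock--Varadhan submartingale problem. That problem's test class $\partial_{\nu}f\ge0$ would need your Tanaka variant, because lifts of such functions are not $C^2$ at the seam.
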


%

\begin{proof}
	For each $i$, let
	\(
	R_i:\widetilde M_i\to\widetilde M_i
	\)
	be the involution exchanging the two copies of $M_i$.  In the doubled
	Fermi collar it is given by
	\(
	R_i(y,s)=(y,-s).
	\)
	Since
	\(
	\widetilde g_i^\varepsilon
	=
	ds^2+h_i^\varepsilon(y,|s|),
	\)
	the map $R_i$ is an isometry.  Hence Brownian motion on
	$(\widetilde M_i,\widetilde g_i^\varepsilon)$ is invariant under $R_i$.
	
	The switching mechanism has the same symmetry.  Indeed, its rates depend
	on the signed normal coordinate only through $|s|$, and a switch
	$i\to j$ sends $(i,y,s)$ to $(j,y,s)$.  Thus reflecting before or after a
	switch gives the same result, and the law of the folded process starting
	from $\widetilde x\in\widetilde E$ depends only on
	$\varpi(\widetilde x)$.  Consequently the transition kernel of
	$\widetilde X^\varepsilon$ descends through $\varpi$ to a transition
	kernel on $E^\sharp$.
	
	If $\tau$ is a stopping time
	for the filtration of $Y^\varepsilon$, then it is also a stopping time
	for that of $\widetilde X^\varepsilon$.  Then, if \(\widetilde{P}_t^\eps\) and \(P_t^\eps\) denote the transition semigroups of \(\widetilde X^\eps\) and \(Y^\eps\) respectively, for every bounded measurable
	$f$ on $E^\sharp$, we have
	\(
	\mathbb E_{\widetilde x}[f(Y^\varepsilon_{\tau+t})\mid\mathcal F^{Y^\varepsilon}_\tau]
	=
	\mathbb E_{\widetilde x}
	[
	\widetilde P_t^\varepsilon(f\circ\varpi)
	(\widetilde X^\varepsilon_\tau)
	\,\Bigm|\,
	\mathcal F^{Y^\varepsilon}_\tau]  =
	P_t^\varepsilon f(Y^\varepsilon_\tau),
	\)
	where the last equality uses the fact that
	$\widetilde P_t^\varepsilon(f\circ\varpi)$ is constant on the fibres of
	$\varpi$. Thus \(Y^\eps\) is strong Markov as well.
	
	It remains to identify the generator.  Let
	$f=(f_1,\ldots,f_k)$ be pagewise $C^2$ and satisfy
	\(
	\partial_{\nu_i}f_i|_\Gamma=0 .
	\)
	Lift $f_i$ to the double by
	\(
	\widetilde f_i:=f_i\circ\varpi_i .
	\)
	In doubled Fermi coordinates,
	\(
	\widetilde f_i(y,s)=f_i(y,|s|).
	\)
	The Neumann condition implies that the one-sided first derivatives of \(\widetilde f\) are both null at $s=0$; since $f_i$ is $C^2$ up to the boundary and 
	$\widetilde f_i$ is even, its second one-sided derivatives agree as well and it is $C^2$ across the seam.  Moreover, on either side of
	the seam the folding map is a local isometry, and therefore
	\(
	\Delta_{\widetilde g_i^\varepsilon}\widetilde f_i(y,s)
	=
	\bigl(\Delta_{g_i^\varepsilon}f_i\bigr)(y,|s|).
	\)
	By continuity, the same identity holds at $s=0$.
	
	Applying Proposition~\ref{thm:micro-existence} to $\widetilde f=f\circ\varpi$
	therefore gives, in the doubled collar,
	\begin{equation}
		\label{eq:folded-generator}
		\begin{aligned}
			(\widetilde A_\varepsilon\widetilde f)_i(y,s)
			&=
			\frac12
			\Delta_{\widetilde g_i^\varepsilon}\widetilde f_i(y,s)
			+
			q_\varepsilon(r)
			\sum_{j=1}^k Q_{ij}\widetilde f_j(y,s)=
			\frac12\Delta_{g_i^\varepsilon}f_i(y,|s|)
			+
			q_\varepsilon(|s|)\sum_{j=1}^kQ_{ij}f_j
			(y,|s|).
		\end{aligned}
	\end{equation}
	Equivalently,
	\(
	\widetilde A_\varepsilon(f\circ\varpi)
	=
	(A_\varepsilon f)\circ\varpi,
	\)
	with
	\(
	(A_\varepsilon f)_i
	=
	\frac12\Delta_{g_i^\varepsilon}f_i
	+
	q_\varepsilon(r)(Qf)_i
	\)
	in the switching collar.  Outside that collar the switching rates vanish,
	so only the Laplace--Beltrami term remains.
	
	On the event
	\(
	\{T_n^\eps<\infty,\ I_{T_n^\eps}^\eps=i\},
	\)
	no page switch occurs on $[T_n^\eps,T_{n+1}^\eps)$.  Hence, for every
	$f_i\in C^2(M_i)$ satisfying
	$\partial_{\nu_i}f_i|_\Gamma=0$, \eqref{eq:folded-generator} gives
	\[
	f_i\!\left(X_{t\wedge T_{n+1}^\eps}^\eps\right)
	-f_i\!\left(X_{T_n^\eps}^\eps\right)
	-\frac12
	\int_{T_n^\eps}^{t\wedge T_{n+1}^\eps}
	\Delta_{g_i^\eps}f_i(X_s^\eps)\,ds
	\]
	as a local martingale, for $t\ge T_n^\eps$.
	Thus, up to the stopping time $T_{n+1}^\eps$, the folded spatial motion
	solves the Neumann martingale problem for
	$\frac12\Delta_{g_i^\eps}$ on $M_i$, and hence is normally reflected
	Brownian motion on $(M_i,g_i^\eps)$ stopped at $T_{n+1}^\eps$.
	The same statement holds for its quotient observation $Z^\eps$ while
	the active page is $i$.
\end{proof}

\section{Uniform tightness from deterministic starting points}
\label{sec:tightness}

We next prove \(C\)-tightness uniformly over all microscopic starting points.

\begin{lemma}
	\label{lem:localtime-estimate}
	For every $T<\infty$ there exist $\eps_0>0$ such that,
	for every $0<\eps\le\eps_0$, there is a continuous
	$(\mathcal F_t^{\widetilde X^\eps})$-adapted real semimartingale $R^\eps$ satisfying
	$R_t^\eps=r$ whenever $Y_t^\eps=(i,y,r)$ and $r<3r_0/4$, and admitting
	the decomposition
	\[
	R_t^\eps=R_0^\eps+M_t^\eps+A_t^\eps+K_t^\eps .
	\]
	Here $M^\eps$ is a continuous local martingale, $A^\eps$ has absolutely
	continuous paths, and $dK^\eps$ is carried by $\{R^\eps=0\}$. Moreover,
	uniformly in $0<\eps\le\eps_0$, the page label and the initial state,
	\begin{equation}
		\frac{d}{dt}\langle M^\eps\rangle_t\le C,
		\qquad
		\left|\frac{d}{dt}A_t^\eps\right|\le C,
		\label{eq:global-radial-bounds}
	\end{equation}
	while
	\begin{equation}
		\frac{d}{dt}\langle M^\eps\rangle_t=1
		\qquad\text{whenever }0\le R_t^\eps \le \frac12 r_0.
		\label{eq:unit-radial-qv}
	\end{equation}
	For every
	$(\mathcal F_t^\eps)$-stopping time $\tau\le T$, every
	$0<\delta\le1$, and every $0\le a<b\le r_0/2$, there exists $C_T<\infty$ such that
	\begin{equation}
		\label{eq:occupation-interval}
		\sup_{\tilde x\in\widetilde E}
		\E_{\tilde x}\int_\tau^{(\tau+\delta)\wedge T}
		\mathbf1_{\{a<R_s^\eps<b\}}\,ds
		\le C_T(b-a)\sqrt\delta.
	\end{equation}
	Moreover, for every nonnegative Borel function $\varphi$
	supported in $[0,r_0/2]$,
	\begin{equation}
		\label{eq:occupation-L1}
		\sup_{\tilde x\in\widetilde E}
		\E_{\tilde x}\int_0^T\varphi(R_s^\eps)\,ds
		\le
		C_T\int_0^{r_0/2}\varphi(r)\,dr .
	\end{equation}
\end{lemma}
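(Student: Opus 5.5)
We construct $R^\eps$ as a smooth truncation of the distance to the binding. Fix $\psi\in C^\infty([0,\infty))$ with $\psi(r)=r$ for $r\le 3r_0/4$, $\psi$ constant for $r\ge 7r_0/8$, and $0\le\psi'\le1$. Define $F=(F_i)$ pagewise by $F_i(y,r)=\psi(r)$ in the Fermi collar, extended by the constant outside it, and set $R^\eps_t:=F(Y^\eps_t)$. This definition does not depend on the page label, because the page switches preserve $(y,r)$. Consequently $R^\eps$ is continuous even at jump times. It is adapted because $Y^\eps$ is a function of $\widetilde X^\eps$.

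On the double, $\widetilde F_i(y,s)=\psi(|s|)$ is smooth away from $s=0$ and equals $|s|$ near the seam. Between switching times, Itô--Tanaka applied to Brownian motion on $(\widetilde M_i,\widetilde g_i^\eps)$ gives
\[
dR^\eps = \psi'(R^\eps)\,dW + \tfrac12\bigl(\psi''(R^\eps)+\psi'(R^\eps)\kappa_i^\eps\bigr)dt + dK^\eps,
\]
where $dK^\eps$ is the symmetric local time of the signed coordinate $s$ at $0$, and $W$ is the martingale part of $s$. Indeed $|\nabla s|_{\widetilde g}=1$ by the Gauss form $ds^2+h_i^\eps$, and $\Delta_{\widetilde g}s=\operatorname{sgn}(s)\kappa_i^\eps$ by \eqref{eq:fermi-Delta}. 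At a switching time, the jump term $q_\eps\sum_jQ_{ij}F_j$ vanishes because $F_j=F_i$. So the pieces concatenate into a single semimartingale. Its quadratic variation has density $\psi'(R)^2\le1$, which equals $1$ when $R\le r_0/2$. The drift is bounded by $\tfrac12\|\psi''\|_\infty+C$, using the uniform bound on $\kappa_i^\eps$ from \Cref{lem:metric-comparison}(iii). Together these give \eqref{eq:global-radial-bounds} and \eqref{eq:unit-radial-qv} with constants independent of $\eps$, the page label and the start.

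For the occupation bounds, we use the occupation-times formula on $\{R\le r_0/2\}$, where $d\langle M\rangle=dt$:
\[
\int_\tau^{(\tau+\delta)\wedge T}\mathbf 1_{\{a<R_s<b\}}\,ds=\int_a^b\bigl(L^x_{(\tau+\delta)\wedge T}-L^x_\tau\bigr)\,dx .
\]
Here $L^x$ is the semimartingale local time of $R^\eps$. It then suffices to show $\sup_{x\in(0,r_0/2)}\E[L^x_{(\tau+\delta)\wedge T}-L^x_\tau]\le C_T\sqrt\delta$. Tanaka's formula for $(R-x)^+$ between $\tau$ and $\tau'=(\tau+\delta)\wedge T$ gives
\[
\tfrac12(L^x_{\tau'}-L^x_\tau)=(R_{\tau'}-x)^+-(R_\tau-x)^+-\int_\tau^{\tau'}\mathbf 1_{\{R>x\}}(dM+dA+dK).
\]
The $dK$ term vanishes since $x>0$ and $K$ only charges $\{R=0\}$. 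The remaining terms are bounded in expectation by $\E|R_{\tau'}-R_\tau|+C\delta$. Note that $\E|M_{\tau'}-M_\tau|\le\sqrt{C\delta}$ by \eqref{eq:global-radial-bounds}.

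The main obstacle is bounding $\E|K_{\tau'}-K_\tau|$, since $K$ enters the increment $R_{\tau'}-R_\tau$. We handle it with the Skorokhod representation: $K$ is the minimal regulator of $R_0+M+A$ at $0$. This holds near the binding, where $R=|s|$ is genuinely a reflected process. Minimality gives $K_{\tau'}-K_\tau\le\sup_{[\tau,\tau']}|(M+A)_t-(M+A)_\tau|$. By BDG, the expectation of this supremum is at most $C(\sqrt\delta+\delta)$. Combining, every term is $O(\sqrt\delta)$ for $\delta\le1$, which yields \eqref{eq:occupation-interval}. The same argument with $\tau=0$ and $\delta=T$ gives $\sup_x\E L^x_T\le C_T$. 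Integrating this against $\varphi$ through the occupation formula yields \eqref{eq:occupation-L1}.
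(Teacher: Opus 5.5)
Your proposal is correct and follows essentially the paper's route. Both arguments use a label-independent truncated normal distance, It\^o's formula between switches (you apply Tanaka on the double, where the paper applies reflected It\^o on the folded page), Skorokhod control of the regulator, Tanaka's formula for the local times, and the occupation-density formula on $\{R^\eps\le r_0/2\}$. Two details: your bound $K^\eps_{\tau'}-K^\eps_\tau\le\sup_{[\tau,\tau']}|(M^\eps+A^\eps)_t-(M^\eps+A^\eps)_\tau|$ is the correct one, whereas the paper's \eqref{eq:K-estimate} drops the martingale part. Also, the martingale term in your It\^o--Tanaka expansion should carry a factor $\sgn(s)$, which does not change the quadratic variation.
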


\begin{proof}
	Choose a smooth function
	\(
	\rho:M_i\to[0,\infty)
	\)
	on every page, using the same construction for all $i$, such that
	$\rho(y,r)=r$ for $0\le r\le3r_0/4$, $\rho$ is constant outside a slightly
	larger collar, and $\rho^{-1}(\{0\})=\Gamma$.  Because the page-switching jumps
	preserve $(y,r)$ and occur inside $r\le\eps<3r_0/4$, the scalar process
	\(
	R_t^\eps:=\rho(X_t^\eps)
	\)
	is continuous across every page switch.
	
	On an interval on which the page label is $i$, reflected It\^o's
	formula gives
	\[
	dR_t^\eps
	=
	dM_t^\eps
	+
	\frac12\Delta_{g_i^\eps}\rho_i(X_t^\eps)\,dt
	+
	\partial_{\nu_i}\rho_i(X_t^\eps)\,dK_t^\eps,
	\]
	where $M^{\eps,n}$ is a continuous local martingale satisfying
	\(
	d\langle M^{\eps,n}\rangle_t
	=
	|\nabla\rho|_{g_i^\eps}^2(X_t^\eps)\,dt,
	\)
	and $K^{\eps,n}$ is the reflection regulator.
	Since $dK^\eps$ is supported on $\Gamma$ and \(\rho=r\) near \(\Gamma\),
	$\partial_{\nu_i}\rho_i=1$ there. Therefore, the last term is $dK_t^\eps$.
	At every page switch the collar coordinates $(y,r)$ are preserved and
	$\rho_i=\rho_j=r$ on the switching region, so $R^\eps$ has no jump.
	Summing the identities over the finitely many inter-switch intervals
	therefore yields
	\[
	R_t^\eps
	=
	R_0^\eps
	+
	M_t^\eps
	+
	A_t^\eps
	+
	K_t^\eps,
	\]
	where 
	$A_t^\eps
	:=
	\/2\int_0^t
	\Delta_{g_{I_s^\eps}^\eps}
	\rho_{I_s^\eps}(X_s^\eps)\,ds$.    Moreover,
	$dK^\eps$ is increases only on $\{R^\eps=0\}$.
	
	The functions $\rho_i$ have uniformly bounded first derivatives, and \eqref{eq:metric-uniform-equivalence},
	gives
	\(
	\frac{d}{dt}\langle M^\eps\rangle_t=|\nabla\rho_{I_t^\eps}|_{g_{I_t^\eps}^\eps}^2(X_t^\eps)\le C,
	\) uniformly in \(i\), \(\eps\) and \(\tilde x.\)
	
	For the drift we distinguish two regions.  On the collar on which
	$\rho_i=r$, the Fermi representation
	$g_i^\eps=dr^2+h_i^\eps(y,r)$ gives
	\(
	\Delta_{g_i^\eps}\rho_i
	=
	\Delta_{g_i^\eps}r
	=
	\partial_r\log J_i^\eps.
	\)
	By \eqref{eq:fermi-uniform-coefficients},
	$\partial_r\log J_i^\eps$ is uniformly bounded.  Away from this collar, $g_i^\eps=g_i$, while $\rho_i$ and
	$g_i$ are smooth on a compact set.  Hence
	$\Delta_{g_i^\eps}\rho_i$ is uniformly bounded there as well.
	Consequently,
	\(|\frac{d}{dt}A_t^\eps|\le C,\)
	with $C$ independent of $0<\eps\le\eps_0$, of the page label and of the
	initial state, giving \eqref{eq:global-radial-bounds}.
	On the smaller collar
	$R^\eps<r_0/2$ we have $\rho=r$, which gives \eqref{eq:unit-radial-qv}

	Since $R^\eps$ is the one-dimensional
	Skorokhod reflection at the origin of the continuous driving path
	$R_0^\eps+M^\eps+A^\eps$, applying the Lipschitz estimate for the
	Skorokhod map to the increments after $\tau$ together with \eqref{eq:global-radial-bounds} and Burkolder-Davis-Gundy (BDG) inequality give, with
	$\theta_\delta=(\tau+\delta)\wedge T$ and similarly for \(\theta_u\),
	\begin{gather}
		\sup_{0\le u\le\delta}
		\left|
		R^\eps_{\theta_u}-R^\eps_\tau
		\right|
		\le
		C
		\sup_{0\le u\le\delta}
		\left|
		M^\eps_{\theta_u}-M^\eps_\tau
		\right|
		\le C_T\sqrt \delta, \label{eq:radial-modulus}\\
		K^\eps_{\theta_\delta}-K^\eps_\tau \le \sup_{0\le u\le\delta}
		\left|
		A^\eps_{\theta_u}-A^\eps_\tau
		\right| \le C \delta. \label{eq:K-estimate}
	\end{gather}
	
	%

	Fix $a\in(0,r_0/2]$.  Tanaka's formula for $|R^\eps-a|$ between
	$\tau$ and $\theta_\delta$ gives
	\begin{align*}
		L_{\theta_\delta}^a(R^\eps)-L_\tau^a(R^\eps)
		&=
		|R_{\theta_\delta}^\eps-a|-|R_\tau^\eps-a|
		-
		\int_\tau^{\theta_\delta}
		\sgn(R_s^\eps-a)\,dM_s^\eps
		\\
		&\quad
		-
		\int_\tau^{\theta_\delta}
		\sgn(R_s^\eps-a)\,dA_s^\eps
		-
		\int_\tau^{\theta_\delta}
		\sgn(R_s^\eps-a)\,dK_s^\eps .
	\end{align*}
	Since $a>0$ and $dK^\eps$ is carried by $\{R^\eps=0\}$,
	$\sgn(R_s^\eps-a)=-1$ on the support of $dK^\eps$, so
	\(
	-
	\int_\tau^{\theta_\delta}
	\sgn(R_s^\eps-a)\,dK_s^\eps
	=
	K_{\theta_\delta}^\eps-K_\tau^\eps.
	\)
	Moreover,
	\(
	\left|
	|R_{\theta_\delta}^\eps-a|-|R_\tau^\eps-a|
	\right|
	\le
	|R_{\theta_\delta}^\eps-R_\tau^\eps|.
	\)
	Using
	\eqref{eq:global-radial-bounds}, \eqref{eq:radial-modulus}, \eqref{eq:K-estimate} and BDG for the martingale integral,
	we obtain
	\begin{equation}
		\\
		\E_{\tilde x}\!\left[
		L^a_{(\tau+\delta)\wedge T}(R^\eps)-L^a_\tau(R^\eps)
		\right]
		\le C_T\sqrt\delta. \label{eq:local-time-bound}
	\end{equation}
	
	For $0<a<b<r_0/2$, \eqref{eq:unit-radial-qv} and the occupation-density formula therefore yield
	\[
	\int_\tau^\theta
	\mathbf1_{\{a<R_s^\eps<b\}}\,ds
	=
	\int_\tau^\theta
	\mathbf1_{\{a<R_s^\eps<b\}}\,
	d\langle M^\eps\rangle_s=
	\int_a^b
	\left(
	L_\theta^r(R^\eps)-L_\tau^r(R^\eps)
	\right)\,dr.
	\]
	Taking expectations and applying
	\eqref{eq:local-time-bound} proves
	\eqref{eq:occupation-interval} for $0<a<b<r_0/2$.
	The endpoint cases follow by monotone convergence.  In particular,
	there is no Lebesgue-time atom at the boundary.  Indeed,
	\eqref{eq:unit-radial-qv} also holds at $R^\eps=0$ and \(R^\eps=r_0/2\), so the
	occupation-density formula gives
	\(
	\int_0^T
	\mathbf1_{\{R_s^\eps=0\}}\,ds
	=
	\int_0^T
	\mathbf1_{\{R_s^\eps=r_0/2\}}\,ds
	=
	0 
	\) almost surely. Finally, cover $[0,T]$ by finitely many deterministic intervals of
	length at most one and apply \eqref{eq:occupation-interval} on each
	one.  Absorbing the number of intervals into $C_T$, we obtain
	\[
	\sup_{\tilde x}
	\E_{\tilde x}
	\int_0^T
	\mathbf1_{\{a<R_s^\eps<b\}}\,ds
	\le
	C_T(b-a),
	\qquad
	0\le a<b\le r_0/2,
	\]
	uniformly for $0<\eps\le\eps_0$.
	The preceding interval estimate, together with
	$\mu_{\eps,\tilde x}^T(\{0\})=0$, implies that the expected occupation measure satisfies
	\[
	\mu_{\eps,\tilde x}^T(B)
	:=
	\E_{\tilde x}
	\int_0^T
	\mathbf1_{\{R_s^\eps\in B\}}\,ds.
	\le
	C_T\,\operatorname{Leb}(B).
	\]
	Hence, for every
	nonnegative Borel function $\varphi$ supported in $[0,r_0/2]$,
	\[
	\sup_{\tilde x}
	\E_{\tilde x}
	\int_0^T
	\varphi(R_s^\eps)\,ds
	=
	\sup_{\tilde x}
	\int_{[0,r_0/2]}
	\varphi(r)\,\mu_{\eps,\tilde x}^T(dr)
	\le
	C_T\int_0^{r_0/2}\varphi(r)\,dr,
	\]
	uniformly for $0<\eps\le\eps_0$.
\end{proof}

Let \(\mathcal{L}_x(X)\) denote the law of the process \(X\) started at the point \(x\).

\begin{lemma}
	\label{lem:C-tightness}
	Fix $T<\infty$.  The family
	\[
	\left\{\mathcal L_{\tilde x_\eps}(Z^\eps):
	\eps>0,\ \tilde x_\eps\in\widetilde E\right\}
	\]
	is tight in $D([0,T],\cM)$, and every subsequential limit is supported on
	$C([0,T],\cM)$.
\end{lemma}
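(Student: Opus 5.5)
The plan is to use the Aldous--Rebolledo criterion, or equivalently a direct modulus-of-continuity estimate, combined with the observation that $Z^\eps$ has continuous paths. First, $Z^\eps$ is continuous. Between switching times it is reflected Brownian motion on a page, by \Cref{prop:folded-generator}. At a switch the Fermi coordinates $(y,r)$ are preserved, and $r\le\eps$ there; hence the jump in $\cM$ satisfies $d_{\cM}((i,y,r),(j,y,r))\le 2r\le 2\eps$. Since jumps do occur, $Z^\eps$ is not literally continuous, but its maximal jump size is at most $2\eps$. Once tightness is established, this bound forces every subsequential limit to lie in $C([0,T],\cM)$ (Ethier--Kurtz, Chapter~3, Theorem~10.2). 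Because $\cM$ is compact, compact containment is automatic. It therefore suffices to control the modulus $w'$ uniformly in $\eps$ and in the starting point $\tilde x_\eps$.

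For the modulus I would use Aldous' criterion: for stopping times $\tau\le T$ and $\delta\downarrow0$,
\[
\sup_{\eps,\tilde x}\E_{\tilde x}\Bigl[1\wedge d_{\cM}\bigl(Z^\eps_\tau,Z^\eps_{(\tau+\delta)\wedge T}\bigr)\Bigr]\to0 .
\]
Alternatively, it is enough to show that for each pagewise-smooth function $f$ from a countable separating family in $\mathcal G$, the real processes $f(Z^\eps)$ satisfy Aldous' condition (Ethier--Kurtz, Chapter~3, Theorem~9.1). I prefer the $f$-route. Take $f\in\mathcal G$ that is constant in $r$ on the collar $r<r_0/4$, i.e.\ $f_i(y,r)=\phi(y)\chi(r)+(\text{terms supported away from }\Gamma)$, with the same $\phi$ on all pages. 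Such $f$ are continuous on $\cM$ and separate points. This family suffices: together with the radial function $\rho$ it generates the topology, since points near $\Gamma$ are distinguished by $(y,r)$ and the page label only matters when $r>0$, where page-supported bumps do the job.

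For such $f$, the function $f\circ\mathfrak q$ is equal on all pages at each collar point, so switching produces no jump in $f(Z^\eps)$, and $Qf=0$ in the collar. Moreover $\partial_{\nu_i}f_i=0$, so \Cref{prop:folded-generator} applies and gives
\[
f(Z^\eps_t)=f(Z^\eps_0)+\int_0^t\tfrac12\Delta_{g^\eps_{I_s}}f_{I_s}\,ds+N_t^f .
\]
Here $|\Delta_{g_i^\eps}f_i|\le C$ uniformly by \eqref{eq:fermi-uniform-coefficients}, and $d\langle N^f\rangle_t=|\nabla f|^2_{g^\eps}\,dt\le C$ by \eqref{eq:metric-uniform-equivalence}. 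BDG then yields $\E|f(Z^\eps_{\tau+\delta})-f(Z^\eps_\tau)|\le C(\delta+\sqrt\delta)$ uniformly in $\eps$ and $\tilde x$. For the radial coordinate, \eqref{eq:radial-modulus} of \Cref{lem:localtime-estimate} gives the same bound directly. For page-interior bumps supported in $r>r_0/8$, the switching term vanishes once $\eps<r_0/8$. The Neumann condition holds trivially there, so the same BDG estimate applies.

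The main obstacle is the page label. A function distinguishing pages must be supported away from $\Gamma$, yet the label may switch rapidly, at rate of order $\Theta_\eps/\eps$, while the process sits near $\Gamma$. This is harmless only because such switches move the point by at most $2\eps$ in $d_{\cM}$, which is the reason for working with $Z^\eps$ rather than $Y^\eps$. I would make this precise as follows. Tightness of $f(Z^\eps)$ for the countable family $\{\rho\}\cup\{\phi\chi\}\cup\{\text{page bumps}\}$ gives tightness of the vector process in $D([0,T],\R^{\mathbb N})$. Since $\mathfrak q$-images are determined continuously by these coordinates on the compact $\cM$, this transfers to $D([0,T],\cM)$ (Ethier--Kurtz, Chapter~3, Corollary~9.3, or Jakubowski's criterion). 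Combined with the $2\eps$ jump bound, this yields $C$-tightness.
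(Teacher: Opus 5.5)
\textbf{There is a genuine gap in the reconstruction step.} Your route differs from the paper's. You test $Z^\eps$ against functions that are page-blind near $\Gamma$, so switching is invisible, add $\rho$ and page bumps, and rebuild tightness in $\cM$ from this countable family. The paper instead composes with the collapse map $F_\eps$ and embeds the collar via $(i,y,r)\mapsto(y,re_i)$. As written, your rebuilding step does not go through.

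\textbf{The family does not separate points.} Take $i\neq j$ and $0<r\le r_0/8$. The points $(i,y,r)$ and $(j,y,r)$ have the same value of $\rho$. They have the same value of every $\phi\chi$-type function, since these are page-independent on $\{r<r_0/4\}$. Both are zeros of every bump supported in $\{r>r_0/8\}$. So $\Psi:z\mapsto(f_k(z))_k$ is not injective, and ``$\mathfrak q$-images are determined continuously by these coordinates'' is false. It fails exactly where the page label matters, which is the obstacle you yourself identified: a path flipping between pages at fixed height $r=r_0/16$ is invisible to all your coordinates. The fix is to use bumps supported in $\{r>c_m\}$ for a sequence $c_m\downarrow0$. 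Once $\eps<c_m$, such a bump vanishes on the whole switching region on every page. Then $b(Z^\eps)$ is continuous, with drift $\frac12\Delta_{g_i}b$ and bounded quadratic variation. The Aldous bound therefore holds, but with $m$-dependent constants and only for $\eps<c_m$. This suffices along sequences $\eps_n\to0$, which is also all the paper's final comparison $\sup_{t\le T}d_\cM(Z^\eps_t,\bar Z^\eps_t)\le C\eps$ delivers.

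\textbf{Coordinatewise to joint tightness.} This passage needs an argument you do not give. Tightness of each $f_k(Z^\eps)$ in $D([0,T],\R)$ does not in general imply tightness of the vector in $D([0,T],\R^{\mathbb N})$, because two coordinates can jump at distinct but merging times. This is why the criteria in Ethier--Kurtz, Chapter~3, Section~9 require a family dense in $C(\cM)$, or at least closed under addition, rather than merely separating. Your setting is rescued by continuity. For $\eps$ below the relevant thresholds, every coordinate you use is a continuous process, hence $C$-tight, and so finite subvectors are $C$-tight. With the metric $\sum_k2^{-k}(|x_k-x_k'|\wedge1)$ on $\R^{\mathbb N}$, the coordinates beyond $K$ contribute at most $2^{-K}$ to the modulus $w'$. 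This gives tightness of $\Psi(Z^{\eps_n})$. With the corrected family, $\Psi$ is a homeomorphism of the compact space $\cM$ onto a closed subset, so $Z^{\eps_n}$ is tight, and your $2\eps_n$ jump bound then gives $C$-tightness.

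\textbf{Comparison once repaired.} Your argument then avoids the singular drift $\frac12\rho_\eps''$ of size $1/\eps$, which the paper must control through \eqref{eq:occupation-interval}. The paper's collapse-and-embed construction, in turn, distinguishes pages at every height $r>0$ with a single continuous semimartingale and needs no separating family or thresholds.
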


\begin{proof}
	We prove tightness using Aldous' criterion, but, before applying it, we remove the high jump frequency completely.  Choose a nondecreasing $\eta\in C^\infty([0,\infty);[0,1])$ such that
	$\eta(s)=0$ for $s\le1$ and $\eta(s)=1$ for $s\ge2$, and define
	$\rho_\eps(r):=r\,\eta(r/\eps)$. Then
	$\rho_\eps(r)=0$ for $r\le\eps$ and $\rho_\eps(r)=r$ for
	$r\ge2\eps$. Moreover, for a constant $C$ independent of $\eps$,
	$|\rho_\eps(r)-r|\le C\eps$, $0\le\rho_\eps'(r)\le C$, and
	$|\rho_\eps''(r)|\le C/\eps$, with
	$\operatorname{supp}\rho_\eps''\subset(\eps,2\eps)$.
	
	Let $F_\eps:\cM\to \cM$ move points in the collar along the normal geodesics
	according to $r\mapsto\rho_\eps(r)$ and be the identity outside the
	collar. Set
	$\bar Z^\eps:=F_\eps(Z^\eps)$. 
	
	We first check that the rapid label process produces no hidden term in
	$\bar Z^\eps$.  In an interface chart use the embedding
	\(
	(i,y,r)\longmapsto(y,re_i)\in\R^{n-1+k},
	\) where the \(e_i\)'s are the standard basis vectors of \(\R^k\),
	and write the embedded collapsed normal component as
	\(
	V_t^\eps:=\rho_\eps(R_t^\eps)e_{I_t^\eps}.
	\)
	At every switching time $t$, $R_t^\eps\le\eps$, hence
	$\rho_\eps(R_t^\eps)=0$.  The product formula for a continuous semimartingale
	and the pure-jump process $e_{I^\eps}$ therefore has
	\begin{equation}
		\rho_\eps(R_{t-}^\eps)\Delta e_{I_t^\eps}=0
		\label{eq:V-jump}
	\end{equation}
	at every label jump.  Thus $V^\eps$ is continuous and, between label jumps, the one-dimensional
	It\^o's formula gives
	\begin{equation}
		\begin{array}{r c c c c c c}
			dV_t^\eps
			&=&
			e_{I_t^\eps}\rho_\eps'(R_t^\eps)\,dM_t^\eps
			&+&
			e_{I_t^\eps}\rho_\eps'(R_t^\eps)\,dA_t^\eps
			&+&
			e_{I_t^\eps}\frac12\rho_\eps''(R_t^\eps)\,
			d\langle M^\eps\rangle_t
			\\[1ex]
			&=:&
			dN_t^\eps
			&+&
			dB_t^\eps
			&+&
			dD_t^\eps .
		\end{array}
		\label{eq:collapsed-radial-vector}
	\end{equation}
	There is no \(dK^\eps_t\)-term because
	$\rho_\eps'(0)=0$, and there is no switching compensator because of \eqref{eq:V-jump}.
	
	For every \((\mathcal F_t^{\widetilde X^\eps})\)-stopping time \(\tau\), writing \(\theta_\delta=(\tau+\delta)\wedge T\), equation \eqref{eq:global-radial-bounds} and \(\rho_\eps'\le C\) give, \[\frac{d}{dt}\langle N^\eps\rangle_t\le C\qquad|B_{\theta_\delta}^\eps-B_\tau^\eps|\le C\delta.\]
	For the \(D_t^\eps\) term, equations \eqref{eq:unit-radial-qv} and \eqref{eq:occupation-interval}, $|\rho_\eps''(r)|\le C/\eps$ and
	$\operatorname{supp}\rho_\eps''\subset(\eps,2\eps)$ give
	\begin{align}
		&\E\int_\tau^{(\tau+\delta)\wedge T}
		|\rho_\eps''(R_s^\eps)|\,d\langle M^\eps\rangle_s
		\le
		\frac{C}{\eps}
		\E\int_\tau^{(\tau+\delta)\wedge T}
		\mathbf1_{\{\eps<R_s^\eps<2\eps\}}\,ds
		\le C_T\sqrt\delta.
		\label{eq:collapse-singular-drift}
	\end{align}
	Applying BDG to \(N_t^\eps\), we get
	\(\E_{\tilde x}[|V_{\theta_\delta}^\eps-V_\tau^\delta|]\le C_T(\sqrt \delta +\delta)\)
	which in turn yields Aldous's condition: \[\Pp_{\tilde x}(|V_{\theta_\delta}^\eps-V_\tau^\delta|>\gamma)\to 0,\quad\text{for }\gamma>0\].
	
	We next control the tangential coordinates.  Fix  relatively compact
	coordinate nested charts $U_0\Subset U_1\Subset\Gamma$, with coordinates
	$y=(y^1,\ldots,y^{n-1})$ defined on \(U_1\), and choose $0<r_1<r_0$.  The corresponding
	interface neighborhoods in the glued space are
	\[
	\mathcal U_i
	:=
	\mathfrak q\left(
	\bigsqcup_{i=1}^k
	\Phi_i(U_i\times[0,r_1))\right
	), \qquad i=1,2.
	\]
	For $a=1,\ldots,n-1$, the common tangential coordinate defines a
	continuous function $\mathsf y^a:\mathcal U_1\to\R$ by
	\(
	\mathsf y^a\bigl(\mathfrak q(\Phi_i(y,r))\bigr):=y^a .
	\)
	This is well defined at the binding because the boundary copies of
	$\Gamma$ are identified through the same coordinate $y$.  Let
	\(
	\sigma^\eps:=\inf\{t\ge0:Z_t^\eps\notin\mathcal U_0\}
	\)
	and set
	\(
	Z_t^{a,\eps}:=\mathsf y^a(Z_t^\eps),\) for \( t<\sigma^\eps.
	\)
	Since the collapse map $F_\eps$ changes only the normal
	coordinate,
	\(
	\mathsf y^a(\bar Z_t^\eps)=\mathsf y^a(Z_t^\eps),
	\)
	so it suffices to estimate $Z^{a,\eps}$.
	
	By Proposition~\ref{prop:folded-generator}, between successive
	page-switching times, on page $i$,
	$Z^\eps$ is normally reflected Brownian motion on $(M_i,g_i^\eps)$.
	Applying It\^o's formula to the tangential coordinate $y^a$ and
	concatenating across the switching times gives
	\begin{equation}
		Z_{t\wedge\sigma^\eps}^{a,\eps}
		=
		Z_0^{a,\eps}
		+
		N_{t\wedge\sigma^\eps}^{a,\eps}
		+
		\frac12
		\int_0^{t\wedge\sigma^\eps}
		\Delta_{g_{I_s^\eps}^\eps}y^a(Z_s^\eps)\,ds ,
		\label{eq:tangential-decomposition}
	\end{equation}
	where $N^{a,\eps}$ is a continuous local martingale.  There are no
	additional terms in \eqref{eq:tangential-decomposition}: a page switch
	preserves $(y,r)$ and hence produces no jump in $Z^{a,\eps}$, while
	normal reflection produces no regulator term because
	$\partial_r y^a=0$.
	
	By \eqref{eq:tangential-metric-uniform-equivalence},
	\[
	\frac{d}{dt}\langle N^{a,\eps}\rangle_t
	=
	(h_{I_t^\eps}^\eps)^{-1}(dy^a,dy^a)
	\le
	C h^{-1}(dy^a,dy^a)
	\le C
	\]
	before $\sigma^\eps$.
	
	Since $y^a$ is independent of $r$,
	\(
	\Delta_{g_i^\eps}y^a
	=
	\Delta_{h_i^\eps(r)}y^a .
	\)
	After extending \(y^a\) smoothly to \(\Gamma\) with a cutoff \(\chi \in C_c^\infty(U_1), \ \chi \equiv 1\) on \(U_0\), by \eqref{eq:fermi-uniform-coefficients}, \(\Delta_{h_i^\eps(r)}y^a \le C_a\) on \(\mathcal U_0\).
	
	Now, similar estimates to the normal direction give Aldous' condition for $\mathsf{y}_a(\bar Z^\eps)$ on the interface chart.
	
	On an interior chart whose closure is disjoint from $\Gamma$, for all
	sufficiently small $\eps$ one has $g_i^\eps=g_i$ and no switching
	occurs, so the same stopped diffusion estimate gives the same
	condition directly.
	
	Now a localization agument lets us pass to the global Aldous' condition
	\[
	\lim_{\delta\downarrow0}
	\sup_{0<\varepsilon\le\varepsilon_0}
	\sup_{\tilde x\in\widetilde E}
	\sup_{\tau\le T}
	\mathbb P_{\tilde x}
	\left(
	d_{\mathcal{M}}\!\left(
	\bar Z^\varepsilon_{(\tau+\delta)\wedge T},
	\bar Z^\varepsilon_\tau
	\right)>\gamma
	\right)
	=0, \quad \gamma>0.
	\]

	Since \(\mathcal{M}\) is compact, the compact-containment condition is automatic.
	Aldous' criterion therefore yields tightness of
	\(\{\bar Z^\varepsilon\}\) in \(D([0,T],\cM)\), uniformly over the
	microscopic starting point.  Moreover, every \(\bar Z^\varepsilon\)
	has continuous paths: the continuous diffusion and reflection parts
	remain continuous, while every page switch occurs with
	\(R^\varepsilon_t\le\varepsilon\), where
	\(\rho_\varepsilon(R^\varepsilon_t)=0\), and hence produces no jump
	after applying \(F_\varepsilon\).  Since \(C([0,T],\cM)\) is closed in
	the Skorokhod \(J_1\) topology, every weak subsequential limit of
	\(\bar Z^\varepsilon\) is supported on \(C([0,T],\cM)\).  Thus
	\(\bar Z^\varepsilon\) is \(C\)-tight.
	
	Finally, by the definition of \(F_\varepsilon\),
	\(
	\sup_{x\in \cM} d_\cM(F_\varepsilon x,x)\le C\varepsilon,
	\)
	and therefore, pathwise,
	\(
	\sup_{t\le T}
	d_\cM(Z^\varepsilon_t,\bar Z^\varepsilon_t)
	\le C\varepsilon.
	\)
	In particular,
	\[
	d_{J_1}(Z^\varepsilon,\bar Z^\varepsilon)
	\le
	\sup_{t\le T}
	d_\cM(Z^\varepsilon_t,\bar Z^\varepsilon_t)
	\to0 .
	\]
	Hence \(Z^\varepsilon\) and \(\bar Z^\varepsilon\) have the same
	subsequential weak limits.  Since the latter family is \(C\)-tight,
	the family \(Z^\varepsilon\) is \(C\)-tight as well.
\end{proof}

\section{The non-symmetric boundary-layer corrector}
\label{sec:corrector}

The key step is a corrector that removes the reflected-boundary local-time term and simultaneously cancels the first-order jump drift.  This is where irreducibility, rather than reversibility, enters.

On the Hilbert space
\(
V_\eps:=H^1((0,2\eps);\mathsf H_0),
\)
define the bilinear form
\[
B_\eps(v,w)
:=
\frac12\int_0^{2\eps}\ip{v'}{w'}_\pi\,dr
-\int_0^\eps q_\eps(r)\ip{Qv}{w}_\pi\,dr.
\]
Let
\(
\delta_\eps:=2\eps+\Theta_\eps^{-1}.
\label{eq:deltaeps}
\)
Extend $q_\eps$ by zero on $(\eps,2\eps]$.

\begin{lemma}
	\label{lem:corrector}
	Let $a\in\mathsf H_0$.  There is a unique
	\(
	v^\eps_a\in H^1((0,2\eps);\mathsf H_0)
	\)
	satisfying, weakly,
	\begin{equation}
		\frac12(v^\eps_a)''(r)
		+q_\eps(r)Qv^\eps_a(r)
		=
		-r q_\eps(r)Qa,
		\qquad 0<r<2\eps,
		\label{eq:corrector-ode}
	\end{equation}
	with Neumann data \((v^\eps_a)'(0)=-a,\) and \((v^\eps_a)'(2\eps)=0.\)
	Moreover there is a constant $C$, depending only on $Q$ and $\pi$, such that
	\begin{gather}
		B_\eps(v,w)\le
		C\norm{a}_\pi^2\bigl(\delta_\eps+\Xi_\eps\bigr),
		\label{eq:corrector-energy}\\
		\sup_{0\le r\le2\eps}\norm{v^\eps_a(r)}_\pi
		\le
		C\norm{a}_\pi
		\sqrt{\delta_\eps}\,
		\bigl(\sqrt{\delta_\eps}+\sqrt{\Xi_\eps}\bigr),
		\label{eq:corrector-sup}\\
		\int_0^{2\eps}\norm{(v^\eps_a)'(r)}_\pi\,dr
		\le
		C\norm{a}_\pi\sqrt{\eps}\,
		\bigl(\sqrt{\delta_\eps}+\sqrt{\Xi_\eps}\bigr).
		\label{eq:corrector-L1der}
	\end{gather}
	In particular, under Assumption \ref{ass:main-scaling}, all three quantities on the right of
	\eqref{eq:corrector-energy}--\eqref{eq:corrector-L1der} tend to zero.
\end{lemma}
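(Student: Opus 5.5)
The plan is to treat \eqref{eq:corrector-ode} variationally on $V_\eps$. Testing against $w\in V_\eps$ and integrating by parts with the Neumann data $(v^\eps_a)'(0)=-a$, $(v^\eps_a)'(2\eps)=0$, the weak formulation reads
\[
B_\eps(v,w)=\frac12\ip{a}{w(0)}_\pi+\int_0^\eps rq_\eps(r)\ip{Qa}{w(r)}_\pi\,dr=:\ell_\eps(w),\qquad w\in V_\eps .
\]
The projection onto $\mathsf H_0$ is automatic: $a\in\mathsf H_0$, and $\ip{Qu}{\bone}_\pi=\pi^\top Qu=0$, so $Q$ maps $\R^k$ into $\mathsf H_0$. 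Hence the equation is consistent within $\mathsf H_0$-valued functions. For existence and uniqueness I will use Lax--Milgram; the non-symmetry of $B_\eps$ is harmless because only coercivity is needed.

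\textbf{Coercivity.} By \Cref{lem:Q-dissipation},
\[
B_\eps(v,v)=\frac12\norm{v'}_{L^2}^2+\int_0^\eps q_\eps\,\cD_Q(v)\,dr\ge\frac12\norm{v'}^2_{L^2}+\lambda_Q\int_0^\eps q_\eps\norm{v}_\pi^2\,dr .
\]
This must control $\norm{v}_{L^2}$, which is the main obstacle: the potential term only sees the region where $q_\eps$ has mass. For fixed $\eps$ I will argue by contradiction. If $B_\eps(v,v)=0$ then $v$ is constant and $\Theta_\eps\norm{v}^2=0$, so $v=0$ once $\Theta_\eps>0$. Compactness of $H^1\hookrightarrow C$ then gives coercivity with an $\eps$-dependent constant, which suffices for uniqueness.

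For the quantitative bound I will use the Poincaré-type inequality
\[
\norm{v}_\infty^2\le C\Bigl(\eps\norm{v'}_{L^2}^2+\Theta_\eps^{-1}\int_0^\eps q_\eps\norm v^2_\pi\Bigr)\le C\delta_\eps B_\eps(v,v).
\]
To prove it, write $v(r)=v(s)+\int_s^r v'$, average against the probability measure $q_\eps\,ds/\Theta_\eps$ on $(0,\eps)$, and apply Cauchy--Schwarz, using $|r-s|\le2\eps$. This inequality is the key estimate.

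\textbf{Energy bound.} Test with $w=v=v^\eps_a$. For the boundary term, $|\ip{a}{v(0)}_\pi|\le\norm a_\pi\norm v_\infty$. For the jump term,
\[
\int rq_\eps\norm{Qa}\norm{v}\le C\norm a\Bigl(\int r^2q_\eps\Bigr)^{1/2}\Bigl(\int q_\eps\norm v^2\Bigr)^{1/2}\le C\norm a\sqrt{\Xi_\eps}\sqrt{B_\eps(v,v)} .
\]
Combining the two with the Poincaré inequality gives $B_\eps(v,v)\le C\norm a(\sqrt{\delta_\eps}+\sqrt{\Xi_\eps})\sqrt{B_\eps(v,v)}$. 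Hence $B_\eps(v,v)\le C\norm a^2(\delta_\eps+\Xi_\eps)$, which is \eqref{eq:corrector-energy}, read with $v=w=v^\eps_a$.

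\textbf{Sup and $L^1$ bounds.} The sup bound \eqref{eq:corrector-sup} follows from $\norm v_\infty^2\le C\delta_\eps B_\eps(v,v)$. The derivative bound \eqref{eq:corrector-L1der} follows from Cauchy--Schwarz on $(0,2\eps)$:
\[
\int\norm{v'}\le\sqrt{2\eps}\norm{v'}_{L^2}\le C\sqrt\eps\sqrt{B_\eps(v,v)} .
\]
Under \Cref{ass:main-scaling}, $\delta_\eps=2\eps+\Theta_\eps^{-1}\to0$ and $\Xi_\eps\to0$, so all three right-hand sides tend to zero.

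Finally, the weak equation recovers the ODE with its Neumann data. The distributional equation gives $v''\in L^\infty$, so $v\in W^{2,\infty}$ and $v'$ has well-defined endpoint values. Comparing the boundary terms in the integration by parts then yields $v'(0)=-a$ and $v'(2\eps)=0$.
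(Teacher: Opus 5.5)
Your proposal is correct and follows the paper's proof essentially step for step. The shared steps are: Lax--Milgram on $V_\eps$; the bound obtained by averaging $v(r)=v(s)+\int_s^r v'$ against $q_\eps\,ds/\Theta_\eps$; testing with $w=v$, with the Cauchy--Schwarz split $rq_\eps=(r\sqrt{q_\eps})\sqrt{q_\eps}$; and Cauchy--Schwarz for the $L^1$ derivative bound. You state the averaging bound in sup-norm form, so it gives \eqref{eq:corrector-sup} directly and also gives explicit fixed-$\eps$ coercivity, which makes your compactness argument unnecessary. Your remark that $Q$ maps $\R^k$ into $\mathsf H_0$, so that $\mathsf H_0$-valued test functions recover the full ODE, is a point the paper leaves implicit.
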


\begin{proof}
	For fixed $\eps$, this is coercive on $V_\eps$ with respect to the usual $H^1$ norm.  To show this, write
	\begin{equation}
		\label{eq:averaging}
		\norm{v(0)}_\pi
		\le \norm{v(r)}_\pi+\sqrt{2\eps}\norm{v'}_{L^2(0,2\eps;\pi)}.
	\end{equation}
	Taking the square and averaging against the probability measure $q_\eps(r)\,dr/\Theta_\eps$ gives
	\begin{equation}
		\label{eq:v0-estimate}
		\norm{v(0)}_\pi^2\le 2\Theta_\eps^{-1}\int_0^\eps q_\eps(r)\|v(r)\|_\pi^2\, dr +4\eps\int_0^{2\eps}\|v'(r)\|_\pi^2\,dr
		\le (2\lambda_Q^{-1}\Theta_\eps^{-1}+8\eps) B_\eps(v,v),
	\end{equation}
	where in the second inequality we have used \eqref{eq:Q-gap}.
	Then
	\[
	\int_0^{2\eps}\norm{v(r)}_\pi^2\,dr
	\le
	4\eps\norm{v(0)}_\pi^2
	+8\eps^2\int_0^{2\eps}\norm{v'(r)}_\pi^2\,dr
	\le C_\eps B_\eps(v,v).
	\]
	Thus $B_\eps(v,v)\ge C_\eps^{-1}\norm{v}_{H^1}^2$. Because \(H_0\) is finite-dimensional, \(Q\) is  bounded linear operator. Since $q_\eps$ is bounded, the bilinear form is then bounded.  Hence Lax--Milgram applies to the weak formulation of \eqref{eq:corrector-ode}, that is
	\begin{equation}
		B_\eps(v,w)
		=
		\frac12\ip{a}{w(0)}_\pi
		+
		\int_0^\eps r q_\eps(r)\ip{Qa}{w(r)}_\pi\,dr.
		\label{eq:corrector-weak}
	\end{equation}
	Because \(r q_\eps(r)\) is bounded for fixed \(\eps\), denoting \(\ell_\eps(w)\) the left hand side in \eqref{eq:corrector-weak}, we have
	\begin{equation}
		\label{eq:lhs-estimate}
		\ell_\eps(w) \le \|a\|_\pi \|w(0)\|_\pi+\|Qa\|_\pi\int_0^\eps rq_\eps(r)\|w(r)\|_\pi \,dr \le C_\eps  \|w\|_{H^1}
	\end{equation}
	Both \(q_\eps Q v\) and \(r q_\eps Q a\) are in \(L^2\). Hence \(v'' \in L^2\) and $v \in H^2$.  Taking tast functions compactly supported in \([0,2\eps]\), integration by parts identifies \eqref{eq:corrector-ode} and
	the Neumann data.
	
	We now derive estimates uniform in $\eps$. 
	
	Taking $w=v$ in \eqref{eq:corrector-weak}, \eqref{eq:lhs-estimate} gives
	\[
	B_\eps(v,v) \le \|a\|_\pi\|v(0)\|_\pi+ C\|a\|_\pi\int_0^\eps rq_\eps(r)\|w(r)\|_\pi \, dr.
	\]
	The bound \eqref{eq:v0-estimate} gives \(\|v(0)\|_\pi\le C \sqrt{\delta_\eps B_\eps(v,v)}\). 
	For the second term, write \(rq_\eps \|v\|_\pi =(r \sqrt{q_\eps})(\sqrt{q_\eps}\|v\|_\pi)\). Cauchy-Schwartz then gives 
	\[\int_0^\eps rq_\eps(r)\|w(r)\|_\pi \, dr \le \left(\int_0^\eps r^2q_\eps(r) dr\right)^{1/2}\left(\int_0^\eps q_\eps(r) \|v(r)\|_\pi^2dr\right)^{1/2}\le \sqrt{\Xi_\eps}+\lambda_Q^{-1/2}\sqrt{B_\eps(v,v)}.\]
	Therefore \(B_\eps(v,v)\le C\|a\|_\pi(\sqrt{\delta_\eps}+\sqrt{\Xi_\eps})B_\eps(v,v)\). If $B_\eps(v,v)>0$, division by its square root and squaring yield \eqref{eq:corrector-energy}. The zero-energy case is trivial.
	
	Finally,
	\[
	\sup_{r\le2\eps}\norm{v(r)}_\pi
	\le
	\norm{v(0)}_\pi
	+\sqrt{2\eps}\norm{v'}_{L^2} \le \left(C\sqrt{\delta_\eps}+2\sqrt{\eps}\right)\sqrt{B_\eps(v,v)}.
	\]
	Now, using \(\sqrt \eps \le 2^{-1/2}\sqrt{\delta_\eps}\) and
	\eqref{eq:corrector-energy}, we get \eqref{eq:corrector-sup}.
	Cauchy--Schwarz with a factor of \(1\) directly gives \eqref{eq:corrector-L1der}.
\end{proof}

By Lemma~\ref{lem:corrector}, for every \(a\in H_0\) there is a
unique corrector \(v_a^\eps\). Let \(a_1, a_2 \in H_0\) and \(w:=\alpha v_{a_1}^\eps+\beta v_{a_2}^\eps\). Then \(w\) solves \(\frac12(w)''(r)
+q_\eps(r)Qw(r)
=
-r q_\eps(r)Q(\alpha a_1+\beta a_2), \ 0 <r<2\eps\) with \(w(0)=-\alpha a_1-\beta a_2\) and \(w(2\eps)=0\). Therefore, the solution
map \(a\mapsto v_a^\eps\) is linear. Hence, for each
\(r\in[0,2\eps]\), we may define the linear operator
\[
K_\eps(r):H_0\to H_0,
\qquad
K_\eps(r)a:=v_a^\eps(r).
\]

\begin{lemma}
	\label{lem:corrector-operator}
	Let \(\eta_\eps:=\delta_\eps+\Xi_\eps\). Then
	there is $C<\infty$, independent of $\eps$, such that
	\begin{gather}
		\int_0^{2\eps}\norm{K_\eps'(r)}_{\operatorname{op},\pi}^2\,dr
		\le C\eta_\eps,
		\label{eq:Kop-der2}\\
		\int_0^{2\eps}\norm{K_\eps'(r)}_{\operatorname{op},\pi}\,dr
		\le C\sqrt{\eps\eta_\eps},
		\label{eq:Kop-der1}\\
		\sup_{0\le r\le2\eps}\norm{K_\eps(r)}_{\operatorname{op},\pi}
		\le
		C\sqrt{\delta_\eps}
		\bigl(\sqrt{\delta_\eps}+\sqrt{\Xi_\eps}\bigr),
		\label{eq:Kop-sup}\\
		\int_0^\eps
		q_\eps(r)
		\sup_{\norm{a}_\pi\le1}\cD_Q(K_\eps(r)a)\,dr
		\le C\eta_\eps.
		\label{eq:Kop-jump}
	\end{gather}
	Consequently, if $a:\Gamma\to\mathsf H_0$ is bounded and \(v^\eps(y,r):=v^\eps_{a(y)}(r)\), then
	\begin{gather}
		\int_0^{2\eps}
		\sup_{y\in\Gamma}
		\norm{\partial_rv^\eps(y,r)}_\pi^2\,dr
		\le
		C\norm{a}_{L^\infty(\Gamma;\pi)}^2\eta_\eps,
		\label{eq:v-uniform-der2}\\
		\int_0^{2\eps}
		\sup_{y\in\Gamma}
		\norm{\partial_rv^\eps(y,r)}_\pi\,dr
		\le
		C\norm{a}_{L^\infty(\Gamma;\pi)}
		\sqrt{\eps\eta_\eps},
		\label{eq:v-uniform-der1}\\
		\int_0^\eps q_\eps(r)
		\sup_{y\in\Gamma}\cD_Q(v^\eps(y,r))\,dr
		\le
		C\norm{a}_{L^\infty(\Gamma;\pi)}^2\eta_\eps.
		\label{eq:v-uniform-jump}
	\end{gather}
\end{lemma}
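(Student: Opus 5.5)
The plan is to reduce every operator-norm statement to the vector estimates of Lemma~\ref{lem:corrector}, applied to the vectors of an orthonormal basis. By linearity of $a\mapsto v^\eps_a$, the map $K_\eps(r)$ is well defined on the finite-dimensional space $\mathsf H_0$. Fix an orthonormal basis $b_1,\dots,b_{k-1}$ of $\mathsf H_0$ for $\ip{\cdot}{\cdot}_\pi$. For $a=\sum_m c_m b_m$ with $\norm{a}_\pi\le1$, Cauchy--Schwarz in $m$ gives
\[
\norm{K_\eps(r)a}_\pi^2\le\sum_m\norm{v^\eps_{b_m}(r)}_\pi^2,
\qquad
\norm{K_\eps'(r)a}_\pi^2\le\sum_m\norm{(v^\eps_{b_m})'(r)}_\pi^2 .
\]
Hence
\[
\norm{K_\eps(r)}_{\operatorname{op},\pi}^2\le\sum_m\norm{v^\eps_{b_m}(r)}_\pi^2,
\]
and the analogous bound holds for $K_\eps'$. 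Here $K_\eps'$ exists a.e.\ as an operator, since each $v^\eps_{b_m}\in H^2$. With this reduction, \eqref{eq:Kop-sup} follows at once from \eqref{eq:corrector-sup} applied to each $b_m$, at the cost of a factor $\sqrt{k-1}$.

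For \eqref{eq:Kop-der2}, integrate the bound on $\norm{K_\eps'}^2$ over $(0,2\eps)$. It is then enough to show $\int_0^{2\eps}\norm{(v^\eps_b)'}_\pi^2\,dr\le C\eta_\eps$ for each unit $b$. This follows from the energy bound \eqref{eq:corrector-energy} with $v=w=v^\eps_b$, because $B_\eps(v,v)$ dominates $\frac12\int\norm{v'}_\pi^2$. Indeed,
\[
B_\eps(v,v)=\frac12\int_0^{2\eps}\norm{v'}_\pi^2\,dr+\int_0^\eps q_\eps\,\cD_Q(v)\,dr,
\]
and both terms are nonnegative by Lemma~\ref{lem:Q-dissipation}. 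Once \eqref{eq:Kop-der2} is in hand, \eqref{eq:Kop-der1} is Cauchy--Schwarz on an interval of length $2\eps$.

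The jump bound \eqref{eq:Kop-jump} needs an extra step, and I expect it to be the one point requiring care. The problem is that the supremum over $a$ sits inside the integral, so we cannot simply pick the worst $a$ once. I will use the edge representation \eqref{eq:DQ-edge}. It exhibits $\cD_Q(u)$ as a positive semidefinite quadratic form $\norm{Du}^2$, where $D:\R^k\to\R^{E}$ is the weighted incidence map $(Du)_{ij}=\sqrt{\pi_iQ_{ij}/2}\,(u_j-u_i)$. Then
\[
\sup_{\norm{a}_\pi\le1}\cD_Q(K_\eps(r)a)=\norm{DK_\eps(r)}_{\operatorname{op}}^2\le\sum_m\norm{DK_\eps(r)b_m}^2=\sum_m\cD_Q(v^\eps_{b_m}(r)),
\]
again by Cauchy--Schwarz in the coefficients, or by bounding the operator norm by the Hilbert--Schmidt norm. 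Integrating against $q_\eps$, each summand is at most $B_\eps(v^\eps_{b_m},v^\eps_{b_m})\le C\eta_\eps$, by the nonnegativity of the two parts of $B_\eps$ noted above.

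Finally, for the $y$-dependent statements I use $v^\eps(y,r)=K_\eps(r)a(y)$, which holds by linearity. This gives the pointwise bounds
\[
\sup_y\norm{\partial_rv^\eps(y,r)}_\pi\le\norm{K_\eps'(r)}_{\operatorname{op},\pi}\norm{a}_{L^\infty(\Gamma;\pi)}
\]
and
\[
\sup_y\cD_Q(v^\eps(y,r))\le\norm{a}_{L^\infty}^2\sup_{\norm{b}_\pi\le1}\cD_Q(K_\eps(r)b),
\]
the latter by homogeneity of degree two. Integrating in $r$ and invoking \eqref{eq:Kop-der2}, \eqref{eq:Kop-der1} and \eqref{eq:Kop-jump} yields \eqref{eq:v-uniform-der2}, \eqref{eq:v-uniform-der1} and \eqref{eq:v-uniform-jump}. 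Measurability of the suprema in $y$ causes no trouble, since the bounds are dominated by measurable functions of $r$ alone.
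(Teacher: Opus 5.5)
Your proposal is correct and follows essentially the same route as the paper. You reduce each operator bound to the vector estimates of Lemma~\ref{lem:corrector} via a $\pi$-orthonormal basis of $\mathsf H_0$ (a Hilbert--Schmidt bound), and your weighted incidence map $D$ plays the role of the paper's $S^{1/2}$ in the jump estimate; your explicit splitting $B_\eps(v,v)=\frac12\int_0^{2\eps}\norm{v'}_\pi^2\,dr+\int_0^\eps q_\eps\,\cD_Q(v)\,dr$ into two nonnegative parts is exactly what the paper uses implicitly when it invokes \eqref{eq:corrector-energy}.
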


\begin{proof}
	Let
	$e_1,\dots,e_{k-1}$ be a $\pi$-orthonormal basis of $\mathsf H_0$.  Since the
	operator norm is bounded by the Hilbert--Schmidt norm,
	\[
	\norm{K_\eps'(r)}_{\operatorname{op},\pi}^2
	\le
	\sum_{\ell=1}^{k-1}
	\norm{K_\eps'(r)e_\ell}_\pi^2=\sum_{\ell=1}^{k-1}\|(v_{e_\ell}^\eps)'(r)\|_\pi^2.
	\]
	Integrating and applying \eqref{eq:corrector-energy} to each \(v_{e_\ell}^\eps\) gives
	\[\int_0^{2\eps}\norm{K_\eps'(r)}_{\operatorname{op},\pi}^2\,dr \le \sum_{\ell=1}^{k-1}\int_0^{2\eps}\|(v_{e_\ell}^\eps)'(r)\|_\pi^2\, dr \le C(k-1)\eta_\eps.\]
	Now, equation \eqref{eq:Kop-der1} follows from Cauchy--Schwarz with a factor of \(1\).
	Likewise, \eqref{eq:corrector-sup} applied to the basis vectors and finite
	dimensionality give \eqref{eq:Kop-sup}.
	
	For \eqref{eq:Kop-jump}, note that $\cD_Q$ is a nonnegative quadratic form on
	$\mathsf H_0$.  Let $S$ be its positive self-adjoint representing operator with
	respect to $\ip{\cdot}{\cdot}_\pi$, so that
	$\cD_Q(u)=\norm{S^{1/2}u}_\pi^2$.  Then
	\[
	\sup_{\norm{a}_\pi\le1}\cD_Q(K_\eps(r)a)
	=
	\norm{S^{1/2}K_\eps(r)}_{\operatorname{op},\pi}^2\le \sum_{\ell=1}^{k-1}\norm{S^{1/2}K_\eps(r)e_\ell}_{\pi}^2
	\le
	\sum_{\ell=1}^{k-1}
	\cD_Q(K_\eps(r)e_\ell).
	\]
	Integrating against $q_\eps(r)\,dr$ and using
	\eqref{eq:corrector-energy} again proves \eqref{eq:Kop-jump}.
	
	Finally $v^\eps(y,r)=K_\eps(r)a(y)$.  The three displayed consequences follow
	immediately from the operator bounds.
\end{proof}

\begin{corollary}
	\label{cor:corrector-parameter}
	Let $a:\Gamma\to\mathsf H_0$ be smooth and put
	$v^\eps(y,r):=v^\eps_{a(y)}(r)$.  For every tangential differential operator
	$D_y:=\sum_{|\alpha|\le m}c_\alpha(y)\partial_y^\alpha$, the estimates
	\eqref{eq:v-uniform-der2}--\eqref{eq:v-uniform-jump} hold with
	$v^\eps$ replaced by $D_yv^\eps$ and $a$ replaced by $D_ya$.
\end{corollary}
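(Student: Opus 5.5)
The corollary rests on one observation: the solution map $a\mapsto v_a^\eps$ is linear and independent of $y$. Indeed, by the discussion preceding Lemma~\ref{lem:corrector-operator}, $v^\eps(y,r)=K_\eps(r)a(y)$, where $K_\eps(r):\mathsf H_0\to\mathsf H_0$ is a linear operator depending only on $r$ and $\eps$, and not on $y$. I therefore plan to commute $D_y$ with $K_\eps(r)$ and then apply the $y$-uniform operator bounds of Lemma~\ref{lem:corrector-operator} to the smooth boundary datum $D_ya$ in place of $a$.

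\textbf{Step 1: commuting $D_y$ with $K_\eps(r)$.} Since $\mathsf H_0$ is finite dimensional, fix the $\pi$-orthonormal basis $e_1,\dots,e_{k-1}$ and write $a(y)=\sum_\ell a^\ell(y)e_\ell$ with smooth scalar coefficients $a^\ell$. Then
\[
v^\eps(y,r)=\sum_{\ell}a^\ell(y)\,v^\eps_{e_\ell}(r),
\]
a finite sum of products of a smooth function of $y$ with a fixed $H^2((0,2\eps);\mathsf H_0)$ function of $r$. Tangential derivatives therefore pass through termwise. Because the coefficients $c_\alpha$ depend only on $y$, this gives
\[
D_yv^\eps(y,r)=\sum_\ell (D_ya^\ell)(y)\,v^\eps_{e_\ell}(r)=K_\eps(r)\,(D_ya)(y).
\]
The same identity holds for the $r$-derivative: $\partial_rD_yv^\eps=K_\eps'(r)D_ya$. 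Moreover, $D_ya$ takes values in $\mathsf H_0$, since $\ip{D_ya}{\bone}_\pi=D_y\ip{a}{\bone}_\pi=0$, and $D_ya$ is smooth, hence bounded on the compact manifold $\Gamma$.

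\textbf{Step 2: applying the operator bounds.} With this representation, the proofs of \eqref{eq:v-uniform-der2}--\eqref{eq:v-uniform-jump} apply verbatim with $a$ replaced by $D_ya$. For instance,
\[
\sup_y\norm{\partial_rD_yv^\eps(y,r)}_\pi\le\norm{K_\eps'(r)}_{\operatorname{op},\pi}\,\norm{D_ya}_{L^\infty(\Gamma;\pi)},
\]
and integrating in $r$ with \eqref{eq:Kop-der2} or \eqref{eq:Kop-der1} yields the first two estimates. For the third, note that $\cD_Q(K_\eps(r)D_ya(y))\le \sup_{\norm{b}_\pi\le1}\cD_Q(K_\eps(r)b)\,\norm{D_ya}_{L^\infty}^2$ by homogeneity. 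Integrating against $q_\eps\,dr$ and using \eqref{eq:Kop-jump} then completes the argument.

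\textbf{Expected obstacle.} The only point requiring care is justifying the interchange of $D_y$ with the weak solution map. This is immediate from the finite-basis representation in Step 1, which avoids any appeal to differentiating the Lax--Milgram solution with respect to a parameter. I expect no genuine difficulty here.
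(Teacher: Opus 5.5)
Your proposal is correct and follows the paper's proof. The paper likewise writes $D_yv^\eps(y,r)=K_\eps(r)D_ya(y)$, notes that $D_yv^\eps$ is therefore the corrector associated with $D_ya$, and applies Lemma~\ref{lem:corrector-operator}; your finite-basis expansion and your check that $D_ya$ is $\mathsf H_0$-valued spell out the interchange of $D_y$ with $K_\eps(r)$ that the paper states in one line.
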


\begin{proof}
	We write \(D_y v^\eps(y,r)= K_\eps(r)D_ya(y)\).  Hence
	$D_yv^\eps$ is exactly the corrector associated with $D_ya$.  Apply
	\Cref{lem:corrector-operator}.
\end{proof}

\begin{remark}[Why the first moment disappears]
	If one applies the generator directly to a smooth common-trace function, then
	\[
	Qf(y,r)=rQa(y)+O(r^2),
	\]
	so an $O(\int r q_\eps)$ term appears.  The inhomogeneous term
	$-r q_\eps Qa$ in \eqref{eq:corrector-ode} cancels this contribution exactly.
	Thus no assumption on the first moment
	$\int_0^\eps r q_\eps(r)\,dr$ is needed.  The only uncancelled Taylor remainder is quadratic and is controlled by $\Xi_\eps$.
\end{remark}

\subsection{Corrected Kirchhoff test functions}

Fix $f\in\cC_K$ and let
\(
\phi(y):=f_i(y,0),
\
a_i(y):=\partial_{\nu_i}f_i(y,0).
\)
The common trace makes $\phi$ independent of $i$, while the Kirchhoff condition says
\(
\sum_i\pi_i a_i(y)=0,
\)
so $a(y)\in\mathsf H_0$.  Taylor expansion in Fermi coordinates gives
\[
f(y,r)
=
\phi(y)\bone+r a(y)+r^2 b(y,r),
\]
where $b$ is smooth and uniformly bounded together with its tangential derivatives.

For $0\le r\le2\eps$, set
\(
v^\eps(y,r):=v^\eps_{a(y)}(r).
\)
Tangential derivatives are controlled by \Cref{cor:corrector-parameter}.
Choose a fixed $\zeta\in C^\infty([0,r_0);[0,1])$ with
\(
\zeta=1\text{ on }[0,r_0/3],
\) and \(
\zeta=0\text{ on }[r_0/2,r_0).
\)
Since $q_\eps=0$ on $(\eps,2\eps]$, equation
\eqref{eq:corrector-ode} gives $(v^\eps)''=0$ there; together with
$(v^\eps)'(2\eps)=0$ this implies
\(
(v^\eps)'=0
\text{ on }[\eps,2\eps].
\)
Define on page $i$
\[
\psi_i^\eps(y,r)
:=
\begin{cases}
	v_i^\eps(y,r),&0\le r\le2\eps,\\[1mm]
	\zeta(r)v_i^\eps(y,2\eps),&r>2\eps
\end{cases}
\]
in the collar, and set $\psi_i^\eps=0$ outside the support of $\zeta$.
Then
\begin{equation}
	f_i^\eps:=f_i+\psi_i^\eps
	\label{eq:corrected-test}
\end{equation}
satisfies
\begin{equation}
	\norm{f^\eps-f}_\infty\longrightarrow0,
	\label{eq:test-uniform}
\end{equation}
by finite-dimensional norm equivalence.

\begin{lemma}
	\label{lem:corrected-admissible}
	For every fixed $0<\eps<r_0/6$ and every $f\in\cC_K$, the corrected
	function $f^\eps$ is pagewise $C^2$ and satisfies
	\(
	\partial_{\nu_i}f_i^\eps|_\Gamma=0.
	\)
\end{lemma}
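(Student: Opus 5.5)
The plan is to verify the two claims directly from the construction \eqref{eq:corrected-test}. We split the page into three regions: the inner layer $0\le r\le 2\eps$, the matching region $2\eps<r<r_0/2$ where $\psi_i^\eps=\zeta(r)v_i^\eps(y,2\eps)$, and the exterior where $\psi_i^\eps=0$. Since $f_i\in C^\infty(M_i)$, it suffices to show that $\psi_i^\eps$ is $C^2$ on $M_i$ and that $\partial_r\psi_i^\eps(y,0)=-a_i(y)$. Combined with $\partial_{\nu_i}f_i(y,0)=\partial_rf_i(y,0)=a_i(y)$, this yields the Neumann condition.

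\emph{Regularity in $r$ inside the layer.} By \Cref{lem:corrector}, $v^\eps_{a}\in H^2((0,2\eps);\mathsf H_0)$ solves \eqref{eq:corrector-ode}. Since $q_\eps$ is continuous and bounded for fixed $\eps$, the right-hand side $-2q_\eps Qv^\eps-2rq_\eps Qa$ is continuous, so $v^\eps\in C^2([0,2\eps])$ classically. The Neumann data $(v^\eps_a)'(0)=-a$ then gives $\partial_r\psi_i^\eps(y,0)=-a_i(y)$. We will also use the observation made just before the definition of $\psi^\eps$: $(v^\eps)'=(v^\eps)''=0$ on $[\eps,2\eps]$, so $v^\eps$ is constant there.

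\emph{Matching at $r=2\eps$.} Because $\eps<r_0/6$, we have $2\eps<r_0/3$, so $\zeta\equiv1$ on a neighbourhood of $[0,2\eps]$. Thus $\psi^\eps$ equals the constant (in $r$) value $v^\eps(y,2\eps)$ on both sides of $r=2\eps$ near that point, and all its $r$-derivatives of order one and two vanish from both sides. The exterior piece is smooth since $\zeta$ is. Hence $\psi^\eps$ is $C^2$ in $r$ across $r=2\eps$ and $r=r_0/2$.

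\emph{Dependence on $y$ (the main point to check).} Here we use linearity: $v^\eps(y,r)=K_\eps(r)a(y)$ with $a$ smooth, as in \Cref{cor:corrector-parameter}. Each $K_\eps(r)$ is a fixed linear map on the finite-dimensional space $\mathsf H_0$, and $r\mapsto K_\eps(r)$ is $C^2$ on $[0,2\eps]$. To see this, apply the previous step to the basis correctors $v^\eps_{e_\ell}$. It follows that mixed derivatives $\partial_y^\alpha\partial_r^j\psi^\eps=K_\eps^{(j)}(r)\partial_y^\alpha a(y)$ exist and are continuous for $j\le2$ and every $\alpha$, so $\psi_i^\eps$ is $C^2$ (indeed $C^\infty$ in $y$) in Fermi coordinates up to $r=0$. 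Outside the collar $\psi_i^\eps\equiv0$, so $f_i^\eps$ is pagewise $C^2$ on $M_i$. The only real obstacle is upgrading the weak $H^2$ solution to a classical $C^2$ one, and continuity of $q_\eps$ is exactly what makes this work.
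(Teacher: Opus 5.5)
Your proposal is correct and follows the paper's own argument. The ingredients match one for one:
\begin{itemize}
\item $C^2$ regularity in $r$ comes from the corrector ODE, using that $q_\eps$ is continuous.
\item Smooth $y$-dependence follows from linearity, $v^\eps(y,r)=K_\eps(r)a(y)$.
\item The pieces match in $C^2$ at $r=2\eps$ because $(v^\eps)'=0$ on $[\eps,2\eps]$ and $\zeta\equiv1$ near $2\eps$ when $\eps<r_0/6$.
\item The Neumann data $(v^\eps)'(0)=-a$ cancel $\partial_{\nu_i}f_i=a_i$.
\end{itemize}
Your extra details (upgrading the $H^2$ solution to a classical $C^2$ one, and joint continuity of mixed derivatives via the basis correctors) just flesh out steps the paper states in a single line.
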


\begin{proof}
	Because $q_\eps$ is bounded and continuous, \(Q\) and \(a(y)\) are constant in \(r\), the corrector
	equation \eqref{eq:corrector-ode} implies
	$v^\eps(y,\cdot)\in C^2$ in the normal variable; smooth dependence on $y$
	follows from linearity in the smooth parameter $a(y)$.  The extension at
	$r=2\eps$ is $C^2$ because $q_\eps=0$ on $(\eps,2\eps]$ and
	$(v^\eps)'=0$ there, while the fixed cutoff $\zeta$ is identically one in a
	neighborhood of $2\eps$ for $\eps<r_0/6$.  The boundary condition
	$(v^\eps)'(0)=-a$ gives \(
	\partial_{\nu_i}f_i^\eps|_\Gamma
	=
	a_i+(v_i^\eps)'(0)=0.
	\)
\end{proof}

Lemma \ref{lem:corrected-admissible} establishes that \(f^\eps\) is an admissible smooth test function for the folded
microscopic generator \(A_\eps\), and Proposition~\ref{prop:folded-generator} gives
\[
M_t^{\eps,f}:=f^\eps(Y_t^\eps)-f^\eps(Y_0^\eps)
-\int_0^tA_\eps f^\eps(Y_s^\eps)\,ds
\]
as a local martingale. For every fixed \(\eps>0\) and \(T<\infty\), both
\(f^\eps\) and \(A^\eps f^\eps\) are bounded on the finite union of compact pages. Hence
\[
\sup_{t\le T}|M_t^{\eps,f}|
\le
2\|f^\eps\|_\infty
+T\|A^\eps f^\eps\|_\infty
<\infty.
\]
Thus \(M^{\eps,f}\) is a true martingale on every finite time interval.

\section{Convergence from deterministic starting points}
\label{sec:main-limit}

The next proposition is the analytic core of the convergence proof.

\begin{proposition}
	\label{prop:generator-convergence}
	Let $f\in\cC_K$ and let $f^\eps$ be the corrected test function
	\eqref{eq:corrected-test}.  Under Assumption \ref{ass:main-scaling}, for every $T<\infty$,
	\begin{equation}
		\sup_{\tilde x\in\widetilde E}
		\E_{\tilde x}
		\int_0^T
		\left|
		A_\eps f^\eps(Y_s^\eps)
		-
		A_0f(Z_s^\eps)
		\right|\,ds
		\longrightarrow0.
		\label{eq:integrated-generator}
	\end{equation}
%
\end{proposition}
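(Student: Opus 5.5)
We split $\cM$ into three regions and estimate $A_\eps f^\eps-A_0f$ pointwise on each. Each pointwise bound is then converted into a time-integrated bound by the occupation estimate \eqref{eq:occupation-L1} of \Cref{lem:localtime-estimate}. The three regions are the outer region $r\ge r_0/3$ (and the page interiors beyond the collar), the intermediate collar $2\eps\le r\le r_0/3$, and the switching layer $0\le r\le 2\eps$.

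\emph{Outer region.} Here $q_\eps=0$ and, for $\eps$ small, $g_i^\eps=g_i$. Also $\psi_i^\eps=\zeta(r)v_i^\eps(y,2\eps)$, and its pagewise Laplacian is bounded by $C\sup_y(\norm{v^\eps}+\norm{\partial_yv^\eps}+\norm{\partial_y^2v^\eps})(y,2\eps)$. By \eqref{eq:Kop-sup} and \Cref{cor:corrector-parameter} this tends to $0$ uniformly. The difference $A_\eps f^\eps-A_0 f$ therefore converges to $0$ uniformly there, with no need for occupation estimates.

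\emph{Intermediate collar $2\rho_\eps\le r$.} The metric is unchanged, so the same argument applies for $r\ge 2\eps$.

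\emph{Switching layer $r\le 2\eps$.} This is the main step. Write $f=\phi\bone+ra+r^2b$ and use \eqref{eq:fermi-Delta} for $g_i^\eps$. Then
\[
(A_\eps f^\eps)_i=\tfrac12\partial_{rr}f_i+\tfrac12\kappa_i^\eps\partial_r f_i+\tfrac12\Delta_{h_i^\eps(r)}f_i+\tfrac12(v_i^\eps)''+\tfrac12\kappa_i^\eps\partial_rv_i^\eps+\tfrac12\Delta_{h_i^\eps(r)}v_i^\eps+q_\eps\bigl(Q(ra+r^2b+v^\eps)\bigr)_i .
\]
The corrector equation \eqref{eq:corrector-ode} removes $\tfrac12(v^\eps)''+q_\eps Qv^\eps+rq_\eps Qa$ exactly; note that $Q\phi\bone=0$. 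What remains consists of four pieces.
\begin{enumerate}[label=(\alph*)]
\item The smooth part $\tfrac12\Delta_{g_i^\eps}f_i$. Its value at $r=0$, computed with the product metric, is $\tfrac12(2b_i(y,0)+\Delta_h\phi)$. This differs from $A_0f|_\Gamma=\tfrac12\Delta_{g_i}f_i|_\Gamma$ only by $\tfrac12\kappa_i a_i$ and by the mismatch $\Delta_{h_i^\eps}-\Delta_h$. Both are bounded, by \Cref{lem:metric-comparison}. In addition, $|A_0f(Z)-A_0f|_\Gamma|\le C\eps$ on the layer, by continuity.
\item The terms $\tfrac12\kappa_i^\eps\partial_rv^\eps$ and $\tfrac12\Delta_{h_i^\eps}v^\eps$. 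These are bounded by $C\norm{\partial_rv^\eps}$ and by $C\sup\norm{v^\eps}_{C^2_y}\to0$, respectively.
\item The quadratic remainder $r^2q_\eps(Qb)_i$.
\end{enumerate}

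We now bound these pieces after integration in time. The bounded pieces in (a) are supported on $r\le2\eps$, so \eqref{eq:occupation-L1} gives a contribution $\le C_T\eps\to0$; this disposes of the curvature terms. For (b), \eqref{eq:occupation-L1} together with \eqref{eq:v-uniform-der1} gives $C_T\sqrt{\eps\eta_\eps}\to0$. For (c), \eqref{eq:occupation-L1} gives $C_T\norm{b}_\infty\int r^2q_\eps=C_T\Xi_\eps\to0$.

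The delicate point is that \eqref{eq:occupation-L1} was derived from the radial process $R^\eps$ with its bounds uniform in $\eps$. We need the bound for $\varphi(r)=r^2q_\eps(r)$, which is a legitimate nonnegative Borel function. For the term $\norm{\partial_rv^\eps(\cdot,r)}$ we take the supremum over $y$ inside $\varphi$, as in \eqref{eq:v-uniform-der1}. I also expect to verify that no term of the form $q_\eps\times O(1)$ survives, since $\Theta_\eps\to\infty$ would make such a term fatal. The cancellation guaranteed by $Q\bone=0$ and by the corrector equation is precisely what must be checked carefully.

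Finally, we combine the three regions. The estimates are uniform in $\tilde x$ because \eqref{eq:occupation-L1} is. This yields \eqref{eq:integrated-generator}.
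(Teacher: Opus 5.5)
Your proposal is correct and follows essentially the same route as the paper: $Q\bone=0$ and the corrector equation remove every term of size $q_\eps$ or $rq_\eps$. The remaining pieces are $r^2q_\eps Qb$, $\tfrac12\kappa_i^\eps\partial_r v^\eps$, $\tfrac12\Delta_{h_i^\eps}v^\eps$ and the metric-regularization error, and you control them exactly as in the paper's Region~I, via \eqref{eq:occupation-L1} together with $\Xi_\eps\to0$, \eqref{eq:v-uniform-der1} and \eqref{eq:Kop-sup}. The differences are cosmetic: you merge the paper's Regions I--II and III--IV, and you bound $\tfrac12\Delta_{g_i^\eps}f_i-A_0f(Z)$ crudely on the whole layer $r\le2\eps$, getting $O(\eps)$, whereas the paper notes this term vanishes for $r\ge2\rho_\eps$ and gets $O(\rho_\eps)$; either bound suffices.
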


\begin{proof}
	We split the collar into four regions.
	
	\smallskip
	\noindent
	\emph{Region I: $0\le r\le\eps$.} On this region \(f^\eps_i=f_i+v_i^\eps\)
	Taylor expansion gives
	\[
	f(y,r)=\phi(y)\bone+r a(y)+r^2b(y,r).
	\]
	Since the \(f_i\)'s are smooth up to the boundary, $b$ and its first two tangential derivatives uniformly bounded.  Therefore
	\[
	q_\eps(r)Qf(y,r)
	=
	rq_\eps(r)Qa(y)
	+
	r^2q_\eps(r)Qb(y,r).
	\]
	Using the corrector equation \eqref{eq:corrector-ode} and the Fermi coordinate decomposition \(\Delta_{g_i^\eps}=\partial_rr+\kappa_i^\eps\partial_r+\Delta_{h_i^\eps(r)}\), on page $i$ we obtain
	\begin{equation*}
		\begin{array}{r c c c c c c c c}
			A_\eps f^\eps-Af
			&=&
			\frac12(\Delta_{g_i^\eps}-\Delta_{g_i})f_i
			&+&
			r^2q_\eps(r)(Qb)_i
			&+&
			\frac12\kappa_i^\eps\partial_rv_i^\eps
			&+&
			\frac12\Delta_{h_i^\eps(r)}v_i^\eps
			\\[1ex]
			& =: &
			S_1^\eps
			&+&
			S_2^\eps
			&+&
			S_3^\eps
			&+&
			S_4^\eps .
		\end{array}
		\label{eq:error-region-I}
	\end{equation*}
	\(S_1^\eps\) is supported in $0\le r<2\rho_\eps$. By \eqref{eq:fermi-uniform-coefficients}, the coefficients
	$\kappa_i^\eps$ and the operators $\Delta_{h_i^\eps(r)}$ are uniformly
	bounded on the fixed collar. Since the metrics \(h_i\) are smooth on \(\Gamma \times [0,r_0]\), \(\kappa_i\) and \(\Delta_{g_i}\) are bounded as well.
	Since $f$ is fixed and smooth,
	\begin{equation*}
		\sup_{\substack{0<\eps<\eps_0,\\1\le i\le k}}\sup_{\substack{y \in \Gamma,\\r<2\rho_\eps}}
		\abs{S_1^\eps}=\sup_{\substack{0<\eps<\eps_0,\\1\le i\le k}}\sup_{\substack{y \in \Gamma,\\r<2\rho_\eps}}\abs{(\kappa_i^\eps-\kappa_i)\partial_rf_i}+\abs{(\Delta_{h_i^\eps(r)}-\Delta_{h_i(r)})f_i}
		\le C.
		\label{eq:metric-error-bounded}
	\end{equation*}
	Thus \eqref{eq:occupation-L1} gives
	\[\sup_{\tilde x}\E_{\tilde x}\int_0^T\abs{S_1^\eps(Y_s^\eps)}\,ds \le C_T \int_0^{r_0/2}\mathbf{1}_{0\le r<2\rho_\eps}\le C_T \rho_\eps\to 0.\]
	For the \(S_2^\eps\), \((Qb)_i\) is bounded and using \eqref{eq:occupation-L1} with \(\phi(r)=r^2q_\eps(r)\mathbf{1}_{r \le \eps}\), we get
	\[
	\sup_{\tilde x}
	\E_{\tilde x}\int_0^T
	R_s^{\eps\,2}q_\eps(R_s^\eps)
	\mathbf1_{\{R_s^\eps\le\eps\}}\,ds
	\le
	C_T\Xi_\eps\to 0.
	\label{eq:taylor-rem-bound}
	\]
	For \(S_3^\eps\), \eqref{eq:fermi-uniform-coefficients} gives \(\kappa_i^\eps\le C\). Therefore, finite-dimensional norm equivalence, \eqref{eq:occupation-L1} and
	\eqref{eq:v-uniform-der1} give
	\begin{align*}
		&\sup_{\tilde x}
		\E_{\tilde x}\int_0^T
		\abs{
			c_{I_s^\eps}^\eps
			\partial_rv_{I_s^\eps}^\eps
			(Y_s^\eps,R_s^\eps)}
		\mathbf1_{\{R_s^\eps\le\eps\}}\,ds\le
		C_T\int_0^\eps
		\sup_{y}\norm{\partial_rv^\eps(y,r)}_\pi\,dr
		\le
		C_T\sqrt{\eps\eta_\eps}
		\to0.
		\label{eq:curvature-error-bound}
	\end{align*}
	Finally for \(S_4^\eps\), by \Cref{lem:corrector-operator,cor:corrector-parameter},
	\[
	\begin{aligned}
		\sup_{y\in\Gamma,r\le2\eps}
		&\left(
		\norm{v^\eps(y,r)}_\pi
		+
		\norm{\nabla_\Gamma v^\eps(y,r)}_\pi
		+
		\norm{\nabla_\Gamma^2v^\eps(y,r)}_\pi
		\right)\\&\quad=\sup_{0<r\le2\eps}\|K(r)\|_{\operatorname{op},\pi}\sup_{y \in \Gamma}
		\left(
		\norm{a(y)}_\pi
		+
		\norm{\nabla_\Gamma a(y)}_\pi
		+
		\norm{\nabla_\Gamma^2a(y)}_\pi
		\right)
		\to0.
	\end{aligned}
	\]
	Together with \eqref{eq:fermi-uniform-coefficients} and finite-dimensional norm equivalnce, this gives \[\sup_{\substack{1\le i \le k, \\y \in \Gamma, 0<r\le \eps}} |S_4^\eps| \le C\sup_{\substack{1\le i \le k, \\ 0<r\le \eps}} \|v_i^\eps(\cdot,r)\|_{C^2(\Gamma,h)}\to 0.\]
	
	\smallskip
	\noindent
	\emph{Region II: $\eps<r\le2\eps$.}
	Here $q_\eps=0$.  Since
	$2\rho_\eps<\eps$, we have $g_i^\eps=g_i$ throughout this region.  Moreover
	$\partial_r v_i^\eps=0$ on $[\eps,2\eps]$, and hence
	\[
	\sup_{\substack{1\le i \le k, y \in \Gamma,\\\eps<r\le 2\eps}} |A_\eps f^\eps-Af|
	=
	\sup_{\substack{1\le i \le k, y \in \Gamma,\\\eps<r\le  2\eps}}\abs{\frac12\Delta_{h_i(r)}v_i^\eps}\le C\sup_{\substack{1\le i \le k, \\\eps<r\le  2\eps}}\|v_i(\cdot,r)\|_{C^2(\Gamma,h)}\to 0.
	\]
	
	\smallskip
	\noindent
	\emph{Region III: $2\eps<r<r_0/2$.}
	In this region, we have
	\(
	A_\eps f^\eps-Af=1/2 \, \Delta_{h_i(r)}	\psi_i^\eps.
	\)
	Moreover, \[\Delta_{g_i}\psi_i^\eps=\zeta''(r)v_i^\eps(y,2\eps)+\kappa_i^\eps(y,r)\zeta'(r)v_i^\eps(y,2\eps)+\zeta(r)\Delta_{h_i(r)}v_i^\eps(\cdot,2\eps)(y).\]
	Consequently
	\begin{equation*}
		\sup_{\substack{1\le i \le k, y \in \Gamma,\\\eps<r\le 2\eps}}
		\abs{A_\eps f^\eps-Af}\le\sup_{\substack{1\le i \le k, y \in \Gamma,\\\eps<r\le 2\eps}}
		\abs{\Delta_{g_i}\psi_i^\eps(y,r)}\le C\sup_{\substack{1\le i \le k, \\\eps<r\le  2\eps}}\|v_i(\cdot,r)\|_{C^2(\Gamma,h)}\to 0.
		\label{eq:outer-corrector}
	\end{equation*}
	
	\smallskip
	\noindent
	\emph{Region IV: outside the support of $\zeta$.}
	There $f^\eps=f$ and, for small $\eps$, $g_i^\eps=g_i$, so the generator error is
	identically zero.
	
	Combining the four regions proves
	\eqref{eq:integrated-generator}.

\end{proof}

We may now prove our main result.

\begin{theorem}
	\label{thm:main}
	Suppose Assumption \ref{ass:main-scaling} holds.  Let
	$\tilde x_\eps\in\widetilde E$ satisfy
	\(
	\mathfrak q(\varpi(\tilde x_\eps))
	\to z\in\cM.
	\)
	Then, for every $T<\infty$,
	\begin{equation}
		Z^\eps
		\Longrightarrow
		Z
		\qquad\text{in }D([0,T],\cM),
		\label{eq:main-convergence}
	\end{equation}
	where $Z$ is the unique solution of the $(A_0,\mathcal C_K,\delta_z)$ martingale problem
	of \Cref{prop:target-mp}. Moreover, \(Z\) has continuous sample paths almost surely.
\end{theorem}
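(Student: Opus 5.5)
The proof follows the standard tightness plus identification plus uniqueness scheme. By \Cref{lem:C-tightness}, the laws $\mathcal L_{\tilde x_\eps}(Z^\eps)$ are $C$-tight in $D([0,T],\cM)$. Fix any subsequence; we extract a further subsequence with $Z^{\eps'}\Longrightarrow Z$, where $Z$ has continuous paths almost surely. Since $Z^{\eps}_0=\mathfrak q(\varpi(\tilde x_\eps))\to z$, we get $Z_0=z$ almost surely. It then suffices to show that every such limit solves the $(A_0,\cC_K,\delta_z)$ martingale problem. Uniqueness from \Cref{prop:target-mp} then identifies the limit independently of the subsequence, which gives \eqref{eq:main-convergence} and continuity of paths. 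Uniqueness on $[0,T]$ follows from the well-posedness on $[0,\infty)$ by restriction.

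For identification, fix $f\in\cC_K$, times $0\le s_1<\dots<s_m\le s<t\le T$, and $h_1,\dots,h_m\in C(\cM)$. The goal is
\[
\E\Big[\Big(f(Z_t)-f(Z_s)-\int_s^t A_0f(Z_u)\,du\Big)\prod_{l}h_l(Z_{s_l})\Big]=0 .
\]
Use the corrected function $f^\eps=f+\psi^\eps$ of \eqref{eq:corrected-test}. By \Cref{lem:corrected-admissible} and the discussion after it, $M^{\eps,f}$ is a true martingale with respect to the filtration of $\widetilde X^\eps$. Since $Z^\eps$ is a function of $\widetilde X^\eps$, we have
\[
\E\Big[\Big(f^\eps(Y_t^\eps)-f^\eps(Y_s^\eps)-\int_s^tA_\eps f^\eps(Y_u^\eps)\,du\Big)\prod_l h_l(Z^\eps_{s_l})\Big]=0 .
\]
Replace $f^\eps(Y^\eps)$ by $f(Z^\eps)$. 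This is legitimate because $f$ is continuous on $\cM$, so $f(Y^\eps)=f(Z^\eps)$, and the error is at most $2\|f^\eps-f\|_\infty\prod\|h_l\|_\infty\to0$ by \eqref{eq:test-uniform}. Replace $A_\eps f^\eps(Y^\eps_u)$ by $A_0f(Z^\eps_u)$. By \Cref{prop:generator-convergence}, the resulting error is bounded by a quantity that tends to zero uniformly over starting points, in particular along $\tilde x_\eps$.

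Now pass to the limit. The functional
\[
\omega\mapsto \Big(f(\omega_t)-f(\omega_s)-\int_s^tA_0f(\omega_u)du\Big)\prod_l h_l(\omega_{s_l})
\]
is bounded because $f$ and $A_0f\in C(\cM)$. It is continuous at every $\omega\in C([0,T],\cM)$ in the $J_1$ topology: evaluation maps are continuous at continuous paths, and the time integral is $J_1$-continuous. Since $Z$ is concentrated on continuous paths, the continuous mapping theorem gives the displayed identity for $Z$. This holds first for $s,t,s_l$ outside the at most countable set of fixed discontinuity times, and then for all times because $Z$ has continuous paths. Hence $f(Z_t)-f(Z_0)-\int_0^tA_0f(Z_u)du$ is a martingale with respect to the natural filtration of $Z$.

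The substantive analytic step is already done in \Cref{prop:generator-convergence}. The main care here is the initial condition, since the microscopic labels are arbitrary and $z$ may lie on $\Gamma$. This causes no difficulty: all estimates are uniform in $\tilde x$, and the identification uses only the quotient observation $Z^\eps$. The second point is to work with the enlarged filtration of $\widetilde X^\eps$ while testing only against functionals of $Z^\eps$; this is harmless because the product $\prod_l h_l(Z^\eps_{s_l})$ is measurable with respect to the larger filtration at time $s$.
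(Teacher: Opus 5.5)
Your proposal is correct and follows essentially the same route as the paper: $C$-tightness from \Cref{lem:C-tightness}, identification of every subsequential limit through the corrected test functions $f^\eps$, \Cref{prop:generator-convergence} and the continuous mapping theorem at continuous paths, then uniqueness from \Cref{prop:target-mp}; testing against cylinder functionals $\prod_l h_l(Z_{s_l})$ instead of general bounded continuous $G(Z_{\cdot\wedge s})$ is only a cosmetic difference. The one phrase to tighten is your claim that uniqueness on $[0,T]$ follows from well-posedness on $[0,\infty)$ ``by restriction'': restriction alone does not give it. You need either to extend a $[0,T]$-solution by concatenating with the Feller family after time $T$, or to run the same argument in $D([0,\infty),\cM)$ and restrict afterwards; the paper also leaves this point implicit.
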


\begin{proof}
	By \Cref{lem:C-tightness}, the laws of $Z^\eps$ are C-tight.  Consider any convergent subsequence and denote its limit by $\overline Z$.  The initial condition is $z$ by the assumed convergence of starting points.
	
	Fix $f\in\cC_K$ and the corrected functions $f^\eps$ from
	\eqref{eq:corrected-test}.  By \eqref{eq:test-uniform},
	\(
	\sup_{t\le T}
	\abs{f^\eps(Y_t^\eps)-f(Z_t^\eps)}
	\to0
	\)
	deterministically.  By \Cref{prop:generator-convergence},
	\[
	\int_0^tA_\eps f^\eps(Y_s^\eps)\,ds
	-
	\int_0^tA_0f(Z_s^\eps)\,ds
	\to0
	\]
	in $L^1$, uniformly for $t\le T$.  The microscopic processes
	\(
	M_t^{\eps,f}
	\)
	are martingales, and by \Cref{prop:generator-convergence} they are uniformly square integrable.
	
	Let $0\le s<t\le T$ and let
	$G$ be any bounded continuous functional of the path up to time $s$.  Since
	$M^{\eps,f}$ is a martingale and $G(Z^\eps_{\cdot\wedge s})$ is measurable with
	respect to the microscopic filtration at time $s$,
	\begin{equation}
		\E_{\tilde{x}_\eps}\left[
		G(Z^\eps_{\cdot\wedge s})
		\bigl(M_t^{\eps,f}-M_s^{\eps,f}\bigr)
		\right]=0.
		\label{eq:prelimit-martingale-identity}
	\end{equation}
	Defining \(H_{s,t}^\eps:=f(Z_t^\eps)-f(Z_s^\eps)-\int_s^t A_0 f(Z_u^\eps)du\), equations \eqref{eq:test-uniform} and \eqref{eq:integrated-generator} give
	\[\E_{\tilde{x}_\eps}\abs{M_t^{\eps,f}-M_s^{\eps,f}-H_{s,t}^\eps}\le2\|f^\eps-f\|_\infty+\E_{\tilde{x}_\eps}\int_0^T\abs{A_\eps f^\eps(Y_u^\eps)-A_0f(Z_u^\eps)}\,du \to 0,\]
	which in turn implies that
	\(
	\E_{\tilde x_\eps}[G(Z^\eps_{\cdot\wedge s})H_{s,t}^\eps]\to0.
	\)
	Because $f\in\cC_K$, both $f$ and $A_0f$ are continuous and bounded on the
	compact quotient $\cM$.  Since the limiting law is supported on \(C([0,T],M)\) and \(J_1\)-convergence to a continuous path implies uniform convergence,
	the map
	\[
	\omega\longmapsto
	G(\omega_{\cdot\wedge s})
	\left(
	f(\omega(t))-f(\omega(s))
	-\int_s^t A_0f(\omega(u))\,du
	\right)
	\]
	is continuous at every limiting path in the Skorokhod $J_1$ topology. Moreover, it is bounded, since \(G\), \(f\), and \(A_0f\)
	are bounded. Hence, by the continuous mapping theorem,
	\[
	G(Z^\eps_{\cdot\wedge s})H_{s,t}^\eps
	\Rightarrow
	G(Z_{\cdot\wedge s})
	\left(
	f(Z_t)-f(Z_s)-\int_s^t A_0f(Z_u)\,du
	\right).
	\]
	Boundedness upgrades convergence in distribution to convergence in \(L^1\). Therefore
	\[
	\E_z\left[
	G(\overline Z_{\cdot\wedge s})
	\left(
	f(\overline Z_t)-f(\overline Z_s)
	-
	\int_s^tAf(\overline Z_u)\,du
	\right)
	\right]=0.
	\]
	Thus $\overline Z$ solves the martingale problem for $(A_0,\cC_K)$ started
	from $z$.  By Ethier--Kurtz, Theorem~4.4.1, as verified in
	\Cref{prop:target-mp}, that martingale problem is well posed.  Therefore every subsequential limit has the same law, namely the law of the Kirchhoff diffusion $Z$ started at $z$. By C-tightness such law is
	supported on $C([0,T],\cM)$ for every $T$.
\end{proof}
%

\begin{remark}
	When \(z\in\Gamma\), no convergence of the microscopic
	page labels is required: any sequence of starting points whose glued
	images converge to \(z\) has the same limiting law.
\end{remark}

\begin{remark}[Curvature universality]
	The limiting interface law is independent of the second fundamental
	forms of the pages. Indeed, the first normal variation of the Fermi
	volume density enters the microscopic generator through
	\(
	\kappa_i^\eps(y,r)\)
	whose boundary value is the mean-curvature coefficient up to the
	chosen normal convention. The corresponding corrector contribution
	\(\kappa_i^\eps\partial_r v_i^\eps\) vanishes in the limit by the
	estimate established in Proposition~\ref{prop:generator-convergence}.
	Thus curvature remains only through the bulk operators
	\(\Delta_{g_i}\) and produces no additional leading-order interface
	term.
\end{remark}

\appendix
\section{Elliptic bootstrap in the proof of \ref{lem:weak-smooth}}
\label{app:bootstrap}

Here we discuss the bootstrap mechanism that completes the proof of Lemma \ref{lem:weak-smooth}.

Fix an integer $m\ge1$ and a smaller half-cylinder
\(
Q_2:=U_2\times(0,r_2),\) with \(
U_2\Subset U_0\) and \( 0<r_2<r_1/2.
\)
Since $U_2\Subset U_1$, choose
$U_2\Subset U_\ast\Subset U_1$ and $h_m>0$ so small that every cumulative
tangential translation arising from at most $m$ difference quotients with
$|h_j|<h_m$ sends $U_\ast$ into $U_1$.  Choose a cutoff \(\eta \in C_c^\infty(U_\ast \times [0,r_1/4))\) equal to one on $U_2\times [0,r_2]$. We prove by induction on $m$ that, for every tangential
multi-index $\beta$ with $|\beta|\le m$,
\(
D_y^\beta u_i\in H^1(Q_2)
\)
and
\begin{equation}
	\sum_i\sum_{|\beta|\le m}
	\|D_y^\beta u_i\|_{H^1(Q')}
	\le
	C_{m,Q'}\left(
	\sum_i\|u_i\|_{H^1(U_1\times(0,r_1))}
	+
	\sum_i\|G_i\|_{H^m(U_1\times(0,r_1))}
	\right).
	\label{eq:all-tangential-H1}
\end{equation}
Here and below $D_y^\beta$ denotes an actual weak tangential derivative;
difference quotients are used only to obtain these derivatives.

The case $m=1$ is precisely the difference-quotient argument above.  Assume
now that the assertion has been proved through tangential order $m-1$.	
Applying discrete integration by parts successively and the discrete Leibniz rule gives a higher dimensional version of \eqref{eq:DQ-identity}. In particular, the product rule separates, in each differentiated principal term,
a unique top-order contribution from lower-order coefficient commutators.
Schematically,
\begin{equation}
	\Delta_{\mathbf h}
	\bigl(B_i\nabla u_i\bigr)
	=
	(\tau_{\mathbf h}B_i)\,
	\nabla\Delta_{\mathbf h}u_i
	+
	\mathcal R_{B,i}^{(m)},
	\label{eq:higher-DQ-Leibniz}
\end{equation}
where $\tau_{\mathbf h}B_i$ denotes a translate of $B_i$, and
$\mathcal R_{B,i}^{(m)}$ is a finite sum of terms of the form
\(
(\tau D^{\gamma}_{\mathbf h}B_i)\,
\nabla D^{\delta}_{\mathbf h}u_i,\) with\(
|\gamma|\ge1\) and \(|\delta|\le m-1.
\)
Analogous expansions hold for $J_i u_i$ and $J_iG_i$.

Because $B_i$ is smooth, all iterated difference quotients of its coefficients
of order at most $m$ are uniformly bounded for sufficiently small
$\mathbf h$; the same is true for $J_i$.  Moreover, every derivative of
$u_i$ occurring in the remainder
$\mathcal R_{B,i}^{(m)}$ has tangential order at most $m-1$ and is therefore
controlled by the induction hypothesis.  The forcing terms are controlled by
the standard difference-quotient estimates applied to $J_iG_i$, and hence by
$\|G_i\|_{H^m}$.  On the other hand, the first term on the right-hand side of
\eqref{eq:higher-DQ-Leibniz} contains the new highest-order quantity
$\nabla\Delta_{\mathbf h}u_i$.  Since translations preserve the uniform
ellipticity bounds for $B_i$, it produces the same coercive term as in the
first-order argument, that is
\(
c\sum_i
\|\eta\nabla\Delta_{\mathbf h}u_i\|_{L^2}^2.
\)
Estimating the cutoff and commutator terms by Cauchy--Schwarz and Young's
inequality, and absorbing the resulting small multiples of the coercive term,
therefore gives
\[
\sum_i
\|\Delta_{\mathbf h}u_i\|_{H^1(Q_2)}^2
\le
C_{m,Q_2}\left(
\sum_i\|u_i\|_{H^1(Q_1)}^2
+
\sum_i\|G_i\|_{H^m(Q_1)}^2
\right),
\]
uniformly as \(h_1,\ldots,h_m\) tend to zero.  Letting
$h_1,\ldots,h_m$ tend successively to zero and using weak compactness in
$H^1(Q_2)$, exactly as in the first-order argument, yields
\(
D_y^\beta u_i\in H^1(Q_2)\) for every tangential \(\beta,\, |\beta|=m.
\)
The continuity of the trace map
$H^1(Q_2)\to H^{1/2}(U_2)$ also shows that the traces
$\Tr D_y^\beta u_i$ remain common to all pages.  This closes the tangential
induction and proves \eqref{eq:all-tangential-H1}.

Thus derivatives with arbitrarily high tangential order and with at most one
normal derivative are locally in $L^2$ up to the binding.  It remains to
recover derivatives containing two or more normal indices.  We use
\eqref{eq:recover-rr},
recursively.

More precisely, we prove by induction on $\ell\ge1$ that
\(
\partial_r^jD_y^\beta u_i\in L^2_{\mathrm{loc}}\) for every \(0\le j\le\ell
\)
and every tangential multi-index $\beta$.  The case $\ell=1$ follows from
\eqref{eq:all-tangential-H1}, while \eqref{eq:recover-rr} gives the case
$j=2$.  Suppose now that all derivatives with at most $\ell$ normal
derivatives have already been obtained.  Apply
$\partial_r^{\ell-1}D_y^\beta$ to \eqref{eq:recover-rr} in the sense of
distributions.  The left-hand side is
\(
\partial_r^{\ell+1}D_y^\beta u_i.
\)
Every term on the right contains at most $\ell$ normal derivatives of $u_i$ and
is controlled by the tangential estimate and the induction hypothesis.  The same argument applies to the remaining lower-order terms.
Since the coefficients are smooth and bounded together with all derivatives
on compact subcylinders of the collar, and since $G_i$ is smooth, the entire
right-hand side belongs to $L^2_{\mathrm{loc}}$ closing the induction on the number of normal derivatives.

It follows that
\(
u_i\in H^s_{\mathrm{loc}}
(U_0\times[0,r_1/2))\) for every \(s\).
A finite covering of $\Gamma$, together with ordinary interior elliptic
regularity away from the binding, then yields
\(
u_i\in H^s(M_i)\) for every \(s
\),
and therefore
\(
u_i\in C^\infty(M_i)
\)
by Sobolev embedding.

\section{Localization in the proof of \Cref{lem:C-tightness}}

Choose finitely many coordinate neighborhoods
\[
\mathcal V_\ell\Subset \mathcal U_\ell,
\qquad \ell=1,\ldots,N,
\]
covering \(M\), where each \(\mathcal U_\ell\) is either an interface
neighborhood of the form considered above or an interior chart whose
closure is disjoint from \(\Gamma\).  The neighborhoods may be chosen
with a uniform positive buffer: there exists \(d_0>0\) such that, for
every \(x\in M\), one can find \(\ell\) with
\[
x\in\mathcal V_\ell,
\qquad
d_M(x,M\setminus\mathcal U_\ell)\ge d_0.
\]
On the compact closure of each chart, the coordinate distance is
uniformly equivalent to \(d_M\).  Hence the stopped coordinate estimates
proved above imply that, for every \(\gamma>0\),
\[
\lim_{\delta\downarrow0}
\sup_{0<\varepsilon\le\varepsilon_0}
\sup_{\tilde x\in\widetilde E}
\sup_{\tau\le T}
\mathbb P_{\tilde x}
\left(
d_M\!\left(
\bar Z^\varepsilon_{(\tau+\delta)\wedge
	\sigma^\varepsilon_\ell\wedge T},
\bar Z^\varepsilon_\tau
\right)>\gamma
\right)
=0,
\]
whenever \(\bar Z^\varepsilon_\tau\in\mathcal V_\ell\), where
\(\sigma^\varepsilon_\ell\) denotes the first exit time after \(\tau\)
from \(\mathcal U_\ell\).  Since leaving \(\mathcal U_\ell\) from a
point of \(\mathcal V_\ell\) requires a displacement of at least
\(d_0\), the same estimate with \(\gamma=d_0\) shows that
\[
\lim_{\delta\downarrow0}
\sup_{0<\varepsilon\le\varepsilon_0}
\sup_{\tilde x\in\widetilde E}
\sup_{\tau\le T}
\mathbb P_{\tilde x}
\bigl(
\sigma^\varepsilon_\ell\le(\tau+\delta)\wedge T
\bigr)
=0 .
\]
Using the finite cover and partitioning according to a chart
\(\mathcal V_\ell\) containing \(\bar Z^\varepsilon_\tau\), we therefore
obtain the global Aldous condition
\[
\lim_{\delta\downarrow0}
\sup_{0<\varepsilon\le\varepsilon_0}
\sup_{\tilde x\in\widetilde E}
\sup_{\tau\le T}
\mathbb P_{\tilde x}
\left(
d_M\!\left(
\bar Z^\varepsilon_{(\tau+\delta)\wedge T},
\bar Z^\varepsilon_\tau
\right)>\gamma
\right)
=0
\]
for every \(\gamma>0\).

\bibliographystyle{plain}
\bibliography{biblio_clean}

@incollection{Walsh1978,
	author    = {Walsh, John B.},
	title     = {A diffusion with a discontinuous local time},
	booktitle = {Temps locaux},
	series    = {Ast\'erisque},
	number    = {52--53},
	pages     = {37--45},
	publisher = {Soci\'et\'e Math\'ematique de France},
	year      = {1978},
	url       = {https://www.numdam.org/item/AST_1978__52-53__37_0/}
}

@incollection{BarlowPitmanYor1989,
	author    = {Barlow, Martin T. and Pitman, Jim and Yor, Marc},
	title     = {On {Walsh's} Brownian motions},
	booktitle = {S\'eminaire de Probabilit\'es XXIII},
	series    = {Lecture Notes in Mathematics},
	volume    = {1372},
	pages     = {275--293},
	publisher = {Springer},
	address   = {Berlin},
	year      = {1989},
	doi       = {10.1007/BFb0083979},
	url       = {https://www.numdam.org/item/SPS_1989__23__275_0/}
}

@article{FreidlinWentzell1993,
	author  = {Freidlin, Mark I. and Wentzell, Alexander D.},
	title   = {Diffusion processes on graphs and the averaging principle},
	journal = {Annals of Probability},
	volume  = {21},
	number  = {4},
	pages   = {2215--2245},
	year    = {1993},
	doi     = {10.1214/aop/1176989018}
}

@article{FreidlinSheu2000,
	author  = {Freidlin, Mark and Sheu, Shuenn-Jyi},
	title   = {Diffusion processes on graphs: stochastic differential equations, large deviation principle},
	journal = {Probability Theory and Related Fields},
	volume  = {116},
	number  = {2},
	pages   = {181--220},
	year    = {2000},
	doi     = {10.1007/PL00008726}
}

@article{KostrykinPotthoffSchrader2012,
	author  = {Kostrykin, Vadim and Potthoff, J\"urgen and Schrader, Robert},
	title   = {Brownian motions on metric graphs},
	journal = {Journal of Mathematical Physics},
	volume  = {53},
	number  = {9},
	pages   = {095206},
	year    = {2012},
	doi     = {10.1063/1.4714661}
}

@article{BrinKifer2001,
	author  = {Brin, Michael and Kifer, Yuri},
	title   = {Brownian motion, harmonic functions and hyperbolicity for {Euclidean} complexes},
	journal = {Mathematische Zeitschrift},
	volume  = {237},
	number  = {3},
	pages   = {421--468},
	year    = {2001},
	doi     = {10.1007/PL00004875}
}

@article{Bouziane2005,
	author  = {Bouziane, Taoufik},
	title   = {Brownian motion in {Riemannian} admissible complexes},
	journal = {Illinois Journal of Mathematics},
	volume  = {49},
	number  = {2},
	pages   = {559--580},
	year    = {2005},
	doi     = {10.1215/ijm/1258138035}
}

@article{NyeWhite2014,
	author  = {Nye, T. M. W. and White, M. C.},
	title   = {Diffusion on some simple stratified spaces},
	journal = {Journal of Mathematical Imaging and Vision},
	volume  = {50},
	number  = {1--2},
	pages   = {115--125},
	year    = {2014},
	doi     = {10.1007/s10851-013-0457-0}
}

@article{AlbeverioKusuoka2012,
	author  = {Albeverio, Sergio and Kusuoka, Seiichiro},
	title   = {Diffusion processes in thin tubes and their limits on graphs},
	journal = {Annals of Probability},
	volume  = {40},
	number  = {5},
	pages   = {2131--2167},
	year    = {2012},
	doi     = {10.1214/11-AOP667}
}

@article{HairerManson2010,
	author  = {Hairer, Martin and Manson, Charles},
	title   = {Periodic homogenization with an interface: the one-dimensional case},
	journal = {Stochastic Processes and their Applications},
	volume  = {120},
	number  = {8},
	pages   = {1589--1605},
	year    = {2010},
	doi     = {10.1016/j.spa.2010.03.016}
}

@article{HairerManson2011,
	author  = {Hairer, Martin and Manson, Charles},
	title   = {Periodic homogenization with an interface: the multi-dimensional case},
	journal = {Annals of Probability},
	volume  = {39},
	number  = {2},
	pages   = {648--682},
	year    = {2011},
	doi     = {10.1214/10-AOP564}
}

@book{ethier2009markov,
  author    = {Ethier, Stewart N. and Kurtz, Thomas G.},
  title     = {Markov Processes: Characterization and Convergence},
  series    = {Wiley Series in Probability and Mathematical Statistics},
  year      = {1986},
  publisher = {Wiley},
  address   = {New York},
  doi       = {10.1002/9780470316658}
}

@article{FreidlinWentzell2004,
  author  = {Freidlin, Mark I. and Wentzell, Alexander D.},
  title   = {Diffusion processes on an open book and the averaging principle},
  journal = {Stochastic Process. Appl.},
  volume  = {113},
  number  = {1},
  pages   = {101--126},
  year    = {2004},
  doi     = {10.1016/j.spa.2004.03.009}
}

@book{FreidlinWentzell2012,
  author    = {Freidlin, Mark I. and Wentzell, Alexander D.},
  title     = {Random Perturbations of Dynamical Systems},
  edition   = {3},
  series    = {Grundlehren der mathematischen Wissenschaften},
  volume    = {260},
  year      = {2012},
  publisher = {Springer},
  address   = {Heidelberg},
  doi       = {10.1007/978-3-642-25847-3}
}

@article{PivarskiSaloffCoste2008,
  author  = {Pivarski, Melanie and Saloff-Coste, Laurent},
  title   = {Small time heat kernel behavior on {Riemannian} complexes},
  journal = {New York J. Math.},
  volume  = {14},
  pages   = {459--494},
  year    = {2008}
}

@article{BendikovSaloffCosteSalvatoriWoess2011,
  author  = {Bendikov, Alexander and Saloff-Coste, Laurent and Salvatori, Maura and Woess, Wolfgang},
  title   = {The heat semigroup and {Brownian} motion on strip complexes},
  journal = {Adv. Math.},
  volume  = {226},
  number  = {1},
  pages   = {992--1055},
  year    = {2011},
  doi     = {10.1016/j.aim.2010.07.014}
}

@article{AkdumanKuchment2024,
	author        = {Akduman, Setenay and Kuchment, Peter},
	title         = {On open book analogs of quantum graphs},
	journal       = {arXiv preprint arXiv:2406.16190},
	year          = {2024},
	eprint        = {2406.16190},
	archivePrefix = {arXiv},
	primaryClass  = {math-ph}
}

@article{Corbin2020,
	author  = {Corbin, James E.},
	title   = {Convergence of spectra of uniformly fattened open book structures},
	journal = {Journal of Mathematical Analysis and Applications},
	volume  = {489},
	number  = {2},
	pages   = {124181},
	year    = {2020},
	doi     = {10.1016/j.jmaa.2020.124181}
}

@article{ExnerPost2005,
	author  = {Exner, Pavel and Post, Olaf},
	title   = {Convergence of spectra of graph-like thin manifolds},
	journal = {Journal of Geometry and Physics},
	volume  = {54},
	number  = {1},
	pages   = {77--115},
	year    = {2005},
	doi     = {10.1016/j.geomphys.2004.08.005}
}

\end{document}